\numberwithin{equation}{section}
\newcommand{\bigD}{\mathcal{D}}
\newcommand{\PRes}{\mathcal{Q}}
\newtheorem{theorem}{Theorem}[section]
\newtheorem{lemma}[theorem]{Lemma}
\newtheorem{proposition}[theorem]{Proposition}
\newtheorem{prob}[theorem]{Problem}
\theoremstyle{definition}
\newtheorem{remark}[theorem]{{\bf Remark}}
\newtheorem{definition}[theorem]{Definition}
\newcommand{\cc}{\mathbb{C}}
\newcommand{\hh}{\mathbb{H}}
\newcommand{\pp}{\partial}
\renewcommand{\Re}{\mathrm{Re}}
\crefname{enumi}{}{}
\crefname{enumii}{}{}
\title[Properties of a polyanalytic functional \\ calculus on the $S$-spectrum]{Properties of a polyanalytic functional \\ calculus on the $S$-spectrum}
\author{Antonino De Martino, Stefano Pinton}
\date{}
\begin{document}
	
	\maketitle
	
\begin{abstract}
The Fueter mapping theorem gives a constructive way to extend holomorphic functions of one complex variable to monogenic functions, i.e., null solutions of the generalized Cauchy-Riemann operator in $\mathbb{R}^4$, denoted by $\mathcal{D}$. This theorem is divided in two steps. In the first step a holomorphic function is extended to a slice hyperholomorphic function. The Cauchy formula for these type of functions is the starting point of the $S$-functional calculus.
In the second step a monogenic function is obtained by applying the Laplace operator in four real variables, namely $ \Delta$, to a slice hyperholomorphic function. The polyanalytic functional calculus, that we study in this paper, is based on the factorization of $\Delta= \mathcal{D} \mathcal{\overline{D}}$. Instead of applying directly the Laplace operator to a slice hyperholomorphic function we apply first the operator $ \mathcal{\overline{D}}$ and we get a polyanalytic function of order 2, i..e, a function that belongs to the kernel of $ \mathcal{D}^2$. We can represent this type of functions in an integral form and then we can define the polyanalytic functional calculus on $S$-spectrum. The main goal of this paper is to show the principal properties of this functional calculus. In particular, we study a resolvent equation suitable for proving a product rule and generate the Riesz projectors.

\end{abstract}

\medskip
\noindent AMS Classification  47A10, 47A60.

\noindent Keywords: polyanalytic functions, resolvent equation, Riesz projectors, $F$-functional calculus, $Q$-functional calculus,  $S$-spectrum, $P_2$-functional calculus.

\section{Introduction}
The polyanalytic functional calculus of order 2 on the $S$-spectrum was introduced in \cite{DP}. Similarly as the harmonic functional calculus (see \cite{CDPS}), it can be seen as an intermediate functional calculus between the $S$-functional calculus and the $F$-functional calculus (see \cite{CGKBOOK}).

To better understand the nature of these functional calculi we recall that the spaces of functions over which they are defined, are strictly related to the Fueter-Sce mapping theorem. This theorem is based on a two steps procedure: the first step extends holomorphic functions of one complex variable to slice hyperholomorphic functions, the second one gives a monogenic function by applying the Laplace operator in four real variables to a slice hyperholomorphic function (see \cite{F}). The Fueter construction can be visualized as follows 
\begin{equation}
\label{diag1}
\begin{CD}
	\textcolor{black}{\mathcal{O}(D)}  @>T_{F1}>> \textcolor{black}{SH(\Omega_D)}  @>\ \   T_{F2}=\Delta >>\textcolor{black}{AM(\Omega_D)},
\end{CD}
\end{equation}

where $\mathcal O(D)$ is the set of holomorphic functions on $D$, $SH(\Omega_D)$ is the set of slice hyperholomorphic functions induced on $\Omega_D$, $AM(\Omega_D)$ is the set of axially monogenic functions on $\Omega_D$, $T_{F_1}$ denotes the first linear operator of the Fueter construction, called slice operator. Associated to the spaces of functions $SH(\Omega_D)$ and $AM(\Omega_D)$ there are two functional calculi. The first one is the $S$-functional calculus and it is based on the Cauchy integral formula for slice hyperholomorphic functions (see Theorem \ref{Cauchy}). By applying the Laplace operator to the slice hyperholomorphic Cauchy formula we obtain the integral representation of an axially monogenic function. This is the so-called Fueter theorem in integral form (see Theorem \ref{Fueter}). The  $F$-functional calculus  is based on this theorem.
The following diagram illustrates these relations
\begin{equation*}
	\begin{CD}
		{SH(U)} @.  {AM(U)} \\   @V  VV
		@.
		\\
		{{\rm  Slice\ Cauchy \ Formula}}  @> T_{F2}=\Delta>> {{\rm Fueter\ theorem \ in \  integral\  form}}
		\\
		@V VV    @V VV
		\\
		{S-{\rm Functional \ calculus}} @. {F-{\rm Functional \ calculus}}.
	\end{CD}
\end{equation*}
To proceed further we fix the notations. We define the quaternions as follows

$$
\mathbb{H}=\lbrace{q=q_0+q_1e_1+q_2e_2+q_3e_3 \ \ |\   \ q_0,q_1,q_2,q_3\in\mathbb{R}}\rbrace,
$$
where the imaginary units satisfy the relations
$$e_1^2=e_2^2=e_3^2=-1\quad \text{and}\quad e_1e_2=-e_2e_1=e_3,\ \  e_2e_3=-e_3e_2=e_1, \ \  e_3e_1=-e_1e_3=e_2.$$
Given $q\in\mathbb H$
we call $ \Re(q):=q_0$ the real part of $q$ and
$ \underline{q}\,=q_1e_1+q_2e_2+q_3e_3$  the imaginary part.
The modulus of $q\in \mathbb{H}$ is given by
$|q|=\sqrt{q_0^2+q_1^2+q_2^2+q_3^2},$ the conjugate of $q$ is defined by
$\overline{q}=q_0- \underline{q}$ and we have $|q|=\sqrt{q\overline{q}}$.
\\We recall that the Fueter operator $\mathcal{D}$ and its conjugate $\overline{\mathcal{D}}$ are defined as follows
$$
\mathcal{D}:= \partial_{q_0}+ \sum_{i=1}^{3} e_i \partial_{q_i}
\ \ \ \
{\rm and}\ \ \ \
\overline{\mathcal{D}}:= \partial_{q_0}- \sum_{i=1}^{3} e_i \partial_{q_i}.
$$

The Laplace operator $\Delta$ can be factorized by the Fueter operator $\mathcal D$ and its conjugate $\overline{\mathcal D}$: $T_{F_2}=\Delta=\mathcal D\overline{\mathcal D}=\overline{\mathcal D}\mathcal D$. As a consequence of the Fueter mapping theorem when we apply $\mathcal D$ to a slice hyperholomorphic function we obtain an axially harmonic function, whereas, when we apply $\overline{\mathcal D}$ to a slice hyperholomorphic function we obtain an axially polyanalytic function of order 2. Thus we have the following diagrams  
\begin{equation}
\label{diagg}
	\begin{CD}
		\textcolor{black}{\mathcal{O}(D)}  @>T_{F1}>> \textcolor{black}{SH(\Omega_D)}  @>\ \   \mathcal{D}>>\textcolor{black}{AH(\Omega_D)}
		@>\ \   \overline{\mathcal{D}} >>\textcolor{black}{AM(\Omega_D)},
	\end{CD}
\end{equation}

\begin{equation}
\label{diag3}
	\begin{CD}
		\textcolor{black}{\mathcal{O}(D)}  @>T_{F1}>> \textcolor{black}{SH(\Omega_D)}  @>\ \   \overline{\mathcal{D}}>>\textcolor{black}{AP_2(\Omega_D)}
		@>\ \   \mathcal{D} >>\textcolor{black}{AM(\Omega_D)}.
	\end{CD}
\end{equation}
where $AH(\Omega_D)$ is the set of axially harmonic functions and $AP_2(\Omega_D)$ is the set of axially polyanalytic functions of order 2. Basically, the diagrams \eqref{diagg} and \eqref{diag3} are factorizations of the diagram \eqref{diag1}. These diagrams lead to the definition of \emph{fine structure}.
\begin{definition}
We will call {\em fine structure of the  spectral theory on the $S$-spectrum}
the set of functions spaces and the associated functional calculi
induced by a factorization of the operator $T_{F2}$, in the Fueter extension theorem.
\end{definition}

In the Clifford algebra setting the diagrams \eqref{diagg} and \eqref{diag3} are much more involved, see \cite{FIVEDIM}. This is due to fact that the map $T_{F2}$ becomes the so-called Fueter-Sce map $T_{FS2}= \Delta^{\frac{n-1}{2}}_{n+1}$, where $n$ is an odd number, see \cite{CSSS}.

\medskip

In literature the theory of polyanalytic functions plays an important role for studying some elasticity problems, see \cite{K,M}, and investigating some aspects of the time frequency analysis, see for instance \cite{A,A1, DMD2}. Recently the polyanalytic functions has been studied in the slice hyperholomorphic setting, see \cite{polyan1, polyan2}, and also a Fueter theorem has been considered in \cite{ADS}. Moreover a slice polyanalytic functional calculus has been considered in \cite{ACDS2}. For further information about the polyanalytic function theory see \cite{B1}, for more applications about this theory see \cite{AF}. 

\medskip

As for the axially monogenic functions, it is possible to obtain an integral representation of the axially harmonic functions and axially polyanalytic functions of order 2. Based on these integral representations we define the corresponding functional calculi: the harmonic functional calculus ($Q$-functional calculus) and the polyanalytic functional calculus  of order 2 ($P_2$-functional calculus). It is possible to visualize these relations with the following diagrams 
\newline
\newline
{\small
	\begin{equation*}
		\begin{CD}
			{SH(U)} @. {AH(U)}  @.  {AM(U)} \\   @V  VV
			@.
			\\
			{{\rm  Slice\ Cauchy \ Formula}}  @> \mathcal{D} >> {AH {\rm \ in \  integral\  form}}@> \overline{\mathcal{D}} >> {{\rm Fueter\ theorem \ in \  integral\  form}}
			\\
			@V VV    @V VV  @V VV
			\\
			S-{{\rm functional \ calculus}} @. {Q-{\rm functional \ calculus}}@. F-{{\rm functional \ calculus}}
		\end{CD}
	\end{equation*}
}
\newline
\newline
and
\newline
\newline
{\small
	\begin{equation}
		\begin{CD}
			{SH(U)} @. {AP_2(U)}  @.  {AM(U)} \\   @V  VV
			@.
			\\
			{{\rm  Slice\ Cauchy \ Formula}}  @>\overline{ \mathcal{D}} >> {AP_2 {\rm \ in \  integral\  form}}@> \mathcal{D} >> {{\rm Fueter\ theorem \ in \  integral\  form}}
			\\
			@V VV    @V VV  @V VV
			\\
			S-{{\rm functional \ calculus}} @. P_2-{{\rm functional \ calculus}}@. F-{{\rm functional \ calculus}}
		\end{CD}
	\end{equation}
}
\newline
\newline
Once we have proved the good definitions of these functional calculi, a natural field of investigation is the determination of their main properties which are: the algebraic properties, the resolvent equation, the Riesz projectors, the product rule. For further information about the $S$-functional calculus see \cite{CGKBOOK, CSS3}, whereas for the $F$-functional calculus see \cite{CDS, CGKBOOK, CS}.
The properties of the $Q$-functional calculus are studied in \cite{CDPS}. 

\medskip

The goal of this paper is to investigate the properties of the $P_2$-functional calculus.

\medskip

\emph{Outline of the paper:} The paper consists of six sections, the first one being this introduction. In Section 2 we give some basic notions of the $S$-functional calculus, the $F$-functional calculus, the $Q$-functional calculus and  the underlying theory of the slice hyperholomoprhic functions. In Section 3 we recall the definition of the $P_2$-functional calculus and we prove some algebraic properties. In Section 4 we show a resolvent equation for the $P_2$-functional calculus, by this fundamental tool, we prove the product rule. In this section it is also proved a product rule for the $F$-functional calculus based on the $P_2$-functional calculus and the $Q$-functional calculus. In Section 5 we study the Riesz projectors for the $P_2$-functional calculus. Finally, in Section 6 we prove a different version of the product rule for the $Q$-functional calculus, based on a new resolvent equation.  
\section{Preliminaries results on functions and operators}

In this section we recall some basic notions about the slice hyperholomorphic functions. Moreover, we give the definitions and the properties of all the functional calculi that we need: $S$-functional calculus, $F$-functional calculus and $Q$-functional calculus. 	
\subsection{Hyperholomorphic functions}
Before to introduce the definition of slice hyperholomorphic function, we need some preliminary notations.

Let us denote by $ \mathbb{S}$ the unit sphere of purely imaginary quaternions
$$ \mathbb{S}= \{\underline{q}=q_1e_1+q_2e_2+q_3e_3\ \ | \ \  \, q_1^2+q_2^2+q_3^2=1\} .$$
Notice that if $J \in \mathbb{S}$, then $J^2=-1$. Therefore $J$ is an imaginary unit, and we denote by
$$ \mathbb{C}_J=\{u+Jv\ \ \ | \ \ u,v \in \mathbb{R}\},$$
an isomorphic copy of the complex numbers. Given a non-real quaternion $q= q_0+ \underline{q}= q_0+J_q | \underline{q}|$, we set $J_q= \underline{q}/ | \underline{q}|  \in \mathbb{S}$ and we associate to $q$ the 2-sphere defined by
$$ [q]:= \{q_0+J |\underline{q}| \ \ | \ \ J \in \mathbb{S}\}.$$
\begin{definition}
	Let $U \subseteq \mathbb{H}$.
	\begin{itemize}
		\item We say that $U$ is axially symmetric if, for every $u+Iv \in U$, all the elements $u+Jv$ for $J \in \mathbb{S}$ are contained in $U$.
		\item We say that $U$ is a slice domain if $U \cap \mathbb{R} \neq \emptyset$ and if $U \cap \mathbb{C}_J$ is a domain in $\mathbb{C}_J$ for every $J \in \mathbb{S}$.
	\end{itemize}
	
\end{definition}
\begin{definition}
	An axially symmetric open set $U \subset \mathbb{H}$ is called slice Cauchy domain if $U \cap \mathbb{C}_J$ is a Cauchy domain in $ \mathbb{C}_J$ for every $J \in \mathbb{S}$. More precisely, $U$ is a slice Cauchy domain if, for every $J \in \mathbb{S}$, the boundary of $ U \cap \mathbb{C}_J$ is the union of a finite number of nonintersecting piecewise continuously differentiable Jordan curves in $ \mathbb{C}_J$.
\end{definition}
The axially symmetric open sets are the suitable domains for the slice hyperholomorphic functions.
\begin{definition}[Slice hyperholomorphic functions]
	\label{hyper}
	Let $U \subseteq \mathbb{H}$ be an axially symmetric open set and let
	$$
	\mathcal{U} =\{ (u,v)\in \mathbb{R}^2 \ | \ u+\mathbb{S}v \in U\}.
	$$
	We say that a function $f: U \to \mathbb{H}$ of the form
	$$ f(q)=\alpha(u,v)+J\beta(u,v)$$
	is left slice hyperholomorphic if $\alpha$ and $\beta$ are $ \mathbb{H}$-valued differentiable functions such that
	\begin{equation}\label{eveodd}
		\alpha(u,v)=\alpha(u,-v), \ \ \ \  \beta(u,v)=-\beta(u,-v) \ \\ \ \  \hbox{for all} \, \, (u,v) \in \mathcal{U},
	\end{equation}
	and if $\alpha$ and $\beta$ satisfy the Cauchy-Riemann system
	$$ \partial_u \alpha(u,v)- \partial_v \beta(u,v)=0, \quad \partial_v \alpha(u,v)+ \partial_u \beta(u,v)=0.$$
A function right slice hyperholomorphic, if it is of the form
	$$ f(q)= \alpha(u,v)+\beta(u,v)J,$$
	where $\alpha$, $\beta$ satisfy the above conditions.
\end{definition}
The set of left (resp. right) slice hyperholomorphic functions on $U$ is denoted by the symbol $SH_{L}(U)$ (resp. $SH_{R}(U)$). The subset of intrinsic slice hyperholomorphic functions consists of those slice hyperholomorphic functions such that $\alpha$, $\beta$ are real-valued function and is denoted by $ N(U)$.

\medskip

Another class of hyperholomorphic functions that appears in the Fueter construction is the following
\begin{definition}[Fueter regular functions]
Let $U\subset \mathbb H$ be an open set. A real differentiable function $ f: U \to \mathbb{H}$ is called (left) Fueter regular if
$$\mathcal{D} f(q):=\partial_{q_0} f(q)+\sum_{i=1}^3 e_i \partial_{q_i}f(q)=0.$$
\end{definition}
A bridge between the slice hyperholomorphic and the monogenic functions is the Fueter theorem, see \cite{F, CSSS}.

\begin{theorem}[Fueter mapping theorem]
	\label{F1b}
	Let $f_{0}(z)= \alpha(u,v)+i \beta(u,v)$ be a holomorphic function defined in a domain (open and connected) $D$ in the upper-half complex plane and let
	$$ \Omega_D=\{q=q_0+\underline{q} \,\  |\  \ (q_0, |\underline{q}|) \in D\}$$
	be the open set induced by $D$ in $\mathbb{H}$. Then the operator $T_{F1}$ defined by
\begin{equation}
\label{slice}
f(q)= T_{F1}(f_0):= \alpha(q_0, |\underline{q}|)+ \frac{\underline{q}}{|\underline{q}|}\beta(q_0, |\underline{q}|)
\end{equation}
	maps the set of holomorphic functions in the set of intrinsic slice hyperholomorphic functions. Moreover, the function
	$$ \breve{f}(q):=T_{F2} \left(\alpha(q_0, |\underline{q}|)+ \frac{\underline{q}}{|\underline{q}|}\beta(q_0, |\underline{q}|)\right),$$
	where $T_{F2}= \Delta$ and $\Delta$ is the Laplacian in four real variables $q_{\ell}$, $ \ell=0,1,2,3$, is in the kernel of the Fueter operator i.e.
	$$ \mathcal{D} \breve{f}=0 \quad \hbox{on} \quad \Omega_D.$$
\end{theorem}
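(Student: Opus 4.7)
The statement contains two independent assertions: (a) $T_{F1}(f_0)\in N(\Omega_D)$ and (b) $\mathcal{D}(\Delta T_{F1}(f_0))=0$. For (a) I would write $f_0(u+iv)=\alpha(u,v)+i\beta(u,v)$ with real $\alpha,\beta$ satisfying the Cauchy--Riemann system $\partial_u\alpha=\partial_v\beta$, $\partial_v\alpha=-\partial_u\beta$ on $D$, then extend $\alpha,\beta$ to the symmetric set $\mathcal{U}$ by the prescriptions $\alpha(u,-v):=\alpha(u,v)$ and $\beta(u,-v):=-\beta(u,v)$. These are precisely the parity conditions \eqref{eveodd}, and they preserve the CR system on the reflected half by a direct check. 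Because every $q\in\Omega_D\setminus\mathbb{R}$ has the canonical form $q_0+J_q|\underline{q}|$ with $J_q\in\mathbb{S}$, the formula $f(q)=\alpha(q_0,|\underline{q}|)+J_q\beta(q_0,|\underline{q}|)$ defines a function on $\Omega_D$; since $\alpha,\beta$ are real-valued, $f\in N(\Omega_D)$ by Definition~\ref{hyper}.

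For (b) I would exploit the factorization $\Delta=\overline{\mathcal{D}}\mathcal{D}=\mathcal{D}\overline{\mathcal{D}}$. A short computation using $e_ie_j+e_je_i=-2\delta_{ij}$ shows that $\mathcal{D}$ and $\Delta$ commute, so
\[
\mathcal{D}\breve{f}=\mathcal{D}\Delta f=\mathcal{D}\bigl(\overline{\mathcal{D}}\mathcal{D}\bigr)f=\Delta(\mathcal{D}f).
\]
Thus the task reduces to proving that $\mathcal{D}f$ is componentwise harmonic. I would compute $\mathcal{D}f$ directly: setting $r=|\underline{q}|$ and using $\partial_{q_i}r=q_i/r$ and $\partial_{q_i}(\underline{q}/r)=e_i/r-q_i\underline{q}/r^3$, the Clifford contractions $\sum_ie_i^{\,2}=-3$, $\sum_ie_iq_i=\underline{q}$, and $\underline{q}^{\,2}=-r^2$ collapse everything to
\[
\mathcal{D}f(q)=(\partial_u\alpha-\partial_v\beta)+\frac{\underline{q}}{r}(\partial_v\alpha+\partial_u\beta)-\frac{2\beta(q_0,r)}{r}=-\frac{2\beta(q_0,r)}{r},
\]
the final equality by the CR system for $\alpha,\beta$.

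It then remains to show $\Delta\bigl(\beta(q_0,r)/r\bigr)=0$. For an axially symmetric scalar $g(q_0,r)$ the four-dimensional Laplacian reduces to $\Delta g=\partial_{q_0}^{\,2}g+\partial_r^{\,2}g+(2/r)\partial_r g$ via the $\mathbb{R}^3$-radial formula. Plugging in $g=\beta/r$, the $\partial_r(1/r)$ cross terms cancel pairwise, yielding $\Delta(\beta/r)=(\partial_u^{\,2}\beta+\partial_v^{\,2}\beta)/r=0$ since $\beta=\Im f_0$ is harmonic on $D$. Combined with the reduction above, this gives $\mathcal{D}\breve{f}=0$ on $\Omega_D\setminus\mathbb{R}$, and a real-analyticity argument (using that $\beta$ is odd in $v$, so $\beta/r$ extends continuously across $r=0$) propagates the identity to all of $\Omega_D$. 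The one genuinely delicate point is the noncommutative bookkeeping in the computation of $\mathcal{D}f$: the three contractions producing $-3\beta/r$, $+\beta/r$, and $-\partial_v\beta$ must combine just so to leave the clean residual $-2\beta/r$. Once this identity is secured, everything else is standard one-variable complex analysis.
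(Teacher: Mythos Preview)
The paper does not prove this theorem: it is stated in the preliminaries as the classical Fueter mapping theorem, with a citation to \cite{F} and \cite{CSSS}, and no argument is given. So there is no ``paper's proof'' to compare against.

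Your argument is correct and is essentially the classical proof. The reduction $\mathcal{D}\breve{f}=\mathcal{D}\Delta f=\Delta(\mathcal{D}f)$ is valid because $\mathcal{D}$ and $\Delta$ are constant-coefficient operators and $\Delta$ has real scalar coefficients. Your computation of $\mathcal{D}f=-2\beta(q_0,r)/r$ is right (and is in fact consistent with the paper's Theorem~\ref{res2bis}, which is the special case where $f$ is the slice Cauchy kernel). The verification that $\Delta(\beta/r)=0$ via the radial Laplacian is clean and the cancellations are exactly as you describe. One small simplification: since the theorem as stated takes $D$ in the open upper half-plane, the induced set $\Omega_D$ does not meet the real axis, so the extension across $r=0$ is not actually needed here; your oddness/real-analyticity remark would be required only in the variant where $D$ is symmetric about $\mathbb{R}$.
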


We will consider also  polyanalytic Fueter regular functions, see \cite{B1976}.

\begin{definition}
Let $U \subset \mathbb{H}$ be an open set and let $f: U \to \mathbb{H}$ be a function of class $ \mathcal{C}^n$, with $n \geq 1$. We say that $f$ is (left) polyanalytic Fueter  regular of order $n$ on $U$ if
$$ \mathcal{D}^{n}f(q)= \left(\partial_{q_0} +\sum_{i=1}^3 e_i \partial_{q_i}
\right)^n f(q)=0.$$
\end{definition}
For this kind of functions it is possible to give the following characterization, see \cite{B1976, DB1978}.
\begin{proposition}
A function $f$ is polyanalytic Fueter regular of order $n$ if and only if it can be decomposed in terms of unique Fueter regular functions $ \phi_0(q)$,..., $ \phi_{n-1}(q)$ such that we have
$$ f(q)= \sum_{k=0}^{n-1} x_0^k \phi_k(q).$$
\end{proposition}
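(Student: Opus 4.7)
My plan is to prove both directions together by induction on $n$, using the commutator identity $[\mathcal{D}, M_{q_0}] = I$, where $M_{q_0}$ denotes multiplication by $q_0$ (identifying the notation $x_0$ with $q_0$). Since $\partial_{q_0}(q_0)=1$ and the other terms in $\mathcal D$ annihilate $q_0$, one has $\mathcal{D}(q_0 g) = g + q_0 \mathcal{D}g$. Iterating gives the fundamental identity
\[
\mathcal{D}^{j}\bigl(q_0^{k}\,\phi\bigr) \;=\; \frac{k!}{(k-j)!}\,q_0^{k-j}\,\phi \quad\text{whenever } \mathcal{D}\phi = 0 \text{ and } j\le k,
\]
and the left-hand side vanishes for $j>k$. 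This is the only computational lemma needed, and it handles both implications.

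For the \emph{if} direction, assume $f(q)=\sum_{k=0}^{n-1} q_0^{k}\phi_{k}(q)$ with each $\phi_k$ Fueter regular. Applying $\mathcal{D}^{n}$ term-by-term and invoking the identity above with $j=n>k$ for each summand shows $\mathcal{D}^{n}f=0$.

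For the \emph{only if} direction, I argue by induction on $n$. The base case $n=1$ is trivial: $f$ itself is Fueter regular and we take $\phi_0=f$. For the inductive step, set $g:=\mathcal{D}^{n-1}f$; since $\mathcal{D}^{n}f=0$, the function $g$ is Fueter regular. Define
\[
\tilde f(q) \;:=\; f(q)-\frac{q_0^{n-1}}{(n-1)!}\,g(q).
\]
Using the identity above with $k=n-1$, $j=n-1$, $\phi=g$ we obtain $\mathcal{D}^{n-1}\bigl(\tfrac{q_0^{n-1}}{(n-1)!}g\bigr)=g$, whence $\mathcal{D}^{n-1}\tilde f=0$. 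By the induction hypothesis $\tilde f=\sum_{k=0}^{n-2}q_0^{k}\phi_{k}$ with $\phi_k$ Fueter regular, and the decomposition of $f$ follows upon setting $\phi_{n-1}:=g/(n-1)!$.

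Uniqueness is handled by a similar descending argument: if $\sum_{k=0}^{n-1}q_0^{k}\xi_{k}(q)=0$ with all $\xi_k$ Fueter regular, applying $\mathcal{D}^{n-1}$ and using the identity kills every term except the top one and yields $(n-1)!\,\xi_{n-1}=0$; then apply $\mathcal{D}^{n-2}$ to the remaining sum to conclude $\xi_{n-2}=0$, and so on. The main (and only nontrivial) obstacle is establishing the commutation identity for $\mathcal{D}^{j}$ with $M_{q_0}^{k}$, but this reduces to the elementary fact that only the $\partial_{q_0}$ summand in $\mathcal{D}$ sees $q_0$, so the standard binomial-type expansion for $[\partial_{q_0}, M_{q_0}]=I$ transfers directly to $\mathcal{D}$.
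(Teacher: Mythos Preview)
Your argument is correct. The commutator identity $\mathcal{D}(q_0\,g)=g+q_0\,\mathcal{D}g$ is immediate from the definition of $\mathcal{D}$, and from it your iterated formula $\mathcal{D}^{j}(q_0^{k}\phi)=\dfrac{k!}{(k-j)!}\,q_0^{k-j}\phi$ (vanishing for $j>k$) follows by a straightforward induction whenever $\mathcal{D}\phi=0$. Both implications and the uniqueness claim then go through exactly as you describe.

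Note, however, that the paper does not give its own proof of this proposition: it is stated as a known characterization with a reference to Brackx \cite{B1976} and Brackx--Delanghe \cite{DB1978}. Your proof is in fact the classical one from that literature: peel off the top coefficient via $\phi_{n-1}=\dfrac{1}{(n-1)!}\,\mathcal{D}^{n-1}f$, subtract, and recurse. So there is nothing to compare against in the present paper, but what you have written matches the standard argument and would serve perfectly well as a self-contained proof.
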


Now, let us consider $D$ be a  domain in the upper-half complex plane. Let $ \Omega_D$ be an axially symmetric open set in $ \mathbb{R}^4$ and let $x=x_0+ \underline{x}=x_0+ r \underline{\omega} \in \Omega_D$. We say that a function $f: \Omega_D \to \mathbb{H}$ is of axial type if there exist two quaternionic-valued functions $A(q_0,r)$ and $B(q_0,r)$ independent of $\underline{\omega} \in \mathbb{S}$ such that
$$ f(q)=A(q_0,r)+ \underline{\omega} B(q_0,r), \qquad r>0.$$ 

\begin{definition}[axially monogenic function]
Let $f: \Omega_D \subset \mathbb{R}^4 \to \mathbb{H}$ be of axial type and of class $ \mathcal{C}^3(\Omega_D)$. Then the function
$$ \breve{f}(q):= \Delta f(q) \qquad \hbox{on} \quad \Omega_D$$
is called axially monogenic, since by the Fueter theorem mapping theorem, it satisfies 
$$ \mathcal{D} \breve{f}(q)=0\qquad \hbox{on} \quad \Omega_D.$$
We denote this set of functions $ \mathcal{AM}(\Omega_D)$.
\end{definition}

\begin{definition}[axially polyanalytic function of order 2]
\label{axpol}
	Let $f: \Omega_D \subset \mathbb{R}^4 \to \mathbb{H}$ be of axial type and of class $ \mathcal{C}^3(\Omega_D)$. If we apply the conjugate Fueter operator $ \mathcal{\overline{D}}$ to \eqref{slice} we get
	$$\breve{f}^0(q)= \mathcal{\overline{D}}f(x)\qquad \hbox{on} \quad \Omega_D,$$ 
	which is an axially polyanalytic function of order two, by the Fueter mapping theorem, i.e.,
	$$ \mathcal{D}^2 \breve{f}^0(x)=0	\qquad \hbox{on} \quad \Omega_D.$$
	We denote this set of functions $\mathcal{AP}_2(\Omega_D)$.
\end{definition}

\begin{definition}[axially harmonic functions]
If we apply the Fueter operator to an axial function $f$ of class $ \mathcal{C}^3(\Omega_D)$ we get
$$ \tilde{f}(q):= \mathcal{D} f(q)\qquad \hbox{on} \quad \Omega_D,$$
which is an axially harmonic function by the Fueter mapping theorem, i.e.,
$$ \Delta  \tilde{f}(q)=0 \qquad \hbox{on} \quad \Omega_D.$$
We denote this set of functions $ \mathcal{AH}(\Omega_D)$.
\end{definition}

Now we introduce the slice hyperholomorphic Cauchy kernels.
\begin{definition}
	Let $s,\, q\in \mathbb H$ with $q\notin [s]$ then we define 
	$$ \PRes_{s}(q)^{-1}:=(q^2-2\Re(s)q+|s|^2)^{-1} ,\ \ \ \
	\PRes_{c,s}(q)^{-1}:=(s^2-2\Re(q)s+|q|^2)^{-1},
	$$
	that are called pseudo Cauchy kernel and commutative pseudo Cauchy kernel, respectively.
\end{definition}
\begin{definition}\label{d1}
	Let $s,\, q\in \mathbb H$ with $q\notin [s]$ then
	\begin{itemize}
		\item We say that the left slice hyperholomorphic Cauchy kernel  $S^{-1}_L(s,q)$ is written in form I if
		$$S^{-1}_L(s,q):=\PRes_{s}(q)^{-1}(\overline s-q).$$
		\item We say that the right slice hyperholomorphic Cauchy kernel  $S^{-1}_R(s,q)$ is written in form I if
		$$S^{-1}_R(s,q):= (\overline s-q)\PRes_{s}(q)^{-1}.$$
		\item We say that the left slice hyperholomorphic Cauchy kernel  $S^{-1}_L(s,q)$ is written in form II if
		$$S^{-1}_L(s,q):=(s-\overline q) \PRes_{c,s}(q)^{-1}.$$
		\item We say that the right slice hyperholomorphic Cauchy kernel  $S^{-1}_R(s,q)$ is written in form II if
		$$S^{-1}_R(s,q):= \PRes_{c,s}(q)^{-1}(s- \overline q).$$
	\end{itemize}
\end{definition}
It is possible to prove that the left (resp. the right) slice Cauchy kernel is left (resp. right) slice hyperholomorphic in $q$ and right (resp. left) slice hyperholomoprhic in $s$ (see \cite[Lemma 2.1.27]{CGKBOOK} ). In this article, unless otherwise specified, we refer to $S^{-1}_L(s,q)$ and $S^{-1}_R(s,q)$  written in form II. 

We can state the Cauchy formulas for the slice hyperholomorphic functions.
\begin{theorem}[The Cauchy formulas for slice hyperholomorphic functions]
	\label{Cauchy}
	Let $U\subset\mathbb{H}$ be a bounded slice Cauchy domain, let $J\in\mathbb{S}$ and set  $ds_J=ds (-J)$.
	If $f$ is a left slice hyperholomorphic function on a set that contains $\overline{U}$ then
	\begin{equation}
		f(q)=\frac{1}{2 \pi}\int_{\partial (U\cap \mathbb{C}_J)} S_L^{-1}(s,q)\, ds_J\,  f(s),\qquad\text{for any }\ \  q\in U.
	\end{equation}
	If $f$ is a right slice hyperholomorphic function on a set that contains $\overline{U}$,
	then
	\begin{equation}\label{Cauchyright}
		f(q)=\frac{1}{2 \pi}\int_{\partial (U\cap \mathbb{C}_J)}  f(s)\, ds_J\, S_R^{-1}(s,q),\qquad\text{for any }\ \  q\in U.
	\end{equation}
	These integrals  depend neither on $U$ nor on the imaginary unit $J\in\mathbb{S}$.
\end{theorem}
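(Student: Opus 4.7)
The plan is to deduce the slice Cauchy formula from its classical one-variable counterpart on each two-dimensional slice $\mathbb C_J$, after splitting $f$ into ordinary holomorphic components, and then to use the slice structure of both sides to reach an arbitrary $q\in U$. The first technical ingredient is a \emph{kernel collapse}: for $q\in\mathbb C_J$ and $s\in\mathbb C_J\setminus[q]$, working in the commutative field $\mathbb C_J$ and writing $q=q_0+Jq_1$ with $q_1\in\mathbb R$, the pseudo-resolvent factors as
$$
\PRes_{c,s}(q)=s^2-2\mathrm{Re}(q)s+|q|^2=(s-q)(s-\bar q),
$$
so that the form II kernel reduces to the classical one,
$$
S_L^{-1}(s,q)=(s-\bar q)\,\PRes_{c,s}(q)^{-1}=(s-q)^{-1}.
$$

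Next, for a fixed $q\in U\cap\mathbb C_J$, I would invoke the splitting lemma to write $f\vert_{U\cap\mathbb C_J}=F+GJ'$, where $J'\in\mathbb S$ is orthogonal to $J$ and $F,G\colon U\cap\mathbb C_J\to\mathbb C_J$ are holomorphic in the $\mathbb C_J$-sense. Since $U\cap\mathbb C_J$ is by hypothesis a Cauchy domain of $\mathbb C_J$, the classical Cauchy integral formula applied on each component of $\partial(U\cap\mathbb C_J)$ gives
$$
F(q)=\frac{1}{2\pi J}\int_{\partial(U\cap\mathbb C_J)}(s-q)^{-1}\,ds\,F(s),
$$
and the analogous identity for $G$. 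Using $ds_J=ds\,(-J)$ and the fact that $J$, $ds$ and $(s-q)^{-1}$ all lie in the commutative slice $\mathbb C_J$, the prefactor $\frac{1}{2\pi J}$ is absorbed into $ds_J$; right-$\mathbb H$-linearity of the integral then lets me reassemble $f=F+GJ'$ on the right-hand side. Combined with the kernel collapse, this yields the claimed formula at every $q\in U\cap\mathbb C_J$. The right slice hyperholomorphic case is treated analogously by reversing the order of the factors.

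To pass to an arbitrary $q\in U$, I would exploit slice hyperholomorphicity in the variable $q$: both sides of the formula are left slice hyperholomorphic on $U$ (the kernel $S_L^{-1}(s,q)$ being so by \cite[Lemma 2.1.27]{CGKBOOK}) and agree on the open subset $U\cap\mathbb C_J$, hence they agree on $U$ by the identity principle; equivalently one can apply the representation formula to write $f(q_0+J_q|\underline q|)$ as an explicit $\mathbb R$-linear combination of the two slice values $f(q_0\pm J|\underline q|)$. The independence from $J$ and $U$ finally follows from Cauchy's theorem applied slice by slice: $s\mapsto S_L^{-1}(s,q)\,f(s)$ is right slice hyperholomorphic in $s$ outside $[q]$, so two admissible choices of $(U,J)$ yield contours that can be deformed into one another within the axially symmetric domain. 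I expect the main delicate point to be the careful bookkeeping of quaternionic multiplication order when recombining the splitting, together with handling a possibly multi-component boundary $\partial(U\cap\mathbb C_J)$; once the kernel collapse is in place, the remainder is a clean reduction to classical complex analysis on each slice.
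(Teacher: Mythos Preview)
The paper states Theorem \ref{Cauchy} in the preliminaries without proof; it is a foundational result recalled from the literature (see \cite{CGKBOOK} and \cite{CSS3}), so there is no argument in the paper to compare against. Your proposal is correct and is exactly the standard proof one finds in those references: the kernel collapse $S_L^{-1}(s,q)=(s-q)^{-1}$ on a fixed slice $\mathbb C_J$, the splitting lemma to reduce to the classical Cauchy formula for $\mathbb C_J$-holomorphic components, and then the representation formula to extend from $U\cap\mathbb C_J$ to all of $U$. The only caveat is that the identity-principle route you mention first requires $U$ to be a slice domain (i.e.\ to meet the real axis), which is not assumed here; the representation-formula alternative you give afterwards is the safe choice, since it only needs axial symmetry.
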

Moreover, for slice hyperholomoprhic functions hold a version of the Cauchy integral theorem (see \cite[Theorem 2.1.21]{CGKBOOK}).
\begin{theorem}[Cauchy integral Theorem]\label{CIT}
	Let $U\subset \hh$ be open, let $J\in\mathbb S$, and let $f\in SH_L(U)$ and $g\in SH_R(U)$. Moreover, let $D_J\subset U\cap\cc_J$ be an open and bounded subset of the complex plane $\cc_J$ with $\overline D_J\subset U\cap\cc_J$ such that $\partial D_J$ is a finite union of piecewise continuously differentiable Jordan curves. Then
	$$\int_{\partial D_J} g(s)ds_J f(s)=0,$$
	where $ds_J=ds(-J)$.
\end{theorem}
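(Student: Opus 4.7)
My strategy is to reduce the statement to Stokes' theorem on the planar region $D_J\subset\mathbb{C}_J$, using the slice-wise complex analyticity built into Definition \ref{hyper}. The first observation is that the Cauchy-Riemann system for slice hyperholomorphic functions translates into the following classical-looking identities on each slice: writing $s=u+Jv$ and expanding $f(s)=\alpha(u,v)+J\beta(u,v)$, a direct computation using the CR system in Definition \ref{hyper} yields
$$(\partial_u+J\partial_v)f=(\partial_u\alpha-\partial_v\beta)+J(\partial_u\beta+\partial_v\alpha)=0,$$
and, analogously, $(\partial_u g)+(\partial_v g)J=0$ for $g\in SH_R(U)$. These are precisely the ingredients needed to force the integrand to be a closed $1$-form on $U\cap\mathbb{C}_J$.

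Next, I would parametrize $\mathbb{C}_J$ by $(u,v)\in\mathbb{R}^2$ so that $ds=du+J\,dv$ and hence $ds_J=ds(-J)=dv-J\,du$. Substituting into the integrand, the quaternion-valued $1$-form factors as
$$g(s)\,ds_J\,f(s)=g(s)f(s)\,dv-g(s)Jf(s)\,du,$$
since $du,dv$ are real scalars and therefore pass freely through quaternionic coefficients. Taking the exterior derivative and collecting the coefficient of $du\wedge dv$, the ordinary Leibniz rule (valid because we are differentiating with respect to the real variables $u,v$, which commute with everything) gives
$$\partial_u(gf)+\partial_v(gJf)=\bigl[(\partial_u g)+(\partial_v g)J\bigr]f+g\bigl[(\partial_u f)+J(\partial_v f)\bigr]=0,$$
by the two slice-wise CR identities established above.

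Finally, since $\overline{D_J}\subset U\cap\mathbb{C}_J$ is compact and $\partial D_J$ is a finite union of piecewise $\mathcal{C}^1$ Jordan curves, Stokes' theorem on $D_J$ delivers
$$\int_{\partial D_J}g(s)\,ds_J\,f(s)=\int_{D_J}d(g\,ds_J\,f)=0,$$
which is the claimed identity. The only conceptual hurdle is the bookkeeping around the non-commutativity of $\mathbb{H}$: one must be careful that the Leibniz factorization in the penultimate display keeps the constant $J$ in the correct position between $g$, $\partial g$, $f$, and $\partial f$. This is automatic because $J$ is a fixed quaternion, but it is the step that has to be checked most carefully.
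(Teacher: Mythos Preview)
Your argument is correct and is precisely the standard proof via Stokes' theorem on the slice $\mathbb{C}_J$; the computation of $d(g\,ds_J\,f)$ and the use of the left/right slice Cauchy--Riemann identities are carried out properly, with the non-commutativity bookkeeping handled correctly. Note that the paper does not actually prove this theorem but merely cites it from \cite[Theorem~2.1.21]{CGKBOOK}, where the proof given is essentially the one you have written.
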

By applying the Fueter map, namely the Laplace operator $\Delta$, to the second form of the slice Cauchy kernel we get the so called $F$-kernels (\cite{CSS}).
\begin{definition}
	\label{fres}
	Let $q$, $s \in \mathbb{H}$. We define for $ s \notin [q]$, the left $F$-kernel as
	\begin{equation}\label{lfk}
		F_{L}(s,q):= \Delta S^{-1}_L(s,q)=-4(s-\overline q)\PRes_{c,s}(q)^{-2},
	\end{equation}
	and the right $F$-kernel as
	\begin{equation}\label{rfk}
		F_R(s,q):=\Delta S^{-1}_R(s,q)=-4\PRes_{c,s}(q)^{-2} (s-\overline q).
	\end{equation}
\end{definition}
It is possible to prove that $F_L(s,q)$ (resp. $F_R(s,q)$) is left (resp. right) Fueter regular in $q$ and right (resp. left) slice hyperholomorphic in $s$ (see \cite{CGKBOOK}).
\begin{theorem}[The Fueter mapping theorem in integral form]
	\label{Fueter}
	Let $U\subset\mathbb{H}$ be a slice Cauchy domain, let $J\in\mathbb{S}$ and set  $ds_J=ds (-J)$.
	\begin{itemize}
		\item
		If $f$ is a left slice hyperholomorphic function on a set $W$, such that $\overline{U} \subset W$, then
		the left Fueter regular function  $\breve{f}(q)=\Delta f(q)$
		admits the integral representation
		\begin{equation}\label{FuetLSEC}
			\breve{f}(q)=\frac{1}{2 \pi}\int_{\partial (U\cap \mathbb{C}_J)} F_L(s,q)ds_J f(s).
		\end{equation}
		\item
		If $f$ is a right slice hyperholomorphic function on a set $W$, such that $\overline{U} \subset W$, then
		the right Fueter regular function $\breve{f}(q)=\Delta f(q)$
		admits the integral representation
		\begin{equation}\label{FuetRSCE}
			\breve{f}(q)=\frac{1}{2 \pi}\int_{\partial (U\cap \mathbb{C}_J)} f(s)ds_J F_R(s,q).
		\end{equation}
	\end{itemize}
	The integrals  depend neither on $U$ and nor on the imaginary unit $J\in\mathbb{S}$.
\end{theorem}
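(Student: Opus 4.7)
The plan is to differentiate the slice hyperholomorphic Cauchy formula of Theorem~\ref{Cauchy} under the integral sign, applying the Laplace operator $\Delta$ in the four real variables of $q$, and then invoke the definition of the $F$-kernels to recognise the resulting integrand. Since the statement is a direct consequence of the identity $T_{F2}=\Delta$ together with $\Delta S_L^{-1}(s,q)=F_L(s,q)$, the real content of the argument is a regularity check plus a contour-independence check.

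First, I would fix $q\in U$ and start from
$$
f(q)=\frac{1}{2\pi}\int_{\partial(U\cap\mathbb{C}_J)} S_L^{-1}(s,q)\,ds_J\,f(s).
$$
Choose a small neighbourhood $V$ of $q$ with $\overline V\subset U$ and whose closure is disjoint from the contour $\partial(U\cap\mathbb{C}_J)$; then for $s$ on the contour we have $s\notin[q']$ for every $q'\in V$, so $(s,q')\mapsto S_L^{-1}(s,q')$ and all its partial derivatives in the four real components of $q'$ are continuous on the compact product $\partial(U\cap\mathbb{C}_J)\times\overline V$. A standard dominated-convergence argument then justifies interchanging $\Delta_{q'}$ with the line integral, giving
$$
\breve{f}(q)=\Delta f(q)=\frac{1}{2\pi}\int_{\partial(U\cap\mathbb{C}_J)} \Delta_q S_L^{-1}(s,q)\,ds_J\,f(s).
$$
By Definition~\ref{fres}, $\Delta_q S_L^{-1}(s,q)=F_L(s,q)$, which produces \eqref{FuetLSEC}. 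The fact that $\breve f$ is left Fueter regular is immediate from the Fueter mapping theorem (Theorem~\ref{F1b}), but can also be read off directly from the integral representation since $F_L(s,q)$ is left Fueter regular in $q$.

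For the right version \eqref{FuetRSCE}, I would repeat the same differentiation-under-the-integral argument starting from \eqref{Cauchyright} and use $\Delta_q S_R^{-1}(s,q)=F_R(s,q)$. Finally, the independence from $U$ and from the imaginary unit $J$ is a contour-deformation argument: $F_L(s,q)$ is right slice hyperholomorphic in $s$ on $\mathbb H\setminus[q]$ (as recorded after Definition~\ref{fres}) and $f$ is left slice hyperholomorphic, so the slice Cauchy integral theorem (Theorem~\ref{CIT}) yields equality of the integrals over any two admissible slice Cauchy domains containing $q$, while the independence from $J$ reduces to the same deformation applied to the intersection of two complex planes $\mathbb C_J$ and $\mathbb C_{J'}$, exactly as for the slice Cauchy formula itself.

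The only point that demands real care is the differentiation under the integral sign, but since the contour is compact, bounded away from the singular set $[q]$, and $f$ is bounded there, the interchange is routine; so no step presents a genuine obstacle.
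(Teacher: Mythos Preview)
Your argument is correct and is exactly the standard proof of this result: apply $\Delta$ under the integral sign in the slice Cauchy formula, identify the resulting kernel via Definition~\ref{fres}, and deduce contour-independence from the right slice hyperholomorphicity of $F_L(s,\cdot)$ in $s$ combined with Theorem~\ref{CIT}. Note, however, that the paper does not actually prove this theorem: it is stated in the preliminaries section as a known result, with the proof residing in the original reference~\cite{CSS} (and also in \cite{CGKBOOK}). Your sketch matches that original argument essentially verbatim, so there is nothing to compare beyond observing that you have reconstructed the expected proof.
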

The main advantage to have the Fueter mapping theorem in integral form is that it is possible to obtain a monogenic function by computing the integral of a  suitable slice hyperholomorphic function. By applying only the Fueter operator $\mathcal D$ to the slice Cauchy kernel you get the pseudo Cauchy kernel (see \cite{CDPS}). 
\begin{theorem}
	\label{res2bis}
	Let $s$, $q \in \mathbb{H}$, be such that $s \notin [q]$ then
	$$\mathcal{D} S^{-1}_L(s,q)=-2\mathcal{Q}_{c,s}(q)^{-1}$$
	and
	$$S^{-1}_R(s,q) \mathcal{D}=-2\mathcal{Q}_{c,s}(q)^{-1} .$$
\end{theorem}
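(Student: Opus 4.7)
My plan is to prove the left-hand identity by a direct computation using the second form of the Cauchy kernel, $S_L^{-1}(s,q)=(s-\overline q)\,\mathcal Q_{c,s}(q)^{-1}$, and to apply the Leibniz rule for $\mathcal D=\partial_{q_0}+\sum_{i=1}^{3}e_i\partial_{q_i}$ carefully, since we are in a noncommutative setting. The right-hand identity then follows by the symmetric calculation with $\mathcal D$ acting from the right on $S_R^{-1}(s,q)=\mathcal Q_{c,s}(q)^{-1}(s-\overline q)$.

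The first ingredient is the derivative of the factor $s-\overline q$: since $\overline q=q_0-\underline q$, one has $\partial_{q_0}(s-\overline q)=-1$ and $\partial_{q_i}(s-\overline q)=e_i$ for $i=1,2,3$, so that $\mathcal D(s-\overline q)=-1-3=-4$. The second ingredient concerns $f_2:=\mathcal Q_{c,s}(q)^{-1}$. Because $\mathcal Q_{c,s}(q)=s^{2}-2q_0 s+q_0^{2}+|\underline q|^{2}$ is a polynomial in $s$ with coefficients that are real functions of $q$, $f_2$ lies in the commutative subalgebra generated by $s$; consequently $\partial_{q_0} f_2=(2s-2q_0)f_2^{2}$ and $\partial_{q_i} f_2=-2q_i f_2^{2}$, with the real scalars $q_i$, $q_0$ commuting freely through $f_2^{2}$.

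Assembling the Leibniz expansion, I get
\begin{equation*}
\mathcal D[(s-\overline q)f_2]=\mathcal D(s-\overline q)\,f_2+(s-\overline q)\partial_{q_0}f_2+\sum_{i=1}^{3}e_i(s-\overline q)\partial_{q_i}f_2,
\end{equation*}
where the order of factors in the last sum matters because $\underline q$ does not commute with $\overline q$ or $s$. Substituting the derivatives gives $-4f_2+\bigl[(s-\overline q)(2s-2q_0)-2\underline q(s-\overline q)\bigr]f_2^{2}$; the key algebraic step is to show that the bracket equals $2\mathcal Q_{c,s}(q)$. To this end I write $\underline q=q_0-\overline q$, expand both products, and observe that the cross terms combine into $2q_0\overline q-\overline q^{2}=\overline q(2q_0-\overline q)=\overline q\, q=|q|^{2}$, yielding exactly $2(s^{2}-2q_0 s+|q|^{2})=2\mathcal Q_{c,s}(q)$. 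Therefore the bracket times $f_2^{2}$ simplifies to $2f_2$, and the final value is $-4f_2+2f_2=-2\mathcal Q_{c,s}(q)^{-1}$, as desired.

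The main obstacle is precisely this bookkeeping of noncommutativity: terms like $-2\underline q(s-\overline q)$ cannot be rewritten as $-2(s-\overline q)\underline q$, so one must expand explicitly and then recognize the hidden quaternionic identity $\overline q q=|q|^{2}$ to collapse the expression. Once this is done for $S_L^{-1}$, the identity $S_R^{-1}(s,q)\mathcal D=-2\mathcal Q_{c,s}(q)^{-1}$ is obtained by mirroring the computation, with $\mathcal D$ acting on the right and the factor $s-\overline q$ now appearing on the right of $f_2$; all algebraic simplifications are identical because $\mathcal Q_{c,s}(q)$ and its derivatives remain central to the subalgebra $\mathbb R[s]$.
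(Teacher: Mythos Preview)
Your computation is correct: the Leibniz expansion is handled carefully (keeping the $e_i$ on the left of $(s-\overline q)$), the derivatives of $f_2=\mathcal Q_{c,s}(q)^{-1}$ are taken inside the commutative subalgebra $\mathbb{R}[s]$, and the key algebraic identity
\[
(s-\overline q)(2s-2q_0)-2\underline q\,(s-\overline q)=2\bigl(s^{2}-2q_0s+|q|^{2}\bigr)=2\,\mathcal Q_{c,s}(q)
\]
is verified via $\underline q=q_0-\overline q$ and $\overline q\,q=|q|^{2}$, yielding $-4f_2+2f_2=-2\mathcal Q_{c,s}(q)^{-1}$.

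Note that the paper does not give its own proof of this statement; it is quoted from \cite{CDPS}. A direct computation along the lines you carried out is precisely the argument used there, so your approach coincides with the one the authors rely on. There is nothing to add.
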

We note that the pseudo Cauchy kernels are harmonic functions in $q$ and, respectively, right and left slice hyperholomoprhic in $s$.

By applying the Fueter operator $\mathcal D$ to the Cauchy formulas in Theorem \ref{Cauchy} we the following result(see \cite{CDPS}).

\begin{theorem}[Integral representation of axially harmonic functions]
	\label{qthe}
	Let $W \subset \mathbb{H}$ be an open set. Let $U$ be a slice Cauchy domain such that $\overline{U} \subset W$. Then for $J \in \mathbb{S}$ and $ds_J=ds(-J)$ we have:
	\begin{itemize}
		\item[1)]If $f \in SH_L(W)$, then the function $ \tilde{f}(q)=\mathcal{D} f(q)$ is harmonic and it admits the following integral representation
		\begin{equation}
			\label{qform}
			\tilde{f}(q)=- \frac{1}{\pi} \int_{\partial(U \cap \mathbb{C}_J)} \mathcal{Q}_{c,s}(q)^{-1}ds_J f(s),\ \ \ \ q\in  U.
		\end{equation}
		\item[2)] If $f \in SH_R(W)$, then the function $ \tilde{f}(q)= f(q)\mathcal{D}$ is harmonic and it admits the following integral representation
		\begin{equation}
			\tilde{f}(q)=- \frac{1}{\pi} \int_{\partial(U \cap \mathbb{C}_J)} f(s) ds_J \mathcal{Q}_{c,s}(q)^{-1},\ \ \ \ q\in  U.
		\end{equation}
	\end{itemize}
	The integrals depend neither on $U$ nor on the imaginary unit $J  \in \mathbb{S}$.
\end{theorem}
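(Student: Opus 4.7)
The plan is a direct derivation: starting from the slice hyperholomorphic Cauchy formula of Theorem~\ref{Cauchy}, apply the Fueter operator $\mathcal D$ (in the variable $q$) to both sides, pass it under the integral sign, and invoke Theorem~\ref{res2bis} which already computes $\mathcal D S_L^{-1}(s,q)=-2\Q_{c,s}(q)^{-1}$ (and analogously on the right).

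More concretely, for part 1) I would proceed as follows. Fix $q\in U$ and choose $J\in\SS$. Since $\overline U\subset W$ and $U$ is a slice Cauchy domain, the Cauchy formula gives
\begin{equation*}
f(q)=\frac{1}{2\pi}\int_{\partial(U\cap\mathbb{C}_J)}S_L^{-1}(s,q)\,ds_J\,f(s).
\end{equation*}
The integrand is $\mathcal C^\infty$ in $q$ on a neighbourhood of any fixed interior point because $s\in\partial U$ forces $s\notin[q]$, and $\partial(U\cap\mathbb{C}_J)$ is compact. Hence all partial derivatives in $q_0,q_1,q_2,q_3$ commute with the integral (standard dominated convergence / Leibniz rule on a uniformly bounded compact set of $s$), and
\begin{equation*}
\tilde f(q)=\mathcal D f(q)=\frac{1}{2\pi}\int_{\partial(U\cap\mathbb{C}_J)}\bigl(\mathcal D_q S_L^{-1}(s,q)\bigr)\,ds_J\,f(s).
\end{equation*}
Substituting $\mathcal D_q S_L^{-1}(s,q)=-2\Q_{c,s}(q)^{-1}$ from Theorem~\ref{res2bis} yields the claimed formula. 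Independence of $U$ and $J$ is then inherited from the same property of the Cauchy formula: two admissible choices produce integrands that differ by a right slice hyperholomorphic factor in $s$ (namely $\Q_{c,s}(q)^{-1}$), so the Cauchy integral theorem (Theorem~\ref{CIT}) applies; this is where a small additional check is required, namely that $s\mapsto \Q_{c,s}(q)^{-1}$ is right slice hyperholomorphic on $\mathbb H\setminus[q]$, which follows from its explicit form $(s^2-2\Re(q)s+|q|^2)^{-1}$ by the same elementary calculation used for the Cauchy kernel.

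Harmonicity of $\tilde f$ follows in two equivalent ways. Abstractly: since $\Delta=\mathcal D\overline{\mathcal D}=\overline{\mathcal D}\mathcal D$ is scalar, it commutes with $\mathcal D$, and by the Fueter mapping theorem (Theorem~\ref{F1b}) $\mathcal D\Delta f=0$; hence $\Delta\tilde f=\Delta\mathcal D f=\mathcal D\Delta f=0$. Alternatively, directly from the integral representation: the kernel $\Q_{c,s}(q)^{-1}$ is harmonic in $q$ (as recalled in the paragraph after Theorem~\ref{res2bis}), and differentiating under the integral once more gives $\Delta\tilde f(q)=0$.

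Part 2) is completely parallel: one starts from the right Cauchy formula \eqref{Cauchyright}, applies $\mathcal D$ acting on the right in the variable $q$, uses the second identity of Theorem~\ref{res2bis}, and concludes by the same differentiation-under-integral argument. The main (mild) obstacle is really just the bookkeeping for this right-sided action, and verifying the independence of $J$ and $U$ for the new kernel; there is no genuine analytic difficulty, the heavy lifting being done by Theorem~\ref{res2bis} and Theorem~\ref{CIT}.
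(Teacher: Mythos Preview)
Your proposal is correct and follows exactly the route the paper indicates: the paper does not spell out a proof of this preliminary result but cites \cite{CDPS}, stating that it is obtained ``by applying the Fueter operator $\mathcal D$ to the Cauchy formulas in Theorem~\ref{Cauchy}'', which is precisely your argument. The only minor caveat is that your first justification of harmonicity via Theorem~\ref{F1b} is phrased for intrinsic functions, but your alternative argument (harmonicity of the kernel $\mathcal Q_{c,s}(q)^{-1}$ in $q$, then differentiate under the integral) is the clean one and suffices.
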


\subsection{The $S$-functional calculus, the $F$-functional calculus and the $Q$-functional calculus}
Let $X$ be a two sided quaternionic Banach module of the form $X= X_{\mathbb{R}} \otimes \mathbb{H}$, where $X_{\mathbb{R}}$ is a real Banach space. In this paper we consider $\mathcal{B}(X)$ the Banach space of all bounded right linear operators acting on $X$.
\\In the sequel we will consider bounded operators of the form $T=T_0+T_1e_1+T_2e_2+T_3e_3$, with commuting components $T_{i}$ acting on a real vector space $X_{\mathbb{R}}$, i.e., $T_i \in \mathcal{B}(X_{\mathbb{R}})$ for $i=0,1,2,3$. The subset of $ \mathcal{B}(X)$ given by the operators $T$ with commuting components $T_i$ will be denoted by $ \mathcal{BC}(X)$.

Now let $T:X \to X$ be a right (or left) linear operator. We give the following.
\begin{definition}
	Let $T\in \mathcal B(X)$. For $s\in\mathbb H$ we set
	$$ \mathcal Q_s(T):=T^2-2\Re(s)T+|s|^2\mathcal I. $$
	We define the $S$-resolvent set $\rho_S(T)$ of $T$ as
	$$\rho_S(T):=\{s\in\mathbb H: \, \mathcal Q_s(T)^{-1} \in \mathcal B(X)\},$$
	and we define the $S$-spectrum $\sigma_S(T)$ of $T$ as
	$$ \sigma_S(T):=\mathbb H\setminus \rho_S(T). $$
\end{definition}
For $s\in\rho_S(T)$, the operator $\mathcal Q_s(T)^{-1}$ is called the pseudo $S$-resolvent operator of $T$ at $s$.

\begin{theorem}
	Let $T\in\mathcal B(X)$ and $s\in\mathbb H$ with $\|T\|< |s|$. Then we have
	$$ \sum_{n=0}^{\infty}T^n s^{-n-1}=-\mathcal{Q}_s(T)^{-1}(T-\overline s\mathcal I),$$
	and
	$$ \sum_{n=0}^{\infty}s^{-n-1}T^n=-(T-\overline s\mathcal I)\mathcal{Q}_s(T)^{-1}. $$
\end{theorem}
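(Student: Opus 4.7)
The plan is to prove both identities by direct series manipulation: first verify convergence, then compute $\mathcal Q_s(T)$ applied to each series via the scalar identity $s^2-2\Re(s)s+|s|^2=0$, and finally deduce invertibility of $\mathcal Q_s(T)$ as a byproduct. The hypothesis $\|T\|<|s|$ gives the norm bound $\|T^n s^{-n-1}\|\leq \|T\|^n/|s|^{n+1}$, so both series $R:=\sum_{n\geq 0} T^n s^{-n-1}$ and $R':=\sum_{n\geq 0} s^{-n-1}T^n$ converge absolutely in $\mathcal B(X)$.

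For the first identity, I would work with the truncation $R_N=\sum_{n=0}^N T^n s^{-n-1}$ and expand
\begin{equation*}
\mathcal Q_s(T)R_N=\sum_{n=0}^N\bigl(T^{n+2}-2\Re(s)T^{n+1}+|s|^2 T^n\bigr)s^{-n-1}.
\end{equation*}
Reindexing the three pieces and collecting the coefficient of $T^k$ for $2\leq k\leq N$ produces the scalar factor $s^{-k-1}(s^2-2\Re(s)s+|s|^2)$, which vanishes by the identity $s\bar s=|s|^2$ together with $s+\bar s=2\Re(s)$. The surviving contributions are the $k=0$ term $|s|^2 s^{-1}\mathcal I=\bar s\mathcal I$, the $k=1$ term which after the same cancellation reduces to $-T$, and two boundary terms of order $(\|T\|/|s|)^{N+1}$ that vanish as $N\to\infty$. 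Hence $\mathcal Q_s(T)R=\bar s\mathcal I-T$, and because right-multiplication by the scalar $s^{-n-1}$ commutes with any right-$\mathbb H$-linear operator, the same manipulation yields $R\,\mathcal Q_s(T)=\bar s\mathcal I-T$.

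Since $\|T\|<|\bar s|$, the operator $\bar s\mathcal I-T$ is invertible by a standard Neumann series argument. From the two equalities $\mathcal Q_s(T)R=R\,\mathcal Q_s(T)=\bar s\mathcal I-T$ one reads off that $R(\bar s\mathcal I-T)^{-1}$ is both a left and a right inverse of $\mathcal Q_s(T)$, so $s\in\rho_S(T)$ and $R=-\mathcal Q_s(T)^{-1}(T-\bar s\mathcal I)$. The second formula follows from the mirror calculation, verifying $R'\mathcal Q_s(T)=\bar s\mathcal I-T$ in exactly the same way, with the scalar powers of $s$ now appearing on the left of each $T^k$; the same scalar identity supplies the cancellation. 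The only delicate point, and the main thing to watch, is the bookkeeping forced by the non-commutativity of $\mathbb H$, since one must carefully track on which side of $T$ each $s^{\pm k}$ sits during reindexing. This is harmless here because all scalar powers of $s$ commute among themselves, and the whole argument is formally parallel to the scalar expansion $\sum_n q^n s^{-n-1}=-\mathcal Q_s(q)^{-1}(q-\bar s)$ underlying the slice hyperholomorphic Cauchy kernel.
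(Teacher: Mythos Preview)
Your proof is correct. The paper itself does not supply a proof of this statement: it is listed among the preliminary results of Section~2 and taken from the standard literature on the $S$-functional calculus (see the reference \cite{CGKBOOK}). The argument you give---truncating the series, using the scalar identity $s^2-2\Re(s)s+|s|^2=0$ to force telescoping, and then deducing invertibility of $\mathcal Q_s(T)$ from that of $\bar s\mathcal I-T$---is precisely the classical one, and your handling of the side on which the powers of $s$ sit, together with the observation that $\mathcal Q_s(T)$ has real coefficients and hence commutes with right multiplication by any quaternionic scalar, is exactly what is needed to justify $R\,\mathcal Q_s(T)=\mathcal Q_s(T)\,R$.
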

According to the left or right slice hyperholomorphicity, there exist two different resolvent operators.
\begin{definition}[$S$-resolvent operators]

	Let $T \in \mathcal{B}(X)$ and $s\in\mathbb \rho_{S}(T)$. Then the left $S$-resolvent operator is defined as
	$$ S^{-1}_L(s,T):=-\mathcal{Q}_s(T)^{-1}(T-\overline s\mathcal I), $$
	and the right $S$-resolvent operator is defined as
	$$ S^{-1}_R(s,T):=-(T-\overline s\mathcal I)\mathcal{Q}_s(T)^{-1}. $$
\end{definition}

In order to give the definition of the $S$-functional calculus we need to introduce some notations. Let $T \in \mathcal{B}(X)$. We denote by $SH_L(\sigma_S(T))$, $SH_R(\sigma_S(T))$ and $N(\sigma_S(T))$ the sets of all left, right and intrinsic slice hyperholomorphic functions, respectively, with $ \sigma_S(T) \subset dom(f)$.

\begin{definition}[$S$-functional calculus]
	\label{Sfun}
	Let $T \in \mathcal{B}(X)$. Let $U$ be a slice Cauchy domain that contains $\sigma_S(T)$  and $\overline{U}$ is contained in the domain of $f$.  Set $ds_J=-dsJ$ for $J\in \mathbb{S}$ so we define
	\begin{equation}
		\label{Scalleft}
		f(T):={{1}\over{2\pi }} \int_{\partial (U\cap \mathbb{C}_J)} S_L^{-1} (s,T)\  ds_J \ f(s), \ \ {\rm for\ every}\ \ f\in SH_L(\sigma_S(T))
	\end{equation}
	and
	\begin{equation}
		\label{Scalright}
		f(T):={{1}\over{2\pi }} \int_{\partial (U\cap \mathbb{C}_J)} \  f(s)\ ds_J
		\ S_R^{-1} (s,T),\ \  {\rm for\ every}\ \ f\in SH_R(\sigma_S(T)).
	\end{equation}
\end{definition}
The definition of $S$-functional calculus is well posed since the integrals in (\ref{Scalleft}) and (\ref{Scalright}) depend neither on $U$ and nor on the imaginary unit $J\in\mathbb{S}$, see \cite{CGKBOOK, JFACSS}. In Section 4 it will be useful the following resolvent equation for the $S$-functional calculus (see \cite{ACGS15}).
\begin{theorem}
	\label{Sres}
	Let $T \in \mathcal{BC}(X)$ such that it commutes with $T$, then we have
	\begin{equation}\label{sresc}
		\begin{split}
			S^{-1}_R(s,T)S^{-1}_L(p,T)=& [\left(S^{-1}_R(s,T)-S^{-1}_L(p,T)\right)p+\\
			&- \bar{s}\left(S^{-1}_R(s,T)-S^{-1}_L(p,T)\right)] \mathcal{Q}_s(p)^{-1},
		\end{split}
	\end{equation}
	where $ \mathcal{Q}_s(p):= p^2-2s_0 p+|s|^2$.
\end{theorem}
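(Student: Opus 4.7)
The natural first move is to get rid of the denominator on the right-hand side. Multiplying the claimed identity on the right by $\mathcal{Q}_s(p) = p^2 - 2s_0 p + |s|^2$ (noting that $p$ commutes with any real polynomial in $p$ so $\mathcal{Q}_s(p)^{-1}$ makes sense as a quaternion) reduces the statement to the cleaner identity
$$
S^{-1}_R(s,T)\,S^{-1}_L(p,T)\,\mathcal{Q}_s(p) \;=\; S^{-1}_R(s,T)\,p - \bar{s}\,S^{-1}_R(s,T) - S^{-1}_L(p,T)\,p + \bar{s}\,S^{-1}_L(p,T).
$$
The hypothesis $T \in \mathcal{BC}(X)$ is the engine that will drive everything: the polynomials $\mathcal{Q}_s(T)$ and $\mathcal{Q}_p(T)$ in $T$ commute with each other and with $T$ itself, so the pseudo-resolvents $\mathcal{Q}_s(T)^{-1}$ and $\mathcal{Q}_p(T)^{-1}$ may be moved past $T$ and past each other freely.

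Next I would expand both sides with Form I, $S^{-1}_L(p,T) = -\mathcal{Q}_p(T)^{-1}(T-\bar{p}\mathcal{I})$ and $S^{-1}_R(s,T) = -(T-\bar{s}\mathcal{I})\mathcal{Q}_s(T)^{-1}$. The left-hand side becomes
$$
(T-\bar{s}\mathcal{I})\,\mathcal{Q}_s(T)^{-1}\mathcal{Q}_p(T)^{-1}(T-\bar{p}\mathcal{I})\,\mathcal{Q}_s(p),
$$
and the crucial algebraic input is the difference-of-pseudo-resolvents identity
$$
\mathcal{Q}_p(T) - \mathcal{Q}_s(p)\mathcal{I} \;=\; (T^2 - p^2\mathcal{I}) - 2\bigl(\Re(p)T - s_0 p\mathcal{I}\bigr) + (|p|^2-|s|^2)\mathcal{I},
$$
together with its $s\leftrightarrow p$ counterpart. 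Using the first identity I would rewrite $\mathcal{Q}_s(p)\mathcal{I}$ as $\mathcal{Q}_p(T)$ minus a correction; the $\mathcal{Q}_p(T)^{-1}$ then cancels against $\mathcal{Q}_p(T)$, producing a contribution built only from $\mathcal{Q}_s(T)^{-1}$, and the correction term (linear in $T$) gets split using the companion identity so that its remaining $\mathcal{Q}_s(T)$-piece cancels the last $\mathcal{Q}_s(T)^{-1}$.

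What is left is a purely linear operator expression in $T, \bar{s}, \bar{p}, p$ which must be reorganised into the four terms on the right-hand side of the cleared identity. Here one uses only what is actually true, namely that the $T_i$'s commute with real scalars; one never needs $T\bar{s}=\bar{s}T$. Matching coefficients in the $\{1,T\}$ and $\{1,p\}$ slots then finishes the verification.

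The main obstacle is bookkeeping rather than a deep idea: because $\bar{s}$, $\bar{p}$ and the imaginary components of $T$ do not commute, every rearrangement must be performed with constant attention to the order of factors, and the cancellations succeed only after $\mathcal{Q}_s(p)$ has been paired with the right operator polynomial on each side. Without the commuting-components hypothesis the very first step (moving $\mathcal{Q}_s(T)^{-1}$ past $\mathcal{Q}_p(T)^{-1}$) would already fail, which is exactly why this equation is the \emph{commutative} $S$-resolvent equation.
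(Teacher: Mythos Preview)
The paper does not prove this theorem; it quotes it from \cite{ACGS15} as a known result, so there is no in-paper argument to compare against directly. Your outline is a reasonable plan---clearing $\mathcal{Q}_s(p)$ and unpacking both resolvents via their pseudo-resolvent factorizations---and if carried out carefully it does yield the identity. But what you have written is a strategy, not a proof: the ``bookkeeping'' you defer is exactly where the noncommutativity bites, in particular because the quaternion $\mathcal{Q}_s(p)$ is not real and does not commute with $T$, so moving it from the far right past $(T-\bar p\mathcal I)$ to meet $\mathcal{Q}_p(T)^{-1}$ is not automatic.

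One point should be corrected. You claim that without the hypothesis $T\in\mathcal{BC}(X)$ the very first step---commuting $\mathcal{Q}_s(T)^{-1}$ past $\mathcal{Q}_p(T)^{-1}$---would fail. It would not: $\mathcal{Q}_s(T)=T^2-2s_0T+|s|^2\mathcal I$ and $\mathcal{Q}_p(T)=T^2-2p_0T+|p|^2\mathcal I$ are polynomials in the single operator $T$ with \emph{real} coefficients, so they commute with each other and with $T$ for every $T\in\mathcal B(X)$. In fact the $S$-resolvent equation in \cite{ACGS15} is proved for arbitrary $T\in\mathcal B(X)$; the paper restricts to $\mathcal{BC}(X)$ only because that is the ambient setting for the later calculi. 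The commuting-components hypothesis is essential for the $F$-, $Q$- and $P_2$-functional calculi, but not for this equation.

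For comparison, the standard proof in \cite{ACGS15} avoids unpacking the pseudo-resolvents altogether. It repeatedly applies the left and right $S$-resolvent identities
\[
S^{-1}_L(p,T)\,p-T\,S^{-1}_L(p,T)=\mathcal I,\qquad s\,S^{-1}_R(s,T)-S^{-1}_R(s,T)\,T=\mathcal I
\]
to rewrite $S^{-1}_R(s,T)S^{-1}_L(p,T)\,p^2$, $S^{-1}_R(s,T)S^{-1}_L(p,T)(-2s_0 p)$ and $S^{-1}_R(s,T)S^{-1}_L(p,T)\,|s|^2$ separately, then collects terms. This route is shorter and sidesteps the issue of shuttling the non-real scalar $\mathcal{Q}_s(p)$ past operator factors.
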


Now we want to introduce the $F$-functional calculus. Let us consider $T=T_{0}+ T_1e_1+T_2e_2+T_3e_3$ such that $T \in \mathcal{BC}(X)$.
\begin{definition}\label{QCS}
	Let $T \in \mathcal{BC}(X)$. For $s \in \mathbb{H}$ we set
	$$\mathcal{Q}_{c,s}(T)=s^2-s(T+\overline{T})+T \overline{T},$$
	where $ \overline {T}=T_{0}- T_1e_1-T_2e_2-T_3e_3$. We define the $ F$-resolvent set as
	$$ \rho_F(T)=\{s\in\mathbb H:\, \mathcal Q_{c,s}(T)^{-1}\in\mathcal B(X)\}. $$
	Moreover, we define the $F$-spectrum of $T$ as
	$$ \sigma_{F}(T)= \mathbb{H}\setminus \rho_F(T).$$
\end{definition}
By \cite[Prop. 4.14]{CSS} it turns out that the $F$-spectrum is the commutative version of the $S$-spectrum, i.e., we have
$$ \sigma_{F}(T)=\sigma_{S}(T), \qquad T \in \mathcal{BC}(X),$$
and consequently $\rho_F(T)=\rho_S(T)$.

For $s \in \rho_{S}(T)$ the operator $ \mathcal{Q}_{c,s}(T)^{-1}$ is called the commutative pseudo $S$-resolvent operator of $T$. It is possible to define a commutative version of the $S$-functional calculus (see \cite{CG}).

\begin{theorem}
	Let $T \in \mathcal{BC}(X)$ and $s \in \mathbb{H}$ be such that $\| T\| < s$. Then
	$$ \sum_{m=0}^\infty T^m s^{-1-m}=(s \mathcal{I}-\overline T)\mathcal Q_{c,s}(T)^{-1},$$
	and
	$$ \sum_{m=0}^\infty s^{-1-m}T^m =\mathcal Q_{c,s}(T)^{-1}(s\mathcal{I}-\overline T).$$
\end{theorem}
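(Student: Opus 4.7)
The plan is to leverage the preceding theorem, which for general $T\in\mathcal{B}(X)$ gives $\sum_{n=0}^{\infty}T^n s^{-n-1}=-\mathcal{Q}_s(T)^{-1}(T-\overline s\mathcal{I})$, and to show that under the commuting-components hypothesis $T\in\mathcal{BC}(X)$ the right-hand side coincides with the commutative form $(s\mathcal{I}-\overline T)\mathcal{Q}_{c,s}(T)^{-1}$. This is the operator-level analogue of the equality between Form I and Form II of the slice Cauchy kernel in Definition~\ref{d1}. Convergence of the series in operator norm is automatic since $\|T^n s^{-n-1}\|\le\|T\|^n|s|^{-n-1}$ with $\|T\|<|s|$, and invertibility of $\mathcal{Q}_{c,s}(T)$ is guaranteed by $\|T\|<|s|\Rightarrow s\in\rho_S(T)=\rho_F(T)$, as recalled after Definition~\ref{QCS}.

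Concretely, I would verify the purely algebraic identity
\[
(\overline s\mathcal{I}-T)\,\mathcal{Q}_{c,s}(T) \;=\; \mathcal{Q}_s(T)\,(s\mathcal{I}-\overline T),
\]
which, after multiplying on the left by $\mathcal{Q}_s(T)^{-1}$ and on the right by $\mathcal{Q}_{c,s}(T)^{-1}$, rearranges to the desired equality of the two resolvent expressions. Expanding both sides with $\mathcal{Q}_{c,s}(T)=s^2\mathcal{I}-s(T+\overline T)+T\overline T$ and $\mathcal{Q}_s(T)=T^2-2s_0 T+|s|^2\mathcal{I}$, the key simplifying input is that, since $T\in\mathcal{BC}(X)$, the combinations $T+\overline T=2T_0$ and $T\overline T=T_0^2+T_1^2+T_2^2+T_3^2$ are real-linear and therefore commute with $T$, with $\overline T$, and with left multiplication by any quaternion. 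Using in addition $2s_0=s+\overline s$ and $s\overline s=|s|^2$, every term matches except a residual factor of $-T\bigl(s^2-(s+\overline s)s+|s|^2\bigr)$, which vanishes identically. The second formula follows by the same computation applied on the opposite side.

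The main obstacle is careful bookkeeping of non-commutativity: left multiplications by $s$ and $\overline s$ do \emph{not} commute with $T$ or $\overline T$, and the algebraic cancellation works only because the symmetric combinations $T+\overline T$ and $T\overline T$ are real-linear under the commuting-components assumption. Without this hypothesis the identity between Form I and Form II would not transfer to the operator setting, which is precisely why this formula is restricted to $T\in\mathcal{BC}(X)$ and not valid for general $T\in\mathcal{B}(X)$.
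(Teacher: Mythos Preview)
The paper does not prove this theorem: it appears in the preliminaries section as a known result, attributed to \cite{CG}, with no argument given. There is therefore no paper proof to compare against.

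Your approach is correct and is in fact the standard one. Reducing to the earlier series identity $\sum_{n\ge 0}T^ns^{-n-1}=\mathcal{Q}_s(T)^{-1}(\overline{s}\mathcal{I}-T)$ and then checking the algebraic identity
\[
(\overline{s}\mathcal{I}-T)\,\mathcal{Q}_{c,s}(T)\;=\;\mathcal{Q}_s(T)\,(s\mathcal{I}-\overline{T})
\]
is precisely the operator-level analogue of the equivalence between Form~I and Form~II of the slice Cauchy kernel (Definition~\ref{d1}); see for instance \cite[Chap.~3]{CGKBOOK}. Your diagnosis of why the $\mathcal{BC}(X)$ hypothesis is needed is exactly right: only under the commuting-components assumption are $T+\overline{T}=2T_0$ and $T\overline{T}=\sum_{\ell}T_\ell^2$ real-linear, hence central, which is what allows the scalar computation to transfer verbatim to operators. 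The residual term you isolate, $-T\bigl(s^2-(s+\overline{s})s+|s|^2\bigr)$, indeed vanishes since $s^2-(s+\overline{s})s+|s|^2=-\overline{s}s+|s|^2=0$, and the second identity follows symmetrically.
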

\begin{definition}
	Let $T \in \mathcal{BC}(X)$ and $s \in \rho_{S}(T)$. We define the left commutative $S$-resolvent operator as
	$$ S^{-1}_L(s,T)=(s \mathcal{I}-\overline T)\mathcal Q_{c,s}(T)^{-1}, $$
	and the right commutative $S$-resolvent operator as
	$$
	S^{-1}_R(s,T)=\mathcal Q_{c,s}(T)^{-1}(s\mathcal{I}-\overline T).
	$$
\end{definition}

For the sake of simplicity we denote the commutative version of the $S$-resolvent operators with the same symbols used for the noncommutative ones. It is possible to define the $S$-functional calculus as done in Definition \ref{Sfun}.
In the sequel, when we deal with the $S$-resolvent operators, we intend their commutative version. Now we give the definition of the $F$-functional calculus.	
\begin{definition}[$F$-resolvent operators] Let $T \in \mathcal{BC}(X)$. We define the left $F$-resolvent operator as
	$$ F_{L}(s,T)=-4(s\mathcal{I}- \overline{T}) \mathcal{Q}_{c,s}(T)^{-2}, \qquad s \in \rho_{S}(T),$$
	and the right $F$-resolvent operator as
	$$ F_{R}(s,T)=-4\mathcal{Q}_{c,s}(T)^{-2}(s\mathcal{I}- \overline{T}) , \qquad s \in \rho_{S}(T).$$
\end{definition}
With the above definitions and Theorem \ref{Fueter} at hand, we can recall the $F$-functional calculus. It was first introduced in \cite{CSS} and then investigated in \cite{CDS, CG,CS}.

\begin{definition}[The $F$-functional calculus for bounded operators]
	Let $U$ be a slice Cauchy domain that contains $\sigma_S(T)$  and $\overline{U}$ is contained in the domain of $f$.
	Let $T= T_1e_1+T_2e_2+T_3e_3 \in\mathcal{BC}(X)$, assume that the operators $T_{\ell}$, $\ell=1,2,3$ have real spectrum and set $ds_J=ds(-J)$, where $J\in \mathbb{S}$.
	For any function $f\in SH_L(\sigma_S(T))$, we define
	\begin{equation}\label{DefFCLUb}
		\breve{f}(T):=\frac{1}{2\pi}\int_{\pp(U\cap \mathbb{C}_J)} F_L(s,T) \, ds_J\, f(s).
	\end{equation}
	For any $f\in SH_R(\sigma_S(T))$, we define
	\begin{equation}\label{SCalcMON}
		\breve{f}(T):=\frac{1}{2\pi}\int_{\pp(U\cap \mathbb{C}_J)} f(s) \, ds_J\, F_R(s,T).
	\end{equation}
	
\end{definition}
The definition of the $F$-functional calculus is well posed since
the integrals in (\ref{DefFCLUb}) and (\ref{SCalcMON}) depend neither on $U$ and nor on the imaginary unit $J\in\mathbb{S}$.

Another important tool is to write the commutative pseudo $S$-resolvent operator in terms of the $F$-resolvents, see \cite[Theorem 7.3.1]{CGKBOOK}
\begin{theorem}
	\label{qf}
	Let $T \in \mathcal{BC}(X)$ and let $s \in \rho_S(T)$. The $ F$ resolvent operators satisfy the equations
	$$  F_L(s,T)s-T F_L(s,T)=-4 \mathcal{Q}_{c,s}(T)^{-1},$$
	and
	$$  s F_R(s,T)- F_{R}(s,T)T=-4 \mathcal{Q}_{c,s}(T)^{-1}.$$
\end{theorem}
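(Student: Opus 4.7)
The plan is to prove the $F_L$ identity by direct algebraic manipulation from the definition, then observe that the $F_R$ identity follows by the mirror computation. The key preliminary observation is that in the commutative setting $T\in\mathcal{BC}(X)$, the operators
$$T+\overline{T}=2T_0\qquad\text{and}\qquad T\overline{T}=T_0^2+T_1^2+T_2^2+T_3^2$$
have real components (they lie in $\mathcal{B}(X_{\mathbb R})\otimes 1$) and hence commute with left (or right) scalar multiplication by any quaternion. Therefore the operator
$$\mathcal{Q}_{c,s}(T)=s^2\mathcal{I}-s(T+\overline{T})+T\overline{T}$$
commutes with $s\mathcal{I}$, and consequently so do $\mathcal{Q}_{c,s}(T)^{-1}$ and $\mathcal{Q}_{c,s}(T)^{-2}$. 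This single fact is what makes the whole computation go through.

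With that established, I would expand the left-hand side directly from the definition $F_L(s,T)=-4(s\mathcal{I}-\overline{T})\mathcal{Q}_{c,s}(T)^{-2}$, obtaining
$$F_L(s,T)s-TF_L(s,T)=-4\bigl[(s\mathcal{I}-\overline{T})\mathcal{Q}_{c,s}(T)^{-2}s-T(s\mathcal{I}-\overline{T})\mathcal{Q}_{c,s}(T)^{-2}\bigr].$$
Using the commutation of $s$ with $\mathcal{Q}_{c,s}(T)^{-2}$ to pull the trailing $s$ to the middle, this becomes
$$-4\bigl[(s\mathcal{I}-\overline{T})s-T(s\mathcal{I}-\overline{T})\bigr]\mathcal{Q}_{c,s}(T)^{-2}=-4\bigl[s^2\mathcal{I}-\overline{T}s-Ts+T\overline{T}\bigr]\mathcal{Q}_{c,s}(T)^{-2}.$$
Because $T+\overline{T}$ is real-linear it commutes with $s$, so $\overline{T}s+Ts=s(T+\overline{T})$, and the bracket collapses to $\mathcal{Q}_{c,s}(T)$. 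Multiplying by $\mathcal{Q}_{c,s}(T)^{-2}$ then produces exactly $-4\mathcal{Q}_{c,s}(T)^{-1}$.

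For the right-hand identity I would perform the mirror calculation, starting from $F_R(s,T)=-4\mathcal{Q}_{c,s}(T)^{-2}(s\mathcal{I}-\overline{T})$ and evaluating $sF_R(s,T)-F_R(s,T)T$; once $s$ is commuted to the right of $\mathcal{Q}_{c,s}(T)^{-2}$, the same bracket $s^2\mathcal{I}-s(T+\overline{T})+T\overline{T}=\mathcal{Q}_{c,s}(T)$ appears, yielding $-4\mathcal{Q}_{c,s}(T)^{-1}$.

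The only subtle point, and the one I would double-check carefully, is the commutativity claim used at the very first step: once one is convinced that $s\mathcal{I}$ commutes with $\mathcal{Q}_{c,s}(T)^{-2}$ (which follows from the commutativity of the components $T_i$ and the reality of $T+\overline{T}$ and $T\overline{T}$), the rest is bookkeeping. No integral representation is needed; the identity is purely algebraic.
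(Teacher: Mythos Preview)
Your argument is correct. The key observation---that $T+\overline{T}=2T_0$ and $T\overline{T}=\sum_{i=0}^3 T_i^2$ are real operators and therefore commute with left multiplication by the scalar $s$, which in turn forces $\mathcal{Q}_{c,s}(T)^{\pm 1}$ and $\mathcal{Q}_{c,s}(T)^{-2}$ to commute with $s\mathcal{I}$---is exactly what is needed, and the subsequent algebra is carried out cleanly. The collapse of the bracket $s^2\mathcal{I}-\overline{T}s-Ts+T\overline{T}$ to $\mathcal{Q}_{c,s}(T)$ is correct for the reason you state, and the mirror computation for $F_R$ goes through identically.

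As to comparison: the paper does not actually supply a proof of this statement; it is quoted from \cite[Theorem~7.3.1]{CGKBOOK} as a known tool. Your direct algebraic verification from the definitions is the standard route (and essentially the one in the cited reference), so there is nothing to contrast.
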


We conclude this section with the definition of the $Q$-functional calculus (harmonic functional calculus). This is crucial to get a product rule for the $F$-functional calculus (see \cite{CDPS}). 

\begin{definition}[$Q$-functional calculus on the $S$-spectrum]
	\label{qfun1}
	Let $T \in \mathcal{BC}(X)$ and set $ds_J=ds(-J)$ for $J \in \mathbb{S}$. For every function $\tilde{f}=\mathcal{D} f$ with $f \in SH_{L}(\sigma_S(T))$, we set
	\begin{equation}
		\label{inte1b}
		\tilde{f}(T):= - \frac{1}{\pi} \int_{\partial(U \cap \mathbb{C}_J)} \mathcal{Q}_{c,s}(T)^{-1} ds_J f(s),
	\end{equation}
	where $U$ is an arbitrary bounded slice Cauchy domain with $\sigma_{S}(T) \subset U$ and $ \overline{U} \subset dom(f)$ and $J \in \mathbb{S}$ is an arbitrary imaginary unit.
	\\ For every function $\tilde{f}= f\mathcal{D}$ with $f \in SH_{R}(\sigma_S(T))$, we set
	\begin{equation}
		\label{inte2b}
		\tilde{f}(T):=- \frac{1}{\pi} \int_{\partial (U \cap \mathbb{C}_J)} f(s) ds_J \mathcal{Q}_{c,s}(T)^{-1},
	\end{equation}
	where $U$ and $J$ are as above.
\end{definition}
In the previous definitions of the functional calculi we have always a right version and a left version. However, if we consider an intrinsic function the two versions coincide.
\begin{theorem}
	\label{intri}
	Let $T\in\mathcal B(X)$. If $f\in N(\sigma_S(T))$, then we have
	$$ \frac 1{2\pi}\int_{\partial(U\cap\cc_J)} S^{-1}_L(s,T)\, ds_J\, f(s)=\frac 1{2\pi}\int_{\partial(U\cap \cc_J)} f(s)\, ds_J\, S^{-1}_R(s,T),$$

	$$ -\frac 1{\pi}\int_{\partial(U\cap\cc_J)} \mathcal Q_{c,s}(s,T)\, ds_J\, f(s)=-\frac 1{\pi}\int_{\partial(U\cap \cc_J)} f(s)\, ds_J\, \mathcal Q_{c,s} (s,T),$$

	$$ \frac 1{2\pi}\int_{\partial(U\cap\cc_j)} F_L(s,T)\, ds_J\, f(s)=\frac 1{2\pi}\int_{\partial(U\cap \cc_J)} f(s)\, ds_J\, F_R(s,T).$$
\end{theorem}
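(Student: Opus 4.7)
My plan is to reduce each of the three identities to the case of intrinsic monomials $f(s)=s^m$, and then to extend to a general intrinsic $f$ by a Runge-type approximation. Two structural observations drive the argument. First, for $s\in\mathbb{C}_J$ and $f\in N$, the value $f(s)=\alpha(u,v)+J\beta(u,v)$ lies in $\mathbb{C}_J$, and therefore $f(s)$ commutes with $s$ and with the $\mathbb{C}_J$-valued form $ds_J$. Second, the pseudo $S$-resolvent $\mathcal{Q}_{c,s}(T)^{-1}$ is the inverse of $s^2\mathcal{I}-2sT_0+T\overline T$, whose coefficients, apart from the scalar powers of $s$, are real operators; consequently $\mathcal{Q}_{c,s}(T)^{-1}$ commutes with multiplication (from either side) by any element of $\mathbb{C}_J$ whenever $s\in\mathbb{C}_J$.

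For an intrinsic monomial $f(s)=s^m$ I would verify each identity directly. Using the expansions $S^{-1}_L(s,T)=\sum_{n\geq 0}T^n s^{-n-1}$ and $S^{-1}_R(s,T)=\sum_{n\geq 0}s^{-n-1}T^n$ on a circle of sufficiently large radius in $\mathbb{C}_J$ (together with the analogous expansions for the $Q$- and $F$-kernels obtained by applying $\mathcal{D}$ and $\Delta$ termwise), the left and right integrals of each identity collapse to the same operator by a residue computation: $T^m$ in the $S$-calculus identity, and the appropriate image of $s^m$ under $\mathcal{D}$ and $\Delta$ evaluated at $T$ in the $Q$- and $F$-calculus identities, respectively. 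By linearity, each identity then holds for every polynomial with real coefficients.

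The extension to a general $f\in N(\sigma_S(T))$ follows from the Runge-type approximation theorem for intrinsic slice hyperholomorphic functions (see for instance \cite{CGKBOOK}): on a neighbourhood of the contour $\partial(U\cap\mathbb{C}_J)$, the function $f$ can be uniformly approximated by real-coefficient polynomials. Since the three functional calculi are continuous with respect to uniform convergence on the contour (each integrand depends continuously on the data $f(s)$ on $\partial(U\cap\mathbb{C}_J)$), the polynomial identities pass to the limit and the theorem follows for the whole class $N(\sigma_S(T))$.

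The main obstacle I foresee is the direct verification on monomials when the resolvent kernels are written in the asymmetric commutative forms, for instance $S^{-1}_L(s,T)=(s\mathcal{I}-\overline T)\mathcal{Q}_{c,s}(T)^{-1}$ versus $S^{-1}_R(s,T)=\mathcal{Q}_{c,s}(T)^{-1}(s\mathcal{I}-\overline T)$. These do not agree as operators pointwise in $s$, because $\overline T$ fails to commute with multiplication by a generic element of $\mathbb{C}_J$; nevertheless, after integration against the $\mathbb{C}_J$-valued quantity $ds_J\,f(s)$ over the closed contour $\partial(U\cap\mathbb{C}_J)$ the two expressions produce the same operator $T^m$, and this is precisely the step where the intrinsic hypothesis on $f$ is used in an essential way. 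The analogous comment applies to $F_L$ versus $F_R$ in the third identity, while the second identity only requires moving the central factor $\mathcal{Q}_{c,s}(T)^{-1}$ past $f(s)\,ds_J$, which is permitted by the second structural observation above.
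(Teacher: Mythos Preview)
The paper does not supply a proof of this theorem: it is recorded as a preliminary fact, and for the analogous Lemma~\ref{inte4} the authors simply point to the methodology of \cite[Thm.~3.2.11]{CGKBOOK}. That standard argument is a direct contour computation: one parametrises $\partial(U\cap\mathbb{C}_J)$, uses that the curve is invariant under $s\mapsto\bar s$ (with reversed orientation), combines this with $f(\bar s)=\overline{f(s)}$ for intrinsic $f$, and checks that the left and right kernels are interchanged under this symmetry. No density or approximation step is involved.

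Your observation for the second identity is correct and in fact sharper than needed: since $T+\overline T=2T_0$ and $T\overline T$ have real components, $\mathcal{Q}_{c,s}(T)^{-1}$ commutes with every element of $\mathbb{C}_J$ whenever $s\in\mathbb{C}_J$, so that identity holds pointwise under the integral. The gap is in your extension step for the first and third identities. You assert that an arbitrary $f\in N(\sigma_S(T))$ can be uniformly approximated on a neighbourhood of $\partial(U\cap\mathbb{C}_J)$ by polynomials with real coefficients. This fails in general: the slice Runge theorem in \cite{CGKBOOK} only gives approximation by \emph{rational} intrinsic functions when the complement of the compact set is disconnected, and $\partial(U\cap\mathbb{C}_J)$ always separates $\mathbb{C}_J$ into at least two components (the inside $U\cap\mathbb{C}_J$ and the unbounded exterior). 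Concretely, if $0\notin\sigma_S(T)$ and $f(s)=s^{-1}$, no sequence of polynomials converges to $f$ uniformly on a contour encircling the origin. To rescue your route you would have to verify the identities for all intrinsic rational functions with poles off $\overline U$, which is essentially the same amount of work as the direct symmetry argument; it is cleaner to follow the latter.
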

\begin{theorem}[The product rule for the $F$-functional calculus]
	
	Let $T \in \mathcal{BC}(X)$ and assume $f \in N(\sigma_S(T))$ and $g \in SH_L(\sigma_S(T))$ then we have
	\begin{equation}
		\label{Pr0}
		\Delta (fg)(T)=(\Delta f)(T) g(T)+f(T) (\Delta g)(T)- (\mathcal{D}f)(T) (\mathcal{D} g)(T).
	\end{equation}
\end{theorem}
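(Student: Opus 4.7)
The identity is an equality of bounded operators, and the plan is to realize each of the four operators on the right--hand side as a contour integral via the appropriate functional calculus and combine them into a single double integrand. Because $f$ is intrinsic, Theorem~\ref{intri} lets me use either the left or the right formula for $(\Delta f)(T)$, $f(T)$, and $(\mathcal{D}f)(T)$, so I place $f(p)$ on the left and $g(s)$ on the right in every double integral. Fix nested slice Cauchy domains
\begin{equation*}
\sigma_S(T)\subset U_s\subset\overline{U_s}\subset U_p\subset\overline{U_p}\subset \fdom(f)\cap\fdom(g),
\end{equation*}
an imaginary unit $J\in\mathbb S$, and set $dp_J=-dpJ$, $ds_J=-dsJ$. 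Writing the three products on the right--hand side as iterated integrals over $\partial(U_p\cap\mathbb C_J)\times \partial(U_s\cap\mathbb C_J)$, the whole right--hand side becomes
\begin{equation*}
\frac{1}{(2\pi)^2}\int\!\!\int f(p)\,dp_J\,\mathcal K(p,s,T)\,ds_J\,g(s),
\end{equation*}
with operator kernel
\begin{equation*}
\mathcal K(p,s,T):=F_R(p,T)\,S^{-1}_L(s,T)+S^{-1}_R(p,T)\,F_L(s,T)-4\,\mathcal Q_{c,p}(T)^{-1}\mathcal Q_{c,s}(T)^{-1}.
\end{equation*}

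The heart of the proof is a kernel identity of the form
\begin{equation*}
\mathcal K(p,s,T)=F_L(s,T)\,S^{-1}_L(p,s)+\mathcal R(p,s,T),
\end{equation*}
where the remainder $\mathcal R(p,s,T)$ is, as a function of $p$, right slice hyperholomorphic throughout $U_p$. I would derive this identity by starting from the commutative $S$-resolvent equation (Theorem~\ref{Sres}), which expresses $S^{-1}_R(p,T)\,S^{-1}_L(s,T)$ in terms of the single resolvents times the scalar factor $\mathcal Q_p(s)^{-1}$, and then applying the appropriate derivatives in $p$ and $s$, using $F_R(p,T)=\Delta_p S^{-1}_R(p,T)$, $F_L(s,T)=\Delta_s S^{-1}_L(s,T)$, and $\mathcal Q_{c,\bullet}(T)^{-1}=-\tfrac12\,\mathcal D_\bullet S^{-1}_\bullet(\bullet,T)$ from Theorem~\ref{res2bis}. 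After collecting terms, the $F_R S^{-1}_L$, $S^{-1}_R F_L$, and $\mathcal Q_{c}\mathcal Q_{c}$ pieces combine into precisely $\mathcal K(p,s,T)$, while the remaining summands organise into an $F_L(s,T)\,S^{-1}_L(p,s)$ term plus a $p$-slice-hyperholomorphic remainder $\mathcal R$.

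Granted the kernel identity, the Cauchy integral theorem (Theorem~\ref{CIT}) annihilates the $\mathcal R$-contribution under $\int f(p)\,dp_J\,\mathcal R(p,s,T)$, while the surviving $S^{-1}_L(p,s)$ factor collapses the inner $p$-integral to $f(s)$ via the slice Cauchy formula of Theorem~\ref{Cauchy} (applicable because $s\in \overline{U_s}\subset U_p$). What remains is $\tfrac{1}{2\pi}\int F_L(s,T)\,ds_J\,f(s)g(s)=\Delta(fg)(T)$, using that $fg\in SH_L(\sigma_S(T))$ since $f$ is intrinsic. The main obstacle is the kernel identity itself: the $S$-resolvent equation is noncommutative and contains factors $\bar p$, $s$, and the scalar $\mathcal Q_p(s)^{-1}$ which depend on $p$ in non-slice-hyperholomorphic ways, so the differentiations produce a forest of cross--terms; aligning the numerical coefficients $-2$ from $\mathcal D S^{-1}=-2\mathcal Q_c^{-1}$ and $-4$ from $F=\Delta S^{-1}$ so that the $-4\,\mathcal Q_{c,p}(T)^{-1}\mathcal Q_{c,s}(T)^{-1}$ piece in $\mathcal K$ exactly cancels the extraneous contribution, leaving $F_L(s,T)\,S^{-1}_L(p,s)$ modulo a $p$-slice-hyperholomorphic remainder, is the delicate computation.
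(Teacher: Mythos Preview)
Your overall architecture is right and matches the standard proof (which the paper imports from \cite{CDPS}): write the right-hand side as a double contour integral with kernel $\mathcal K(p,s,T)$, invoke an algebraic identity for that kernel, kill part of the resulting expression with Theorem~\ref{CIT}, and collapse the rest with Lemma~\ref{app}. The kernel you write down is exactly the one that appears; the relevant identity is \cite[Lemma~7.3.2]{CGKBOOK}, which in your variable names reads
\[
\mathcal K(p,s,T)=\bigl[(F_R(p,T)-F_L(s,T))s-\bar p\,(F_R(p,T)-F_L(s,T))\bigr]\mathcal Q_p(s)^{-1}.
\]
From here the argument finishes as you outline: the $F_R(p,T)$ pieces integrate to zero in the inner variable because $s\mapsto s\,\mathcal Q_p(s)^{-1}$ and $s\mapsto\mathcal Q_p(s)^{-1}$ are intrinsic slice hyperholomorphic on the smaller domain, and the $F_L(s,T)$ pieces are handled by Lemma~\ref{app} (not by the ordinary Cauchy formula---the expression $[\bar p\,B-Bs]\mathcal Q_p(s)^{-1}$ is precisely what Lemma~\ref{app} is built for, and it is \emph{not} equal to $B\,S^{-1}_L(p,s)$ since $B=F_L(s,T)$ does not commute with $s$).

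The genuine gap in your proposal is the plan for \emph{deriving} the kernel identity. You propose to differentiate the $S$-resolvent equation in the scalar variables $p$ and $s$, invoking $F_R(p,T)=\Delta_p S^{-1}_R(p,T)$, $F_L(s,T)=\Delta_s S^{-1}_L(s,T)$, and $\mathcal Q_{c,s}(T)^{-1}=-\tfrac12\,\mathcal D_s S^{-1}_L(s,T)$. None of these relations hold. The identities $F_L(s,q)=\Delta S^{-1}_L(s,q)$ and $\mathcal D S^{-1}_L(s,q)=-2\mathcal Q_{c,s}(q)^{-1}$ from Theorem~\ref{res2bis} involve differentiation in the \emph{second} variable $q$, the one later replaced by the operator $T$; once $T$ is substituted there is nothing to differentiate. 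In the scalar variable $s$ the kernel $S^{-1}_L(s,T)$ is right slice hyperholomorphic, so applying $\Delta_s$ or $\mathcal D_s$ produces axially monogenic or axially harmonic functions of $s$, not $F_L(s,T)$ or $\mathcal Q_{c,s}(T)^{-1}$, which remain right slice hyperholomorphic in $s$. The kernel identity above is proved in \cite{CGKBOOK} by purely algebraic manipulation of the operator expressions (multiplying the $S$-resolvent equation on the left and right by suitable factors of $\mathcal Q_{c,\cdot}(T)^{-1}$ and reorganising), exactly as the paper does for its own resolvent equations in Section~4 and Theorem~\ref{Hres}; that is the step you need to replace.
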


\section{The polyanalytic functional calculus of order $2$ on the $S$-spectrum}
In \cite{DP}, as far the authors know, a polyanalytic functional calculus of order $2$ is defined for the first time. In this section we recall the main results of \cite{DP} and we will prove further properties for this functional calculus. It is based on applying the conjugate of the Fueter operator to the slice hyperholomorphic Cauchy kernels, as illustrated in diagram \eqref{diagg}.

\begin{proposition}
	Let $q$, $s \in \mathbb{H}$ be such that $ x \notin [s]$. Let $S^{-1}_L(s,q)$ and $S^{-1}_R(s,q)$ be the slice hyperholomorphic Cauchy kernels written in form II. Then
	\begin{itemize}
		\item The function $ \mathcal{\overline{D}}S^{-1}_L(s,q)$ is a left polyanalytic function of order 2 in the variable $q$ and right slice hyperholomorphic in $s$.
		\item The function $ S^{-1}_R(s,q)\mathcal{\overline{D}}$ is a right polyanalytic function of order 2 in the variable $q$ and left slice hyperholomorphic in $s$.
	\end{itemize}
\end{proposition}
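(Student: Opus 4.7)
The plan is to use the factorization $\mathcal{D}\overline{\mathcal{D}}=\overline{\mathcal{D}}\mathcal{D}=\Delta$ of the Laplacian, combined with Theorem \ref{res2bis} and the harmonicity in $q$ of the pseudo Cauchy kernel recalled right after that theorem.

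For the first bullet, I would compute, using that $\mathcal{D}$ and $\overline{\mathcal{D}}$ commute on $\mathcal{C}^{\infty}$-functions (both orderings yield $\Delta$) and that the scalar operator $\Delta$ commutes with $\mathcal{D}$,
\[
\mathcal{D}^2\bigl(\overline{\mathcal{D}}\,S_L^{-1}(s,q)\bigr)=\mathcal{D}\bigl(\mathcal{D}\overline{\mathcal{D}}\bigr)S_L^{-1}(s,q)=\Delta\bigl(\mathcal{D}\,S_L^{-1}(s,q)\bigr)=-2\,\Delta\mathcal{Q}_{c,s}(q)^{-1}=0,
\]
where the third equality is Theorem \ref{res2bis} and the last is the harmonicity of $\mathcal{Q}_{c,s}(q)^{-1}$ in $q$ on $\{q\notin[s]\}$. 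This shows that $\overline{\mathcal{D}}\,S_L^{-1}(s,q)$ is left polyanalytic of order $2$ in $q$. Right slice hyperholomorphicity in $s$ is then immediate: the kernel $S_L^{-1}(s,q)$ is right slice hyperholomorphic in $s$ by Definition \ref{d1}, and the operator $\overline{\mathcal{D}}$ acts only on the variable $q$, so it commutes with the differential relations in $s$ characterising slice hyperholomorphicity.

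The second bullet is proved by the mirror computation with $\mathcal{D}$ acting from the right,
\[
\bigl(S_R^{-1}(s,q)\,\overline{\mathcal{D}}\bigr)\mathcal{D}^2=\bigl(S_R^{-1}(s,q)\,\mathcal{D}\bigr)\Delta=-2\,\mathcal{Q}_{c,s}(q)^{-1}\Delta=0,
\]
invoking the right-acting identity $S_R^{-1}(s,q)\,\mathcal{D}=-2\,\mathcal{Q}_{c,s}(q)^{-1}$ from Theorem \ref{res2bis} and once again the harmonicity of $\mathcal{Q}_{c,s}(q)^{-1}$. Left slice hyperholomorphicity in $s$ is inherited from $S_R^{-1}(s,q)$ by the same reasoning as in the first part.

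The argument is essentially bookkeeping; the only minor point of care is the consistency of operator orderings, so that the two versions of Theorem \ref{res2bis} (with $\mathcal{D}$ on the left or on the right of the kernel) are matched with the left or right Cauchy kernel being differentiated. I do not foresee any deeper obstacle: once the factorization of $\Delta$ and the harmonicity of the pseudo Cauchy kernel are in place, the proof reduces to two short identities.
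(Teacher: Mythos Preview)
Your argument is correct. The key identities $\mathcal{D}^2\overline{\mathcal{D}}=\Delta\mathcal{D}$ and $\mathcal{D}S_L^{-1}(s,q)=-2\,\mathcal{Q}_{c,s}(q)^{-1}$ together with the harmonicity of $\mathcal{Q}_{c,s}(q)^{-1}$ immediately give $\mathcal{D}^2\bigl(\overline{\mathcal{D}}S_L^{-1}(s,q)\bigr)=0$, and the slice hyperholomorphicity in $s$ is indeed inherited because $\overline{\mathcal{D}}$ acts only in $q$. The right version is handled symmetrically.

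The route in the paper is different: the proposition is not proved here directly but is quoted from \cite{DP}, where it is obtained via the explicit computation recorded in Theorem~\ref{res2}, namely $\overline{\mathcal{D}}S_L^{-1}(s,q)=-F_L(s,q)s+q_0F_L(s,q)$. Polyanalyticity of order~2 then follows from the decomposition characterisation (a polyanalytic function of order $n$ is exactly one of the form $\sum_{k=0}^{n-1}q_0^k\phi_k$ with $\phi_k$ Fueter regular), since $F_L(s,q)$ and $F_L(s,q)s$ are left Fueter regular in $q$; right slice hyperholomorphicity in $s$ is read off from the corresponding property of $F_L$. Your argument is shorter and more conceptual, avoiding any computation; the paper's approach, on the other hand, produces the explicit kernel $\mathcal{P}_2^L(s,q)$, which is what is actually needed to set up the $P_2$-functional calculus (Definition~\ref{dbars}, Proposition~\ref{p2}, etc.). Incidentally, an equally short variant of your argument would use $\mathcal{D}^2\overline{\mathcal{D}}S_L^{-1}=\mathcal{D}\bigl(\Delta S_L^{-1}\bigr)=\mathcal{D}F_L(s,q)=0$, invoking the Fueter regularity of $F_L$ instead of the harmonicity of $\mathcal{Q}_{c,s}^{-1}$.
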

In \cite{DP} we have provided direct computations for $ \mathcal{\overline{D}}S^{-1}_L(s,q)$ and $ S^{-1}_R(s,q)\mathcal{\overline{D}}$. We define the $ \mathcal{P}^L_2(s,q)$ and $ \mathcal{P}^R_2(s,q)$ kernels as
\begin{theorem}
	\label{res2}
	Let $q$, $s \in \mathbb{H}$. For $ x \notin [s]$ we define the $ \mathcal{P}^L(s,q)$ kernel as
	$$ \mathcal{P}^L_2(s,q):= \mathcal{\overline{D}}S^{-1}_L(s,q)=- F_L(s,q)s+q_0 F_L(s,q)$$
	and the $ \mathcal{P}^R(s,T)$ as 
	$$ \mathcal{P}^R_2(s,q):=S^{-1}_R(s,q) \mathcal{\overline{D}}=- sF_L(s,q)+q_0 F_L(s,q)$$
	where $F_L(s,q)$ and $F_R(s,q)$ are defined in \eqref{lfk} and \eqref{rfk}.
\end{theorem}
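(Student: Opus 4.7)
The cleanest route is to replace the operator $\overline{\mathcal{D}}$ by the combination $\overline{\mathcal{D}}=2\partial_{q_0}-\mathcal{D}$ (and its right analogue), because the $\mathcal{D}$–part is already evaluated in Theorem \ref{res2bis}, and $\partial_{q_0}$ is a scalar derivation, hence Leibniz-compatible with no side-switching. So the plan is: first, rewrite
$$\overline{\mathcal{D}}S_L^{-1}(s,q)=2\,\partial_{q_0}S_L^{-1}(s,q)-\mathcal{D}\,S_L^{-1}(s,q)=2\,\partial_{q_0}S_L^{-1}(s,q)+2\,\mathcal{Q}_{c,s}(q)^{-1},$$
and similarly on the right $S_R^{-1}(s,q)\overline{\mathcal{D}}=2\,\partial_{q_0}S_R^{-1}(s,q)+2\,\mathcal{Q}_{c,s}(q)^{-1}$.

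Second, compute $\partial_{q_0}S_L^{-1}(s,q)$ directly from form II, $S_L^{-1}(s,q)=(s-\overline{q})\mathcal{Q}_{c,s}(q)^{-1}$. Observe that $\partial_{q_0}(s-\overline{q})=-1$ and $\partial_{q_0}\mathcal{Q}_{c,s}(q)=-2(s-q_0)$, and use the crucial fact that for fixed $s$ the kernel $\mathcal{Q}_{c,s}(q)=s^{2}-2q_0 s+|q|^{2}$ lies in the commutative slice $\mathbb{C}_{J_s}$, so $\mathcal{Q}_{c,s}(q)^{-1}$ commutes with $s$, with $q_0$, and with its own derivative. This gives
$$\partial_{q_0}S_L^{-1}(s,q)=-\mathcal{Q}_{c,s}(q)^{-1}+2(s-\overline{q})(s-q_0)\mathcal{Q}_{c,s}(q)^{-2}.$$
Plugging back, the two $\mathcal{Q}_{c,s}(q)^{-1}$ terms cancel and one obtains
$$\overline{\mathcal{D}}S_L^{-1}(s,q)=4(s-\overline{q})(s-q_0)\mathcal{Q}_{c,s}(q)^{-2}.$$

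Third, match with the target. Since $F_L(s,q)=-4(s-\overline{q})\mathcal{Q}_{c,s}(q)^{-2}$ and both $s,q_0\in\mathbb{C}_{J_s}$ commute with $\mathcal{Q}_{c,s}(q)^{-2}$,
$$-F_L(s,q)\,s+q_0 F_L(s,q)=4\bigl[(s-\overline{q})s-q_0(s-\overline{q})\bigr]\mathcal{Q}_{c,s}(q)^{-2},$$
and a short expansion using $\overline{q}=q_0-\underline{q}$ (and the fact that $q_0$ commutes with $\underline q$) rewrites the bracket as $(s-\overline{q})(s-q_0)$, giving exactly the expression above. The right-sided identity follows from the mirror calculation, applying $\overline{\mathcal{D}}$ on the right to $S_R^{-1}(s,q)=\mathcal{Q}_{c,s}(q)^{-1}(s-\overline{q})$ and rearranging by the same commutativity argument; this yields $4\mathcal{Q}_{c,s}(q)^{-2}(s-q_0)(s-\overline{q})=-sF_R(s,q)+q_0 F_R(s,q)$ (correcting what appears to be a misprint $F_L\to F_R$ in the statement).

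The only subtle point is bookkeeping of non-commutativity: $\overline{q}$ does not commute with $s$, so one must avoid any step that tries to move $\overline{q}$ past $s$. The strategy above bypasses the issue entirely by only ever commuting the two objects that live in $\mathbb{C}_{J_s}$, namely $s,q_0$, and $\mathcal{Q}_{c,s}(q)$. This makes the computation essentially a one-line reduction once $\overline{\mathcal{D}}$ has been split via $\overline{\mathcal{D}}=2\partial_{q_0}-\mathcal{D}$.
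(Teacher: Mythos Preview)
Your argument is correct. The decomposition $\overline{\mathcal{D}}=2\partial_{q_0}-\mathcal{D}$ together with Theorem~\ref{res2bis} reduces the task to the scalar derivative $\partial_{q_0}$, and your commutativity bookkeeping---$s$, $q_0$, and $\mathcal{Q}_{c,s}(q)$ all lie in the commutative slice $\mathbb{C}_{J_s}$, while $\overline{q}$ is never moved past $s$---is exactly right. The check that $(s-\overline q)s-q_0(s-\overline q)=(s-\overline q)(s-q_0)$ needs only the reality of $q_0$; the expansion via $\overline q=q_0-\underline q$ is not needed. Your observation that the right-hand formula should read $F_R$ rather than $F_L$ is also correct.

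The paper does not prove this statement here; it is quoted from \cite{DP}, where it is obtained by ``direct computations'' on the kernels. Your route is a genuine shortcut over a bare-hands application of $\overline{\mathcal{D}}$: by splitting off the $\mathcal{D}$-part, you recycle Theorem~\ref{res2bis} and are left with a one-variable real derivative for which the Leibniz rule holds without side-switching, so all the quaternionic non-commutativity is absorbed into a single cancellation. A direct computation, by contrast, has to track each $e_i\partial_{q_i}$ acting on $(s-\overline q)\mathcal{Q}_{c,s}(q)^{-1}$ and then reassemble the pieces; it reaches the same endpoint but with more algebra.
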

\begin{theorem}[Integral representation of axially polyanalytic functions of order $2$]
	\label{inrap}
	Let $W\subset \hh$ be an open set. Let $U$ be a slice Cauchy domain such that $\overline U\subset W$. Then for $J\in\mathbb S$ and $ds_J=ds(-J)$ we have 
	\begin{enumerate}
		\item if $f\in \mathcal{SH}_L(W)$, then the function $\breve f^0(q)=\overline\bigD f(q)$ is polyanalytic of order $2$ and it admits the following integral representation 
		\begin{equation}
			\label{star}
			\breve f^0(q)=-\frac 1{2\pi}\sum_{k=0}^1(-q_0)^k\int_{\partial(U\cap\cc_J)} F_L(s,q) s^{1-k} \, ds_J\, f(s)\quad \forall q\in U;
		\end{equation}
		\item if $f\in \mathcal{SH}_R(W)$, then the function $\breve f^0(q)= f(q) \overline\bigD$ is polyanalytic of order $2$ and it admits the following integral representation  
		\begin{equation}
			\label{start2}
			\breve f^0(q)=-\frac 1{2\pi}\sum_{k=0}^1(-q_0)^k\int_{\partial(U\cap\cc_J)} f(s) \, ds_J\,  s^{1-k} F_R(s,q)\quad \forall q\in U.
		\end{equation}
	\end{enumerate}
	The integrals depend neither on $U$ nor on the imaginary unit $J\in \mathbb S$.
\end{theorem}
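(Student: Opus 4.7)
The plan is to reduce this to a direct computation starting from the slice hyperholomorphic Cauchy formula of Theorem \ref{Cauchy}, by differentiating under the integral sign using the conjugate Fueter operator and then invoking Theorem \ref{res2} to rewrite $\overline{\mathcal{D}} S_L^{-1}(s,q)$ in terms of the $F$-kernel. Concretely, for $f\in\mathcal{SH}_L(W)$, since $\overline{U}\subset W$, the Cauchy formula gives
\begin{equation*}
f(q)=\frac{1}{2\pi}\int_{\partial(U\cap\mathbb{C}_J)} S_L^{-1}(s,q)\,ds_J\,f(s),\qquad q\in U.
\end{equation*}
The kernel $(s,q)\mapsto S_L^{-1}(s,q)$ is smooth in $q$ on the compact set $\partial(U\cap\mathbb{C}_J)\times K$ for any compact $K\subset U$ (since $[q]\cap\partial U=\emptyset$), so standard differentiation under the integral sign is legitimate and
\begin{equation*}
\breve f^0(q)=\overline{\mathcal{D}}f(q)=\frac{1}{2\pi}\int_{\partial(U\cap\mathbb{C}_J)} \overline{\mathcal{D}}S_L^{-1}(s,q)\,ds_J\,f(s).
\end{equation*}

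Next I would substitute the identity $\overline{\mathcal{D}}S_L^{-1}(s,q)=-F_L(s,q)s+q_0 F_L(s,q)$ from Theorem \ref{res2} into the integrand and split the integral into two pieces. Pulling the factor $q_0$ (independent of $s$) outside the integral yields
\begin{equation*}
\breve f^0(q)=-\frac{1}{2\pi}\int_{\partial(U\cap\mathbb{C}_J)} F_L(s,q)\,s\,ds_J\,f(s)+\frac{q_0}{2\pi}\int_{\partial(U\cap\mathbb{C}_J)} F_L(s,q)\,ds_J\,f(s),
\end{equation*}
which is precisely the $k=0$ and $k=1$ terms of the sum \eqref{star}. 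The polyanalyticity of order $2$ of $\breve f^0$ is built in: by the factorization $\mathcal{D}\overline{\mathcal{D}}=\Delta=T_{F2}$ and Theorem \ref{F1b}, the function $\mathcal{D}\breve f^0=\mathcal{D}\overline{\mathcal{D}}f=\Delta f$ is axially monogenic, hence $\mathcal{D}^2\breve f^0=0$, as noted in Definition \ref{axpol}. The right case \eqref{start2} follows by the symmetric argument using the right Cauchy formula \eqref{Cauchyright} and the right version of Theorem \ref{res2}.

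The main obstacle — or rather the only nontrivial check — is the claim that the integral does not depend on the slice Cauchy domain $U$ containing no singularity $[q]$ nor on the unit $J\in\mathbb{S}$. For this I would appeal to the Cauchy integral theorem (Theorem \ref{CIT}): the kernels $s\mapsto F_L(s,q)$ and $s\mapsto F_L(s,q)s$ are right slice hyperholomorphic in $s$ away from $[q]$ (the former by the properties of $F_L$ recalled after Definition \ref{fres}, and the latter because the function $s\mapsto s$ is intrinsic, and multiplication of a right slice hyperholomorphic function by an intrinsic slice hyperholomorphic function on the right preserves right slice hyperholomorphicity). Combined with $f\in\mathcal{SH}_L(W)$, Theorem \ref{CIT} applies to each of the two integrals separately, giving independence from $U$ and from the choice of $J$ by the same deformation argument used for the Cauchy and Fueter integral formulas.
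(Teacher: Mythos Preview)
Your argument is correct and is exactly the natural route: apply $\overline{\mathcal{D}}$ to the slice Cauchy formula, invoke Theorem~\ref{res2}, and split the resulting kernel. Note, however, that the paper does not actually prove Theorem~\ref{inrap}; it is recalled from the earlier work~\cite{DP}, so there is no in-paper proof to compare against. Your reasoning matches what one would expect that proof to be, and the independence from $U$ and $J$ can be obtained more directly than you indicate: each of the two integrals is already of the form covered by Theorem~\ref{Fueter} (with $g(s)=sf(s)$ and $g(s)=f(s)$, both left slice hyperholomorphic since $s\mapsto s$ is intrinsic), so the independence is inherited term by term without repeating the deformation argument.
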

In order to get an expansion in series of the $\mathcal{P}^L_2(s,q)$ and $ \mathcal{P}^R_2(s,q)$, in \cite{DP} we use the following result.
\begin{lemma}
	\label{res3b}
	For $n \geq 1$ we have
	\begin{equation}
		\label{F1}
		\mathcal{\overline{D}} q^n= 2 \left( n q^{n-1}+ \sum_{k=1}^n q^{n-k} \bar{q}^{k-1} \right).
	\end{equation}
	Moreover,
	\begin{equation}
		\label{comm}
		q^n\mathcal{\overline{D}}=\mathcal{\overline{D}} q^n.
	\end{equation}
\end{lemma}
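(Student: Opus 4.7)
The plan is to prove both identities by direct computation, without induction. The key algebraic ingredient is the Clifford-type identity
$$\sum_{i=1}^{3} e_i a e_i = -a - 2\overline{a} \qquad \text{for every } a \in \mathbb{H},$$
which I establish first by testing on the basis $\{1,e_1,e_2,e_3\}$: the relation $e_i^2 = -1$ yields $\sum_i e_i\cdot 1\cdot e_i = -3$, while for $j\in\{1,2,3\}$ the anti-commutation $e_ie_j=-e_je_i$ (for $i\ne j$) together with $e_j^3=-e_j$ gives $\sum_i e_i e_j e_i = -e_j + 2e_j = e_j$. Hence $\sum_i e_i a e_i = -3\Re(a) + \underline{a}$, which rewrites as $-a - 2\overline{a}$.

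With this identity at hand I turn to \eqref{F1}. The noncommutative product rule together with $\partial_{q_0} q = 1$, $\partial_{q_i} q = e_i$, and the fact that $q$ commutes with itself gives $\partial_{q_0} q^n = n q^{n-1}$ and $\partial_{q_i} q^n = \sum_{j=0}^{n-1} q^j e_i q^{n-1-j}$. Swapping summations and applying the Clifford identity with $a = q^j$, combined with the elementary identity $\overline{q^j} = \overline{q}^{\,j}$, collapses the inner sum:
$$\sum_{i=1}^{3} e_i \partial_{q_i} q^n = \sum_{j=0}^{n-1}\bigl(-q^j - 2\overline{q}^{\,j}\bigr) q^{n-1-j} = -n q^{n-1} - 2\sum_{j=0}^{n-1}\overline{q}^{\,j}q^{n-1-j}.$$
Using $q\overline{q}=\overline{q}q$ to reindex, $\sum_{j=0}^{n-1}\overline{q}^{\,j}q^{n-1-j} = \sum_{k=1}^{n} q^{n-k}\overline{q}^{\,k-1}$; subtracting from $\partial_{q_0} q^n = n q^{n-1}$ then delivers \eqref{F1}.

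For \eqref{comm} I would repeat the calculation with the $e_i$'s on the right: $q^n\mathcal{\overline{D}} = n q^{n-1} - \sum_{j=0}^{n-1} q^j\bigl(\sum_{i=1}^{3} e_i q^{n-1-j} e_i\bigr)$. Applying the Clifford identity to the inner sum and using $q\overline{q}=\overline{q}q$ to reindex as above produces exactly the right-hand side of \eqref{F1}, hence \eqref{comm}. The only subtlety throughout is the noncommutativity: $q$ commutes with $\overline{q}$ and with real scalars but not with the $e_i$'s, and the whole argument isolates that noncommutativity into the single identity for $\sum_i e_i a e_i$.
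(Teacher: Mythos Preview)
Your argument is correct and self-contained. The key identity $\sum_{i=1}^{3} e_i a e_i = -a - 2\overline{a}$ is verified correctly on the basis, the noncommutative Leibniz rule gives the right form for $\partial_{q_i}q^n$, and the reindexing via $q\overline{q}=\overline{q}q$ is legitimate; the same pattern with the $e_i$'s placed on the right yields \eqref{comm} exactly as you say.

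This is a genuinely different route from the paper's. The paper does not compute $\overline{\mathcal{D}}q^n$ directly; instead it first proves an auxiliary identity $\mathcal{D}q^n=\overline{\mathcal{D}}\,\overline{q}^{\,n}$ by induction (Lemma~\ref{aux1}), then invokes an external result of Begeher (Lemma~\ref{res4}) giving a closed form for $\mathcal{D}\overline{q}^{\,n}$, and finally conjugates: $\overline{\mathcal{D}}q^n=\overline{\mathcal{D}\overline{q}^{\,n}}$. Your approach is more elementary---no induction, no cited lemma---and isolates the only noncommutative content into the single scalar-level identity for $\sum_i e_i a e_i$; it also handles \eqref{comm} by the same mechanism, whereas the paper's written proof addresses only \eqref{F1}. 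The paper's route, on the other hand, exhibits a structural relation between $\mathcal{D}$ and $\overline{\mathcal{D}}$ under quaternionic conjugation that your direct computation does not make visible.
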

In this paper we will show that we can prove Lemma \ref{res3b} by using the following result,  see \cite[Lemma 1]{B}.
\begin{lemma}
	\label{res4}
	Let $n \geq 2$ then we have
	\begin{equation}
		\mathcal{D}\overline{q}^n=(2n+2)\bar{q}^{n-1}+2q \sum_{k=0}^{n-2} \bar{q}^{n-k-2} q^k.
	\end{equation}
\end{lemma}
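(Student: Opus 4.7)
The plan is induction on $n$, the base case $n=2$ being handled by direct computation and the inductive step built from a Leibniz-type recursion.

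For the base case, expanding $\bar q^2 = q_0^2 - 2q_0\underline q - |\underline q|^2$ and computing $\partial_{q_0}\bar q^2 = 2\bar q$ together with $\sum_{i=1}^{3} e_i\partial_{q_i}\bar q^2 = 6q_0 - 2\underline q$ gives $\mathcal D\bar q^2 = 6\bar q + 2q$, matching $(2\cdot 2+2)\bar q^{1}+2q\,\bar q^{0}q^{0}$. For the inductive step, note that $\mathcal D$ satisfies the left-coefficient Leibniz rule $\mathcal D(fg)=(\mathcal D f)g+f\,\partial_{q_0}g+\sum_{i=1}^{3}e_i f\,\partial_{q_i}g$. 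Applying it with $f=\bar q^n$ and $g=\bar q$, and using $\partial_{q_0}\bar q=1$, $\partial_{q_i}\bar q=-e_i$, yields
\begin{equation*}
\mathcal D\bar q^{n+1}=(\mathcal D\bar q^n)\bar q+\bar q^n-\sum_{i=1}^{3}e_i\bar q^n e_i.
\end{equation*}
To evaluate the awkward sum, invoke the quaternionic identity $\sum_{i=1}^{3}e_i a e_i = -a-2\bar a$, which is verified by checking it on the basis $\{1,e_1,e_2,e_3\}$ and extending by $\mathbb R$-linearity. Since $\overline{\bar q^n}=q^n$, this reduces the recursion to
\begin{equation*}
\mathcal D\bar q^{n+1}=(\mathcal D\bar q^n)\bar q+2\bar q^n+2q^n.
\end{equation*}

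Plugging in the inductive hypothesis and using that $q$ commutes with $\bar q$ (so $q^k\bar q=\bar q\, q^k$), one finds $(\mathcal D\bar q^n)\bar q=(2n+2)\bar q^{n}+2q\sum_{k=0}^{n-2}\bar q^{n-k-1}q^k$. Adding the contribution $2\bar q^n+2q^n$ and recognizing $2q^n=2q\,\bar q^{0}\,q^{n-1}$ as precisely the missing $k=n-1$ summand extends the range to $\{0,\dots,n-1\}$ and produces $(2n+4)\bar q^{n}+2q\sum_{k=0}^{n-1}\bar q^{n-k-1}q^k$, which is exactly the stated formula at $n+1$.

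The main obstacle is the term $\sum_{i=1}^{3}e_i\bar q^n e_i$: without the Clifford-type identity $\sum e_i a e_i=-a-2\bar a$, the noncommutativity of $\mathbb{H}$ blocks any direct simplification of this expression. Once that identity is in place, the remaining algebra is routine bookkeeping, made particularly clean by the commutation $q\bar q=\bar q q=|q|^2$.
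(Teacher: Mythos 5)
Your proof is correct. Note that the paper does not actually prove this lemma: it is quoted verbatim from Begehr's work (cited as \cite[Lemma 1]{B}), so there is no in-paper argument to compare against; your induction supplies a self-contained verification that the paper omits. The two pillars of your argument both check out. The noncommutative Leibniz rule $\mathcal D(fg)=(\mathcal D f)g+f\,\partial_{q_0}g+\sum_{i=1}^{3}e_i f\,\partial_{q_i}g$ is valid because the scalar partials obey the ordinary product rule and only the left factor $e_i$ fails to commute past $f$; and the Clifford identity $\sum_{i=1}^{3}e_i a e_i=-a-2\bar a$ is easily confirmed on the basis ($-3a_0$ for the real part, $+a_ie_i$ for each imaginary component). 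The base case $\mathcal D\bar q^2=8q_0-4\underline q=6\bar q+2q$ and the index bookkeeping in the inductive step (absorbing $2q^n$ as the $k=n-1$ summand after commuting $q^k\bar q=\bar q q^k$) are all accurate. One stylistic remark: the paper's surrounding machinery (Lemma \ref{aux1}, giving $\mathcal D q^n=\overline{\mathcal D}\,\bar q^n$, and the expansion $\bar q^{n+1}=\bar q^n(\bar q+q)-\bar q^{n-1}|q|^2$ used in its proof) suggests an alternative recursion that avoids the conjugation identity entirely, but your route via $\sum_i e_i a e_i$ is arguably more direct and is the standard way this lemma is established in the Clifford-analysis literature.
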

Before to show Lemma \ref{res3b} we need an auxiliary result.
\begin{lemma}
	\label{aux1}
	For $n \geq 1$ and $q \in \mathbb{H}$ we have
	$$\mathcal{D} q^n= \overline{\mathcal{D}} \overline{q}^n. $$
	\begin{proof}
		We show the formula by induction on $n$. For $n=1$ it is trivial. Now, we suppose that it holds for $n$ and we prove it for $n+1$. Since 
		$$ \bar{q}^{n+1}= \bar{q}^n( \bar{q}+q)- \bar{q}^{n-1}|q|^2,$$
		by the inductive hypothesis we have
		\begin{eqnarray*}
			\overline{\mathcal{D}}\bar{q}^{n+1}&=&\mathcal{\overline{D}}(\bar{q}^n)(\overline q+q)+2 \bar{q}^n - \overline{\mathcal{D}}(\bar{q}^{n-1}) |q|^2-2 \bar{q}^{n}\\
			&=& \mathcal{\overline{D}}(\bar{q}^n)(\overline q+q)- \overline{\mathcal{D}}(\bar{q}^{n-1}) |q|^2\\
			&=&  \mathcal{D}(q^n)(q+ \bar{q})- \mathcal{D}(q^{n-1})|q|^2\\
			&=& \mathcal{D}q^{n+1}.
		\end{eqnarray*}
	\end{proof}
\end{lemma}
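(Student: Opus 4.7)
The plan is to proceed by induction on $n$, following the structure of the algebraic recurrence
$$\overline{q}^{\,n+1}=\overline{q}^{\,n}(q+\overline{q})-\overline{q}^{\,n-1}|q|^2,$$
which holds because $q$ and $\overline{q}$ commute (so $|q|^2=q\overline{q}=\overline{q}q$). The base case $n=1$ is immediate: a direct computation gives $\mathcal{D}q=1+\sum_{i=1}^3 e_i^2=-2$ and, by the same mechanism with the signs flipped on both the operator and the variable, $\overline{\mathcal{D}}\,\overline{q}=-2$.

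For the inductive step I would apply $\overline{\mathcal{D}}$ to the above recurrence. The key point is that the only nonreal factor on the right-hand side is a power of $\overline{q}$, since $q+\overline{q}=2q_0$ and $|q|^2$ are \emph{real-valued}. This makes available the Leibniz-type identity
$$\overline{\mathcal{D}}(f\alpha)=\bigl(\overline{\mathcal{D}}f\bigr)\alpha+f\,\partial_{q_0}\alpha-\sum_{i=1}^3 e_if\,\partial_{q_i}\alpha,$$
valid whenever $\alpha$ is real-valued (real scalars commute with the $e_i$). Taking $\alpha=q+\overline{q}$ produces a correction term $+2f$, while taking $\alpha=|q|^2$ produces a correction term $+2\overline{q}f$ (using $q_0-\sum e_iq_i=\overline{q}$). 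Substituting $f=\overline{q}^{\,n}$ in the first and $f=\overline{q}^{\,n-1}$ in the second, the two extra contributions are $+2\overline{q}^{\,n}$ and $-2\overline{q}^{\,n}$, and they cancel. What survives is the clean recursion
$$\overline{\mathcal{D}}\,\overline{q}^{\,n+1}=\overline{\mathcal{D}}(\overline{q}^{\,n})(q+\overline{q})-\overline{\mathcal{D}}(\overline{q}^{\,n-1})|q|^2.$$

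The identical manipulation applied to $q^{n+1}=q^{n}(q+\overline{q})-q^{n-1}|q|^2$ with the operator $\mathcal{D}$ (now with $+\sum e_if\,\partial_{q_i}\alpha$ in the Leibniz rule, so the correction at $\alpha=|q|^2$ becomes $+2qf$) yields $\mathcal{D}q^{n+1}=\mathcal{D}(q^{n})(q+\overline{q})-\mathcal{D}(q^{n-1})|q|^2$. Invoking the inductive hypothesis at indices $n$ and $n-1$ identifies the two right-hand sides, and the induction closes.

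The step I expect to be the main obstacle is the sign/position bookkeeping for the noncommutative Leibniz rule: one must be careful that the correction term coming from $\overline{\mathcal{D}}$ on $|q|^2$ collapses exactly to $2\overline{q}\cdot\overline{q}^{\,n-1}=2\overline{q}^{\,n}$ (and not, say, to $2\overline{q}^{\,n-1}\overline{q}$, which equals the same thing only because $\overline{q}$ commutes with its own powers), and similarly that the corresponding term on the $\mathcal{D}$ side is $2q\cdot q^{n-1}=2q^{n}$. The elegance of the argument is that the recurrence is precisely engineered so that these two correction terms cancel after subtraction, leaving behind a recursion that is identical in form for $\mathcal{D}q^{n}$ and $\overline{\mathcal{D}}\,\overline{q}^{\,n}$.
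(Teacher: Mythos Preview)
Your proof is correct and follows essentially the same approach as the paper: induction via the recurrence $\overline{q}^{\,n+1}=\overline{q}^{\,n}(q+\overline{q})-\overline{q}^{\,n-1}|q|^2$, applying the operator and observing that the correction terms $\pm 2\overline{q}^{\,n}$ cancel. If anything, you are more explicit than the paper, which compresses the last step $\mathcal{D}(q^n)(q+\overline{q})-\mathcal{D}(q^{n-1})|q|^2=\mathcal{D}q^{n+1}$ into a single line without spelling out the parallel Leibniz computation on the $\mathcal{D}$ side; your version makes clear that the same cancellation mechanism is at work there. One small remark: since the induction step uses the hypothesis at both $n$ and $n-1$, strictly speaking you are doing strong induction and should note the trivial case $n=0$ (both sides vanish on constants) alongside $n=1$; the paper has the same implicit gap.
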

\begin{proof}[Proof of Lemma \ref{res3b}]
	By Lemma \ref{aux1} we can write
	$$ \overline{\mathcal{D} \bar{q}^n}=\mathcal{\overline{D}} q^n.$$
	Finally, by Lemma \ref{res4}, we get
	\begin{eqnarray*}
		\mathcal{\overline{D}} q^n=\overline{\mathcal{D} \bar{q}^n}&=& \overline{(2n+2)\bar{q}^{n-1}+2q \sum_{k=0}^{n-2} \bar{q}^{n-2-k} q^k}\\
		&=& 2 \left((n+1)q^{n-1}+\sum_{k=0}^{n-2} q^{n-2-k} \bar{q}^{k+1}\right)\\
		&=& 2 \left((n+1)q^{n-1}+\sum_{k=2}^{n} q^{n-k} \bar{q}^{k-1}\right)\\
		&=& 2 \left( n q^{n-1}+ \sum_{k=1}^n q^{n-k} \bar{q}^{k-1} \right).
	\end{eqnarray*}
\end{proof}
In \cite{DP}, we showed that Lemma \ref{res3b} was crucial to get an expansion in series of $ \mathcal{P}_L(s,q)$ and $ \mathcal{P}_R(s,q)$.
\begin{lemma}
	For $q$, $s \in \mathbb{H}$ such that $|q| < |s|$, we have
	$$ \mathcal{P}^L_2(s,q)= 2 \sum_{n=1}^\infty  \left(n q^{n-1} + \sum_{j=1}^{n}  q^{n-j} \bar{q}^{j-1}\right)s^{-1-n},$$
	and
	$$ \mathcal{P}^R_2(s,q)= 2 \sum_{n=1}^\infty s^{-1-n} \left(n q^{n-1} + \sum_{j=1}^{n}  q^{n-j} \bar{q}^{j-1}\right).$$
\end{lemma}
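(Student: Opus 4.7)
The plan is to apply $\overline{\mathcal{D}}$ termwise to the standard series expansions of $S^{-1}_L(s,q)$ and $S^{-1}_R(s,q)$, and then invoke Lemma \ref{res3b} to rewrite each term in closed form. More precisely, recalling that for $|q|<|s|$ one has the convergent expansions
$$
S^{-1}_L(s,q)=\sum_{n=0}^{\infty} q^n s^{-1-n},\qquad S^{-1}_R(s,q)=\sum_{n=0}^{\infty} s^{-1-n} q^n,
$$
I would first observe that both series converge absolutely and uniformly on compact subsets of $\{(q,s):|q|<|s|\}$, so termwise application of the differential operator $\overline{\mathcal{D}}$ is permitted.

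For the left kernel, by Theorem \ref{res2} and linearity,
$$
\mathcal{P}^L_2(s,q)=\overline{\mathcal{D}} S^{-1}_L(s,q)=\sum_{n=0}^{\infty}\bigl(\overline{\mathcal{D}} q^n\bigr)s^{-1-n}.
$$
The $n=0$ term vanishes, and for $n\geq 1$ the identity \eqref{F1} of Lemma \ref{res3b} gives
$$
\overline{\mathcal{D}} q^n=2\left(n q^{n-1}+\sum_{j=1}^{n} q^{n-j}\bar q^{j-1}\right),
$$
which yields the first claimed formula upon substitution. For the right kernel, I would use the commutativity relation \eqref{comm}, namely $q^n\overline{\mathcal{D}}=\overline{\mathcal{D}} q^n$, to push $\overline{\mathcal{D}}$ past the scalar factor $s^{-1-n}$ in the right expansion and reduce to exactly the same computation:
$$
\mathcal{P}^R_2(s,q)=S^{-1}_R(s,q)\overline{\mathcal{D}}=\sum_{n=0}^{\infty} s^{-1-n}\bigl(q^n\overline{\mathcal{D}}\bigr)=\sum_{n=1}^{\infty} s^{-1-n}\,\overline{\mathcal{D}} q^n,
$$
and another application of \eqref{F1} yields the second formula.

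No step is a genuine obstacle: the only point requiring mild care is justifying the interchange of $\overline{\mathcal{D}}$ with the infinite sum, which follows from the uniform convergence of the Cauchy-kernel series (together with that of its term-by-term derivative, which is dominated on compacta by a geometric tail) on the region $|q|<|s|$. Once that is in place, the proof is essentially a one-line application of Lemma \ref{res3b}.
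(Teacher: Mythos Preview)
Your proposal is correct and is essentially the approach the paper indicates: the lemma is stated here without proof (it is attributed to \cite{DP}), but the surrounding text makes explicit that Lemma~\ref{res3b} is ``crucial to get an expansion in series'' of the kernels, i.e., one applies $\overline{\mathcal D}$ termwise to the Cauchy-kernel series and uses \eqref{F1} and \eqref{comm}. Your justification of the termwise differentiation via uniform convergence on compacta is the only additional care needed.
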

Now, we have all the tools to introduce the $P_2$-functional calculus.
\begin{definition}\label{dbars}
	Let $T=T_0+\sum_{i=1}^3e_i T_i\in\mathcal B\mathcal C(X)$, $s\in\mathbb H$, we define the left $\overline{ \mathcal D}$-kernel operator as
	$$\mathcal{P}^L_2(s,T)=-F_L(s,T)s+T_0F_L(s,T)$$
	and the right  $\overline{ \mathcal D}$-kernel operator as
	$$\mathcal{P}^R_2(s,T)= -sF_R(s,T)+T_0F_R(s,T).$$
\end{definition}
\begin{definition}[Polyanalytic functional calculus of order $2$ on the $S$-spectrum]
	\label{qfun}
	Let $T \in \mathcal{BC}(X)$ and set $ds_J=ds(-J)$ for $J \in \mathbb{S}$. For every function $\breve{f}^\circ=\overline{\mathcal{D}} f$ with $f \in SH_{L}(\sigma_S(T))$, we set
	\begin{equation}
		\label{inte1}
		\breve{f}^\circ(T):=  \frac{1}{2\pi} \int_{\partial(U \cap \mathbb{C}_J)} \mathcal P^L_2(s,T) ds_J f(s),
	\end{equation}
	where $U$ is an arbitrary bounded slice Cauchy domain with $\sigma_{S}(T) \subset U$ and $ \overline{U} \subset dom(f)$. Moreover, $J \in \mathbb{S}$ is an arbitrary imaginary unit.
	\\ For every function $\breve{f}^\circ= f\overline{\mathcal{D}}$ with $f \in SH_{R}(\sigma_S(T))$, we set
	\begin{equation}
		\label{inte2}
		\breve{f}^\circ(T):= \frac{1}{2\pi} \int_{\partial (U \cap \mathbb{C}_J)} f(s) ds_J \mathcal P^R_2(s,T),
	\end{equation}
	where $U$ and $J$ are as above.
\end{definition} 
In \cite{DP} the authors showed that it is possible to write the right and the left $\overline{\mathcal D}$-kernels operators in terms of $T$ and $\bar T$.

\begin{proposition}\label{p2}
	Let $T=T_0+\sum_{i=1}^3 e_iT_i\in \mathcal{BC}(X)$, $s\in\hh$ and $\|T\|<|s|$, we can write the left $\overline{ \mathcal D}$-kernel operator as
	\begin{equation}\label{sl2}
		\mathcal P^L_2(s,T)=2\sum_{n=1}^\infty\left(nT^{n-1}+\sum_{k=1}^n T^{n-k}\bar T^{k-1}\right)s^{-1-n},
	\end{equation}
	and the right $\overline{ \mathcal D}$-kernel operator as
	\begin{equation}\label{sr2}
		\mathcal P^R_2(s,T)=2\sum_{n=1}^\infty s^{-1-n} \left(nT^{n-1}+\sum_{k=1}^nT^{n-k}\bar T^{k-1}\right).
	\end{equation} 
\end{proposition}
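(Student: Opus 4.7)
The plan is to establish the operator series \eqref{sl2} by first computing a power-series expansion of $F_L(s,T)$ in $s^{-1}$ and then substituting it into the defining relation $\mathcal P^L_2(s,T)=-F_L(s,T)\,s+T_0F_L(s,T)$; formula \eqref{sr2} then follows from the analogous computation applied to $F_R(s,T)$ and $\mathcal P^R_2(s,T)=-sF_R(s,T)+T_0F_R(s,T)$.

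The algebraic input is the expansion
\[\mathcal Q_{c,s}(T)^{-1}=\sum_{n\geq 1}\Bigl(\sum_{k=1}^n T^{n-k}\bar T^{k-1}\Bigr)s^{-n-1}\qquad(\|T\|<|s|),\]
obtained from the linear recurrence $\alpha_n=2T_0\alpha_{n-1}-T\bar T\alpha_{n-2}$ (with $\alpha_0=\mathcal I$, $\alpha_1=2T_0$) arising from $\mathcal Q_{c,s}(T)\mathcal Q_{c,s}(T)^{-1}=\mathcal I$; the closed form $\alpha_n=\sum_{k=0}^n T^{n-k}\bar T^k$ is verified by induction using $T\bar T=\bar TT$ (since $T\in\mathcal{BC}(X)$) and $T_0=(T+\bar T)/2$. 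Combining this with $F_L(s,T)=-4 S_L^{-1}(s,T)\mathcal Q_{c,s}(T)^{-1}$ (immediate from $S_L^{-1}(s,T)=(s\mathcal I-\bar T)\mathcal Q_{c,s}(T)^{-1}$) and the Neumann series $S_L^{-1}(s,T)=\sum_{m\geq 0}T^m s^{-m-1}$, the Cauchy product with the reindexing $j=m+1$ yields
\[F_L(s,T)=-4\sum_{N\geq 2}\Bigl(\sum_{j=1}^{N-1}jT^{j-1}\bar T^{N-1-j}\Bigr)s^{-N-1}.\]

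This expansion is then substituted into $\mathcal P^L_2(s,T)$. The piece $-F_L(s,T)\,s$ shifts the exponent from $s^{-N-1}$ to $s^{-N}$; reindexing with $m=N-1$ it contributes $4\sum_{m\geq 1}\bigl(\sum_{j=1}^m jT^{j-1}\bar T^{m-j}\bigr)s^{-m-1}$. For $T_0F_L(s,T)$, the identity $T_0T^{j-1}\bar T^{N-1-j}=\tfrac12(T^j\bar T^{N-1-j}+T^{j-1}\bar T^{N-j})$ (valid since $T_0=(T+\bar T)/2$ commutes with $T$ and $\bar T$) produces a second series contribution. Adding both and simplifying, the coefficient of $s^{-m-1}$ reduces to $(2m+2)T^{m-1}+2\sum_{j=1}^{m-1}T^{j-1}\bar T^{m-j}$, which by $T\bar T=\bar TT$ and the substitution $k=m-j+1$ equals $2\bigl(mT^{m-1}+\sum_{k=1}^m T^{m-k}\bar T^{k-1}\bigr)$, matching \eqref{sl2} term by term.

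The main obstacle is the noncommutativity between the quaternion scalar $s$ and the operators $T,\bar T$; it is handled by exploiting that $\mathcal Q_{c,s}(T)=s^2\mathcal I-2sT_0+T\bar T$ has real-operator coefficients $T_0$ and $T\bar T$ (so $\mathcal Q_{c,s}(T)$ and all its inverses and powers commute with left multiplication by $s$), and by remaining throughout in the regime $\|T\|<|s|$, where absolute convergence in operator norm justifies the Cauchy products, term-by-term rearrangements, and reindexings used above.
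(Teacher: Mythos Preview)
Your proof is correct. The paper does not prove Proposition~\ref{p2} directly but refers to \cite{DP}; from the surrounding material it is clear that the intended argument proceeds differently: one first establishes the scalar identity $\overline{\mathcal D}q^n=2\bigl(nq^{n-1}+\sum_{k=1}^n q^{n-k}\bar q^{k-1}\bigr)$ (Lemma~\ref{res3b}), applies $\overline{\mathcal D}$ term by term to the Neumann series $S_L^{-1}(s,q)=\sum_{n\ge0}q^ns^{-n-1}$ to obtain the expansion of $\mathcal P_2^L(s,q)$, and then passes from $q$ to $T\in\mathcal{BC}(X)$ by the usual substitution (legitimate because $T$ and $\bar T$ commute just as $q$ and $\bar q$ do).

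Your route is purely algebraic at the operator level: you never invoke the differential formula for $\overline{\mathcal D}q^n$, but instead expand $F_L(s,T)$ via the factorization $F_L(s,T)=-4S_L^{-1}(s,T)\mathcal Q_{c,s}(T)^{-1}$, compute the Cauchy product (correctly using that the coefficients $\alpha_k=\sum_{j=0}^kT^{k-j}\bar T^j$ are self-conjugate, hence real, hence commute with $s$), and then feed the result into the defining relation for $\mathcal P_2^L(s,T)$. This is a genuinely different derivation. Its advantage is self-containment: it avoids the function-theoretic detour through $\overline{\mathcal D}$ and Lemma~\ref{res3b}, and makes the role of the commuting-components hypothesis ($T\bar T=\bar TT$, $T_0=(T+\bar T)/2$ real) completely transparent. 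The paper's route, on the other hand, explains \emph{why} the coefficients $nT^{n-1}+\sum_kT^{n-k}\bar T^{k-1}$ have this particular form --- they are exactly $\tfrac12\overline{\mathcal D}q^n$ evaluated at $q=T$ --- which is conceptually illuminating given that $\mathcal P_2^L$ is designed as the $\overline{\mathcal D}$-image of the Cauchy kernel.
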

Finally, we study the regularity of the $ \mathcal{\overline{D}}$-kernel operators.
\begin{lemma}
	Let $T\in\mathcal{BC}(X)$. The left (resp. right) $\overline {\mathcal D}$-resolvent operator $\mathcal P^L_2(s,T)$ (resp. $\mathcal P^R_2(s,T)$), is a $\mathcal B(X)$-valued right (resp. left) slice hyperholomorphic function of the variable $s$ in $\rho_S(T)$. 
\end{lemma}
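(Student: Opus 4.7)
The plan is to exploit the explicit factorization from Definition \ref{dbars},
$$\mathcal{P}^L_2(s,T)=-F_L(s,T)\,s+T_0\,F_L(s,T), \qquad \mathcal{P}^R_2(s,T)=-s\,F_R(s,T)+T_0\,F_R(s,T),$$
together with the already established hyperholomorphicity of the $F$-resolvent operators in the variable $s$: $F_L(s,T)$ is right slice hyperholomorphic in $s$ on $\rho_S(T)$ and $F_R(s,T)$ is left slice hyperholomorphic in $s$ on $\rho_S(T)$. The latter is standard for the $F$-functional calculus (see \cite{CGKBOOK}) and can be verified directly from the explicit formulas $F_L(s,T)=-4(s\mathcal I-\overline T)\mathcal Q_{c,s}(T)^{-2}$ and $F_R(s,T)=-4\mathcal Q_{c,s}(T)^{-2}(s\mathcal I-\overline T)$ by fixing a slice $\mathbb C_J$, writing $s=u+Jv$, and checking the slice Cauchy-Riemann system componentwise in the operator norm.

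The key step is then to show that the class of right slice hyperholomorphic $\mathcal B(X)$-valued functions of $s$ is closed under (i) right multiplication by the intrinsic function $s\mapsto s$, and (ii) left multiplication by the real-linear operator $T_0$. For (i), writing $g(s)=\alpha(u,v)+\beta(u,v)J$ with $\alpha(u,-v)=\alpha(u,v)$, $\beta(u,-v)=-\beta(u,v)$, a direct computation yields
$$g(s)\,s=\bigl(u\alpha(u,v)-v\beta(u,v)\bigr)+\bigl(v\alpha(u,v)+u\beta(u,v)\bigr)J,$$
and the new pair $(\tilde\alpha,\tilde\beta)=(u\alpha-v\beta,\,v\alpha+u\beta)$ is readily checked to satisfy both the slice Cauchy-Riemann system and the even/odd symmetries of Definition \ref{hyper}. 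For (ii), $T_0$ commutes with the real partial derivatives $\partial_u,\partial_v$ and with every imaginary unit $J\in\mathbb S$, so it passes through the CR system componentwise and left multiplication by $T_0$ preserves right slice hyperholomorphicity. Combining (i), (ii) with linearity, we conclude that $\mathcal P^L_2(s,T)$ is right slice hyperholomorphic on $\rho_S(T)$. The argument for $\mathcal P^R_2(s,T)$ is symmetric: $F_R(s,T)$ is left slice hyperholomorphic in $s$, left multiplication of a left slice hyperholomorphic $h(s)=\alpha+J\beta$ by $s$ gives $s\,h(s)=(u\alpha-v\beta)+J(v\alpha+u\beta)$, which again satisfies the required system and symmetries, and left multiplication by $T_0$ preserves left slice hyperholomorphicity for the same reason.

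The only delicate point — more bookkeeping than obstacle — is to fix once and for all the operator-norm definition of slice hyperholomorphicity for $\mathcal B(X)$-valued functions, and to justify that the Cauchy-Riemann equations for the quaternionic components of $F_L(s,T)$ and $F_R(s,T)$ lift to the operator level. Once this is in place, the three ingredients above combine routinely into the statement. As an alternative sanity check, for $|s|>\|T\|$ the series expansions \eqref{sl2} and \eqref{sr2} directly exhibit $\mathcal P^L_2(s,T)$ and $\mathcal P^R_2(s,T)$ as convergent series in the elementary right/left slice hyperholomorphic monomials $s^{-1-n}$, confirming the result on that subset of $\rho_S(T)$.
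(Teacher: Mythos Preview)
Your argument is correct. The paper does not actually prove this lemma here; it is stated as a result recalled from \cite{DP}, so there is no in-text proof to compare against. Your approach---reducing to the known right (resp.\ left) slice hyperholomorphicity of $F_L(s,T)$ (resp.\ $F_R(s,T)$) via the decomposition in Definition~\ref{dbars}, and then checking that right multiplication by the intrinsic function $s$ and left multiplication by the real component $T_0$ preserve the relevant slice hyperholomorphicity---is the natural one and goes through as you describe. The verification that $(\tilde\alpha,\tilde\beta)=(u\alpha-v\beta,\,v\alpha+u\beta)$ satisfies both the Cauchy--Riemann system and the even/odd conditions is routine, and the fact that $T_0\in\mathcal{B}(X_{\mathbb R})$ commutes with left and right quaternionic multiplication on $X=X_{\mathbb R}\otimes\mathbb H$ is exactly what makes step (ii) work. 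The series check you mention at the end is a pleasant confirmation on $\{|s|>\|T\|\}$ but is not needed for the full statement on $\rho_S(T)$.
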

We recall and prove the following result for the sake of completeness.

\begin{theorem}
The $P_2$-functional calculus on the $S$-spectrum is well-defined, i.e., the integrals in \eqref{inte1} and \eqref{inte2} depend neither on the imaginary unit $J \in \mathbb{S}$ nor on the slice Cauchy domain $U$.
\end{theorem}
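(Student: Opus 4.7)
The plan is to reduce the well-posedness of the $P_2$-functional calculus to the already-established well-posedness of the $F$-functional calculus (the integrals in \eqref{DefFCLUb} and \eqref{SCalcMON}). The key observation is that, by Definition \ref{dbars}, the $\overline{\mathcal D}$-kernel operator is built linearly from $F_L(s,T)$ and multiplication by the scalar $s$, so the whole integral decomposes into two $F$-integrals applied to slice hyperholomorphic functions.

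For \eqref{inte1}, substitute $\mathcal{P}^L_2(s,T)=-F_L(s,T)s+T_0F_L(s,T)$ into the defining integral. On the contour $\partial(U\cap\mathbb{C}_J)$, the variable $s$ lies in the commutative plane $\mathbb{C}_J$ and therefore commutes with $ds_J$, while $T_0$ is a real scalar operator commuting with everything. Rearranging factors we obtain
\begin{equation*}
\breve{f}^\circ(T)=-\frac{1}{2\pi}\int_{\partial(U\cap\mathbb{C}_J)}F_L(s,T)\,ds_J\,(sf(s))+T_0\cdot\frac{1}{2\pi}\int_{\partial(U\cap\mathbb{C}_J)}F_L(s,T)\,ds_J\,f(s).
\end{equation*}
A direct verification from Definition \ref{hyper} (or from the observation that $s\mapsto s$ is intrinsic, so multiplication by it preserves the Cauchy–Riemann system and the evenness/oddness \eqref{eveodd}) shows that $g(s):=sf(s)$ belongs to $SH_L(\sigma_S(T))$ on the same domain as $f$. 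Hence each of the two integrals on the right-hand side is an instance of the left $F$-functional calculus applied to a left slice hyperholomorphic function on the same slice Cauchy domain $U$ and imaginary unit $J$. Invoking the well-posedness of the $F$-functional calculus (valid for both integrals), the two integrals depend neither on $U$ nor on $J$, and consequently neither does $\breve{f}^\circ(T)$.

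The argument for \eqref{inte2} is completely symmetric: starting from $\mathcal{P}^R_2(s,T)=-sF_R(s,T)+T_0F_R(s,T)$, one writes
\begin{equation*}
\breve{f}^\circ(T)=-\frac{1}{2\pi}\int_{\partial(U\cap\mathbb{C}_J)}(f(s)s)\,ds_J\,F_R(s,T)+T_0\cdot\frac{1}{2\pi}\int_{\partial(U\cap\mathbb{C}_J)}f(s)\,ds_J\,F_R(s,T),
\end{equation*}
checks that $s\mapsto f(s)s$ is in $SH_R(\sigma_S(T))$, and concludes by the well-posedness of the right $F$-functional calculus. The only mild obstacle in the whole proof is the non-commutative bookkeeping when moving $s$, $T_0$ and $ds_J$ past the $F$-kernel; this is handled by points (i)–(iii) above. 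Should one prefer a more self-contained route, one can alternatively argue directly: $\mathcal{P}^L_2(s,T)$ is right slice hyperholomorphic in $s$ on $\rho_S(T)$ by the lemma immediately preceding the statement, and $f$ is left slice hyperholomorphic, so Theorem \ref{CIT} together with the standard slice-domain deformation argument yields independence of both $U$ and $J$; but the reduction above is shorter and makes the result an immediate corollary of the $F$-calculus theory.
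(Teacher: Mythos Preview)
Your reduction to the $F$-functional calculus is correct and is a genuinely different route from the one the paper takes. The paper argues directly: it invokes the right slice hyperholomorphicity of $\mathcal P^L_2(s,T)$ in $s$ (established in the lemma just before the theorem), uses the Cauchy integral theorem to get independence of $U$, and then for independence of $J$ writes $\mathcal P^L_2(s,T)$ itself via the Cauchy formula on a second slice $\mathbb C_I$, swaps the order of integration by Fubini, and collapses the inner integral. This is exactly the ``self-contained'' alternative you sketch at the end.

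Your main argument instead exploits the linear decomposition $\mathcal P^L_2(s,T)=-F_L(s,T)s+T_0F_L(s,T)$ to rewrite the $P_2$-integral as a fixed $\mathcal B(X)$-linear combination of two $F$-integrals, applied to $sf(s)$ and $f(s)$, and then quotes the well-posedness of the $F$-calculus. The bookkeeping is sound: $s$ commutes with $ds_J$ on $\partial(U\cap\mathbb C_J)$, $T_0$ is a real component commuting with $F_L(s,T)$ and can be pulled through the integral, and $s\mapsto sf(s)$ is left slice hyperholomorphic because $s\mapsto s$ is intrinsic. One small caveat: in this paper the $F$-functional calculus is stated for operators with $T_0=0$, while the $P_2$-calculus allows $T_0\neq 0$; the independence of $\int F_L(s,T)\,ds_J\,g(s)$ from $U$ and $J$ actually only uses that $F_L(\,\cdot\,,T)$ is $\mathcal B(X)$-valued right slice hyperholomorphic on $\rho_S(T)$, which holds for every $T\in\mathcal{BC}(X)$, so you should either remark this or cite the general version from \cite{CGKBOOK}. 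With that adjustment your argument is complete and shorter than the paper's; the paper's version, in turn, has the advantage of being self-contained and of exhibiting the mechanism (Cauchy formula plus Fubini) that underlies all of these well-posedness statements.
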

\begin{proof}
	Here we show only the case $ \breve{f}^\circ=\overline{\mathcal{D}}f$ with $f \in SH_L(\sigma_S(T))$, since the other one follows by analogous arguments.
	\\ Since $ \mathcal P^L_2(s,T) $ is a $B(X)$-valued right slice hyperholomorphic function in $s$ and $f$ is left slice hyperholomorphic, the independence from the set $U$ follows by the Cauchy integral formula, see Theorem \ref{Cauchy} and Theorem \ref{CIT}.
	\\ Now, we want to show the independence from the imaginary unit.	
	Let us consider two imaginary units $J$, $I \in \mathbb{S}$ with $J \neq I$ and two bounded slice Cauchy domains $U_q$, $U_s$ with $ \sigma_{S}(T) \subset U_q$, $\overline{U}_q \subset U_s$ and $\overline{U}_s \subset dom(f)$.
	Then every $s \in \partial (U_s \cap \mathbb{C}_J)$ belongs to the unbounded slice Cauchy domain $\mathbb{H}\setminus  U_q $.
	Recall that $\mathcal P^L_2(q,T) $ is right slice hyperholomorphic on $\rho_S(T)$, also at infinity, since $ \lim_{q \to + \infty} \mathcal P^L_2(q,T)=0$. Thus the Cauchy formula implies
	\begin{eqnarray}
		\nonumber
		\mathcal P^L_2(s,T) &=& \frac{1}{2 \pi} \int_{\partial \left((\mathbb{H} \setminus U_q) \cap \mathbb{C}_I \right)} \mathcal P^L_2(q,T)  dq_I S^{-1}_R(q,s)\\
		\label{inte3}
		&=& \frac{1}{2 \pi} \int_{\partial (U_q \cap \mathbb{C}_I)} \mathcal P^L_2(q,T)  dq_I S^{-1}_L(s,q).
	\end{eqnarray}
	The last equality is due to the fact that $ \partial \left((\mathbb{H} \setminus U_q) \cap \mathbb{C}_I \right)=-\partial (U_q \cap \mathbb{C}_I)$ and $S^{-1}_R(q,s)=-S^{-1}_L(s,q).$ Combining \eqref{inte1} and \eqref{inte3} we get
	\begin{eqnarray*}
		\breve{f}^\circ(T)&=& \frac{1}{ 2\pi} \int_{\partial(U_s \cap \mathbb{C}_J)} \mathcal P^L_2(s,T)  ds_J f(s)\\
		&=& \frac{1}{ 2\pi} \int_{\partial(U_s \cap \mathbb{C}_J)} \left( \frac{1}{2 \pi} \int_{\partial(U_q \cap \mathbb{C}_I)} \mathcal P^L_2(q,T)  dq_I S_{L}^{-1}(s,q)\right) ds_J f(s).
	\end{eqnarray*}
	Due to Fubini's theorem we can exchange the order of integration and by the Cauchy formula we obtain
	\begin{eqnarray*}
		\breve{f}^\circ(T)&=&  \frac{1}{ 2\pi} \int_{\partial (U_q \cap \mathbb{C}_I)} \mathcal P^L_2(q,T) dq_I  \left( \frac{1}{2 \pi} \int_{\partial (U_s \cap \mathbb{C}_J)}   S_{L}^{-1}(s,q) ds_J f(s) \right)\\
		&=&  \frac{1}{2\pi} \int_{\partial(U_q \cap \mathbb{C}_I)} \mathcal P^L_2(q,T)  dq_I f(q).
	\end{eqnarray*}
	This proves the statement.
\end{proof}
The following result is also important to have a well posed functional calculus. 
\begin{theorem}
	\label{ps2}
	Let $ U$ be a slice Cauchy domain. If $f,g\in SH_L(U)$ (resp. $f,g\in SH_R(U)$) and $\overline{\mathcal{D}}f=\overline{\mathcal{D}}g$  (resp. $f\overline{\mathcal{D}}=g\overline{\mathcal{D}}$) then for any $T\in\mathcal{BC}(X)$ such that $T= T_0e_0+T_1e_1+T_2 e_2$, and assuming that the operators $T_{\ell}$, $\ell=0,1,2$, have real spectrum, we have
	$$\breve f^0(T)=\breve g^0(T).$$ 
\end{theorem}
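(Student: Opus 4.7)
By $\mathbb{H}$-linearity of the integral in Definition~\ref{qfun}, setting $h := f - g$ reduces the left-sided identity $\breve{f}^{0}(T) = \breve{g}^{0}(T)$ to proving that
\[
\int_{\partial(U \cap \mathbb{C}_J)} \mathcal{P}^L_2(s,T)\, ds_J\, h(s) = 0
\]
for every $h \in SH_L(U)$ with $\overline{\mathcal{D}}h = 0$. The right-sided statement is handled in the same way with $\mathcal{P}^R_2$, so I would focus on the left case throughout.

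The heart of the argument is to characterize the kernel of $\overline{\mathcal{D}}$ on $SH_L(U)$. Writing $h(q) = \alpha(q_0, r) + (\underline{q}/r)\beta(q_0, r)$ with $r = |\underline{q}|$ and $\alpha, \beta$ satisfying the Cauchy--Riemann system of Definition~\ref{hyper}, a direct computation in axial coordinates---hinging on the identity $\sum_{i,j=1}^{3} e_i e_j q_i q_j = -r^2$ to absorb the cross terms in $\sum_i e_i \partial_{q_i}[(\underline{q}/r)\beta]$---yields
\[
\overline{\mathcal{D}}h \;=\; 2\left(\beta_r + \frac{\beta}{r}\right) + 2\,\frac{\underline{q}}{r}\, \beta_{q_0}.
\]
Setting this equal to zero forces $\beta_{q_0} = 0$ and $(r\beta)_r = 0$, so $\beta(q_0, r) = C/r$ for some constant $C \in \mathbb{H}$. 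If $U$ meets the real axis, the regularity of $h$ at $r = 0$ forces $C = 0$; otherwise the CR identities $\alpha_{q_0} = \beta_r = -C/r^2$ and $\alpha_r = -\beta_{q_0} = 0$ would force $\alpha$ to be independent of $r$ while making $\alpha_{q_0}$ depend explicitly on $r$, again giving $C = 0$. Thus $\beta \equiv 0$ and, by the CR relations, $\alpha$ is constant on each connected component of $U$; so $h$ is locally constant.

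Having reduced to $h \equiv c \in \mathbb{H}$ on each component of $U$, the conclusion follows from the contour-independence of the $P_2$-functional calculus already proved: enlarge $U$ to a ball of radius $R > \|T\|$, where the series expansion \eqref{sl2} of $\mathcal{P}^L_2(s,T)$ from Proposition~\ref{p2} converges uniformly on $\partial B_R \cap \mathbb{C}_J$, and integrate term-by-term. Since $\int_{\partial B_R \cap \mathbb{C}_J} s^{-1-n}\, ds_J = 0$ for every $n \geq 1$, the integral vanishes.

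I expect the main obstacle to be the kernel characterization: the axial formula for $\overline{\mathcal{D}}$ on slice hyperholomorphic functions needs careful bookkeeping with quaternion noncommutativity, and the case where $U$ misses the real axis must be handled separately. A conceptually cleaner alternative, available whenever $U$ contains a real point $q_\ast$, is to expand $h$ locally as $h(q) = \sum_n (q - q_\ast)^n a_n$ and apply Lemma~\ref{res3b}: the linear independence of the monomials $q^a \bar{q}^b$ forces $a_n = 0$ for $n \geq 1$, and the identity principle for slice hyperholomorphic functions then extends the conclusion to the whole component.
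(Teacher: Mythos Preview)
Your kernel characterization via the axial computation of $\overline{\mathcal{D}}h$ is correct and gives a clean alternative to the paper's approach (Theorem~\ref{Tcost}), which instead expands $h$ in a power series about a real point, applies Lemma~\ref{res3b} termwise, and restricts to the real axis to force all higher coefficients to vanish. Your argument even handles the case where $U$ misses the real axis directly, whereas the paper's series proof implicitly uses $U\cap\mathbb{R}\neq\emptyset$. In the \emph{connected} case your concluding step is also fine: once $h\equiv c$ on all of $U$, the constant extends slice-hyperholomorphically to $\mathbb{H}$, and since $\sigma_S(T)\subset U$, contour-independence lets you pass to a large ball and kill the integral via the series \eqref{sl2}.

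The gap is in the \emph{disconnected} case. If $U=\bigcup_{\ell}U_\ell$ with $h\equiv c_\ell$ on each component, you need
\[
\int_{\partial(U_\ell\cap\mathbb{C}_J)}\mathcal{P}^L_2(s,T)\,ds_J=0
\]
for every $\ell$ separately, because the constants $c_\ell$ may differ. But a single component $U_\ell$ need not contain all of $\sigma_S(T)$, so you cannot deform $\partial U_\ell$ to a large circle: the integrand $\mathcal{P}^L_2(s,T)$ is singular on $\sigma_S(T)$, and points of the spectrum lying \emph{outside} $U_\ell$ obstruct the Cauchy deformation. The paper closes this gap with Lemma~\ref{mono}, which expresses $F_L(s,T)$ through the McIntosh monogenic resolvent $G(\omega,T)$ and then uses the Fueter integral identities $\int_{\partial G}F_L(s,\omega)\,ds_J=0$ and $\int_{\partial G}F_L(s,\omega)\,ds_J\,s=0$ pointwise in $\omega$. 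That argument is precisely where the hypothesis $T=T_0e_0+T_1e_1+T_2e_2$ (with real spectra) enters, since the McIntosh calculus requires dropping one imaginary component. Your proposal never invokes this hypothesis, which is a tell that the disconnected case is not yet covered; indeed, the paper remarks immediately after the proof that the connected case goes through for full four-component $T$, while the disconnected case genuinely needs the restriction.
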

In order to prove the previous theorem we need some auxiliary results. First of all, we have to study the following sets
$$ (\ker{\overline{\mathcal{D}}})_{SH_L(\Omega)}:=\{f\in SH_L(\Omega): \overline{\mathcal{D}}(f)=0\}\quad\textrm{and}\quad (\ker{\overline{\mathcal{D}}})_{SH_R(\Omega)}:=\{f\in SH_R(\Omega): (f)\overline{\mathcal{D}}=0\} .$$ 
It is necessary to study these sets because in the hypothesis of Theorem \ref{ps2} we have $\overline{\mathcal{D}}(f-g)=0$ (resp. $(f-g)\overline{\mathcal{D}}=0$).

\begin{theorem}\label{Tcost} Let $\Omega$ be a connected slice Cauchy domain of $\mathbb H$, then
	\begin{eqnarray*}
		(\ker{\overline{\mathcal{D}}})_{SH_L(\Omega)}&=&\{f\in SH_L(\Omega): f\equiv \alpha\quad\textrm{for some $\alpha\in\mathbb H$}\}\\
		&=&\{f\in SH_R(\Omega): f\equiv \alpha\quad\textrm{for some $\alpha\in\mathbb H$}\}=(\ker{\overline{\mathcal{D}}})_{SH_R(\Omega)}.
	\end{eqnarray*}
	
\end{theorem}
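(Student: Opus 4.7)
The inclusion of the set of quaternionic constants into $(\ker{\overline{\mathcal{D}}})_{SH_L(\Omega)}\cap(\ker{\overline{\mathcal{D}}})_{SH_R(\Omega)}$ is immediate: any $\alpha\in\mathbb{H}$ is intrinsic slice hyperholomorphic, hence lies in both $SH_L(\Omega)$ and $SH_R(\Omega)$, and is annihilated by $\overline{\mathcal{D}}$ from either side. The real content of the theorem is the reverse inclusion. I shall carry out the argument in the left case; the right case is completely analogous, with $f(q)=\alpha(u,v)+\beta(u,v)J$ and $\overline{\mathcal{D}}$ acting on the right.

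Take $f\in SH_L(\Omega)$ satisfying $\overline{\mathcal{D}}f=0$, and restrict attention to the open subset of $\Omega$ where $v:=|\underline{q}|>0$. Writing $f(q)=\alpha(u,v)+J\beta(u,v)$ with $u=q_0$ and $J=\underline{q}/v$, I would compute $\overline{\mathcal{D}}f=\partial_{q_0}f-\sum_{i=1}^{3}e_i\partial_{q_i}f$ by direct application of the chain rule. Using $\partial_{q_i}v=q_i/v$ and $\partial_{q_i}(\underline{q}/v)=e_i/v-q_i\underline{q}/v^3$, together with the Clifford identities $\sum_i e_i^2=-3$, $\sum_i e_i q_i=\underline{q}$ and $\underline{q}^{\,2}=-v^2$, and finally the Cauchy--Riemann system $\partial_u\alpha=\partial_v\beta$, $\partial_v\alpha=-\partial_u\beta$ of Definition~\ref{hyper}, the cancellations should yield the clean form
\[
\overline{\mathcal{D}}f\;=\;2\,\partial_u\alpha+2J\,\partial_u\beta+\frac{2}{v}\beta .
\]

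Since $\Omega$ is axially symmetric, for each admissible $(u,v)$ with $v>0$ the identity above must hold for every $J\in\mathbb{S}$. The elementary observation that $A+JB=0$ in $\mathbb{H}$ for two distinct $J\in\mathbb{S}$ forces $A=B=0$ then yields $\partial_u\beta=0$ and $\partial_u\alpha=-\beta/v$. Combined with the Cauchy--Riemann equations this gives $\partial_v\alpha=0$ and $\partial_v\beta=\partial_u\alpha$, so $\alpha$ depends only on $u$, $\beta$ only on $v$, and both $\partial_u\alpha$ and $\partial_v\beta$ equal a common constant $c\in\mathbb{H}$. Writing $\alpha=cu+a$ and $\beta=cv+b$ and substituting into $\partial_u\alpha=-\beta/v$ produces $2c+b/v=0$ for all admissible $v>0$, which forces $c=0$ and $b=0$. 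Hence $f\equiv a\in\mathbb{H}$ on the region $v>0$, and the connectedness of $\Omega$ (together with continuity across $v=0$ whenever $\Omega\cap\mathbb{R}\neq\emptyset$) propagates this value to all of $\Omega$. The main technical hurdle is the careful bookkeeping in the computation of $\sum_i e_i\partial_{q_i}f$: non-commutativity forbids moving the $e_i$'s past $\underline{q}$ or $J$ without care, and the clean form above emerges only after the Clifford identities are applied in the correct order.
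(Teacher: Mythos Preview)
Your argument is correct, and the closed formula
\[
\overline{\mathcal{D}}f \;=\; 2\,\partial_u\alpha + 2J\,\partial_u\beta + \frac{2}{v}\,\beta
\]
does indeed come out of the chain-rule computation (the key identities $\sum_i q_i e_i = vJ$ and $J^2=-1$ collapse $\sum_i e_i(\partial_{q_i}J)\beta$ to $-2\beta/v$, and the Cauchy--Riemann system then merges the remaining terms in pairs). From there your separation argument and the elementary ODE step $2c+b/v=0$ are clean. The only place that deserves an explicit word is the passage from ``$f$ constant on each connected piece of $\{v>0\}$'' to ``$f$ constant on $\Omega$'': this follows from the identity principle for slice hyperholomorphic functions on the connected domain $\Omega$, rather than from mere continuity.

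The paper proceeds quite differently. Instead of computing $\overline{\mathcal{D}}f$ in slice coordinates, it expands $f$ in a power series $f(q)=\sum_{k\geq 0} q^k\alpha_k$ near a real point (after translation), applies the explicit formula $\overline{\mathcal{D}}q^k = 2\big(kq^{k-1}+\sum_{s=1}^{k}q^{k-s}\bar q^{\,s-1}\big)$ of Lemma~\ref{res3b} term by term, and then restricts to the real axis to obtain a one-variable power series identity forcing $\alpha_k=0$ for $k\geq 1$. So the paper's route is algebraic and relies on the preparatory Lemma~\ref{res3b}, whereas your route is PDE-based and self-contained. Your approach has the bonus of producing a usable closed expression for $\overline{\mathcal{D}}$ on arbitrary slice functions (not just monomials); the paper's approach avoids all bookkeeping with non-commuting chain-rule terms and reduces everything to a single evaluation on $\mathbb{R}$.
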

\begin{proof}
	We prove the result in the case $f\in SH_L(\Omega)$ since the case $f\in SH_R(\Omega)$ follows by similar arguments. We proceed by double inclusion. The fact that
	$$
	(\ker{\overline{\mathcal{D}}})_{SH_L(\Omega)}\supseteq\{f\in SH_L(\Omega): f\equiv \alpha\quad\textrm{for some $\alpha\in\mathbb H$}\}
	$$ is obvious. The other inclusion can be proved observing that if $f\in (\ker{\overline{\mathcal{D}}})_{SH_L(\Omega)}$, after a change of variable if needed, there exists $r>0$ such that the function $f$ can be expanded in a convergent series at the origin
	$$f(q)=\sum_{k=0}^{\infty}q^k \alpha_k\quad\textrm{for $(\alpha_k)_{k\in\mathbb N_0}\subset\mathbb H$ and for any $q\in B_r(0)$}$$
	where $B_r(0)$ is the ball centered at $0$ and of radius $r$. By Lemma \ref{res4}, we have
	\begin{equation}\label{novaeq1}
		0=\overline{\mathcal{D}}f(q)\equiv\sum_{k=1}^{\infty} \overline{\mathcal{D}}(q^k)\alpha_k=2\sum_{k=1}^\infty\left(kq^{k-1}+\sum_{s=1}^k q^{k-s}\bar q^{s-1}\right) \alpha_k,\quad \forall q\in B_r(0).
	\end{equation}
	If we restrict the previous series in \eqref{novaeq1} in a neighborhood $U$ of $0$ of the real line we get
	$$0=\sum_{k=1}^{\infty}q_0^{k-1}\alpha_k\quad \forall\, q_0\in U$$
	and this implies 
	$$\alpha_k=0,\quad \forall k\geq 1.$$
	Thus $f(q)\equiv \alpha_0$ in $B_r(0)$ and since $\Omega$ is connected $f(q)\equiv\alpha_0$ for any $q\in\Omega$.
\end{proof}

To define a monogeinc functional McIntosh and collaborators, see \cite{J}, had as hypothesis that the component $T_0$ of the operator $T=T_0+T_1e_1+T_2e_2+T_3e_3$ is zero. However, it is possible to set zero a different component of the operator $T$. In a polyanalytic functional calculus is not convenient to have $T_0=0$, due to the left and right structure of the $ \mathcal{\overline{D}}$-kernel (see Definition \ref{dbars}). For this reason, in the present work we impose the last component of the operator $T$ to be zero, i.e., $T_3=0$.  

\begin{lemma}
	\label{mono}
	Let $T \in \mathcal{BC}(X)$ be such that $T= T_0e_0+ T_1e_1+ T_2 e_2$, and assume that the operators $T_{\ell}$, $\ell=0,1,2$, have real spectrum. Let $G$ be a bounded slice Cauchy domain such that $(\partial G) \cap \sigma_{S}(T)= \emptyset$. For every $J \in \mathbb{S}$ we have
	\begin{equation}\label{novaeq2}
		\int_{\partial{(G \cap \mathbb{C}_J)}} \mathcal P^L_2(s,T) ds_J=0\quad\textrm{and}\quad \int_{\partial{(G \cap \mathbb{C}_J)}} ds_J \mathcal P^R_2(s,T) =0.
	\end{equation}
\end{lemma}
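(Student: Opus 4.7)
Use the decomposition $\mathcal P^L_2(s,T)=-F_L(s,T)\,s+T_0\,F_L(s,T)$ from Definition \ref{dbars}. Since $s\in\cc_J$ commutes with $ds_J$,
\[
\int_{\partial(G\cap\cc_J)}\mathcal P^L_2(s,T)\,ds_J
= -\int_{\partial(G\cap\cc_J)}F_L(s,T)\,ds_J\,s
+ T_0\int_{\partial(G\cap\cc_J)}F_L(s,T)\,ds_J,
\]
so the first identity in \eqref{novaeq2} will follow once we prove that $\int_{\partial(G\cap\cc_J)}F_L(s,T)\,ds_J\,f(s)=0$ for the two intrinsic slice hyperholomorphic functions $f\equiv 1$ and $f(s)=s$, both of which are annihilated by the Laplacian. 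The right-hand identity is proved analogously from $\mathcal P^R_2(s,T)=-s\,F_R(s,T)+T_0\,F_R(s,T)$; for these intrinsic $f$ Theorem \ref{intri} identifies the left and right calculi.

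\textbf{The two pure cases.} Suppose $\sigma_S(T)\subset G$. By Theorem \ref{CIT}, since $F_L(s,T)$ is right slice hyperholomorphic in $s$ on $\rho_S(T)$, we may deform $\partial G$ to a circle $|s|=R>\|T\|$ inside $\cc_J$. On this circle the expansion of $F_L(s,T)=-4(s\mathcal I-\overline T)\mathcal Q_{c,s}(T)^{-2}$ at infinity begins at order $s^{-3}$ (a direct series computation using $\|T\|<|s|$), so for $f\in\{1,s\}$ every term of the integrand, once paired with $ds_J=s\,d\theta$, is a power $s^{-k}$ with $k\ge 1$ and integrates to zero. If on the contrary $\sigma_S(T)\cap G=\emptyset$, the integrand is right slice hyperholomorphic on a neighborhood of $\overline G$ and Theorem \ref{CIT} kills the integral directly.

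\textbf{The mixed case and main obstacle.} Under the hypothesis $\partial G\cap\sigma_S(T)=\emptyset$ the sets $\sigma_1:=\sigma_S(T)\cap G$ and $\sigma_2:=\sigma_S(T)\setminus\overline G$ are open and closed in $\sigma_S(T)$, hence disjoint and compact. Using the $S$-functional calculus applied to intrinsic slice hyperholomorphic functions that are locally $1$ near $\sigma_1$ and $0$ near $\sigma_2$, one obtains commuting Riesz projectors $P_1,P_2$ with $P_1+P_2=\mathcal I$, each commuting with $T$, and producing the decomposition $X=P_1X\oplus P_2X$ with $\sigma_S(T|_{P_i X})=\sigma_i$. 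Since the $P_i$ commute with $T$ (hence with $\mathcal Q_{c,s}(T)^{-1}$ and $F_L(s,T)$) and with scalar quaternions, they factor out of the contour integral:
\[
\int_{\partial(G\cap\cc_J)}F_L(s,T)\,ds_J\,f(s) = \sum_{i=1}^2 P_i \int_{\partial(G\cap\cc_J)}F_L(s,T|_{P_iX})\,ds_J\,f(s).
\]
For $i=1$ we have $\sigma_S(T|_{P_1X})=\sigma_1\subset G$ (pure case one); for $i=2$ we have $\sigma_S(T|_{P_2X})=\sigma_2$, which is disjoint from $G$ (pure case two). Both summands vanish by the previous paragraph. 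The main technical difficulty is exactly this mixed case: one has to invoke the Riesz-projector construction for the $S$-functional calculus and verify its compatibility with the $F$-kernel, which is where the standing hypothesis $T_3=0$ together with the real-spectrum assumption on $T_0,T_1,T_2$ enters, ensuring the commutative $F$-functional calculus is at our disposal.
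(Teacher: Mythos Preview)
Your approach differs from the paper's, and the difference matters.

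\textbf{What the paper does.} After the same decomposition $\mathcal P^L_2(s,T)=-F_L(s,T)s+T_0F_L(s,T)$ and the observation that $\Delta(1)=\Delta(q)=0$ gives
\[
\int_{\partial(G\cap\cc_J)}F_L(s,\omega)\,ds_J=0,\qquad \int_{\partial(G\cap\cc_J)}F_L(s,\omega)\,ds_J\,s=0
\]
for every $\omega\notin\partial G$, the paper invokes the \emph{monogenic functional calculus of McIntosh} (reference~\cite{J}) to write
\[
F_L(s,T)=\int_{\partial\Omega}G(\omega,T)\,\mathbf D\omega\,F_L(s,\omega),
\]
then applies Fubini. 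The McIntosh calculus is precisely where the hypotheses $T_3=0$ and real spectrum of the $T_\ell$ enter: it is a calculus in $\mathbb R^3$ for tuples of three commuting operators with real spectra. This single representation handles all positions of $\sigma_S(T)$ relative to $G$ at once; no case distinction is needed.

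\textbf{Where your argument has a gap.} Your pure cases are fine. The problem is the mixed case: you claim that the $S$-functional calculus Riesz projectors $P_i=\chi_i(T)$ commute with $F_L(s,T)$, deducing this from $P_iT=TP_i$. But $F_L(s,T)=-4(s\mathcal I-\bar T)\mathcal Q_{c,s}(T)^{-2}$ involves $\bar T$ and left multiplication by the quaternionic scalar $s$; commutation of $P_i$ with $T$ alone does not yield commutation with either of these. What you need is that $P_i$ is a \emph{real} operator (i.e., comes from $\mathcal B(X_{\mathbb R})$ and hence commutes with left quaternionic multiplication and with each $T_\ell$, hence with $\bar T$). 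This is plausible for intrinsic $\chi_i$ and $T\in\mathcal{BC}(X)$, but it is a separate statement that must be proved; you have asserted it without justification. Relatedly, your final sentence says $T_3=0$ enters to ensure ``the commutative $F$-functional calculus is at our disposal,'' but that calculus exists for any $T\in\mathcal{BC}(X)$; you have not actually identified any step in your argument that uses $T_3=0$ or the real-spectrum hypothesis. If your Riesz-projector route were fully justified, it would in fact prove the lemma without those hypotheses---which is a sign that the justification you are missing is nontrivial.
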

\begin{proof}
We prove only the first equation of \eqref{novaeq2}, since the other one follows by similar computations. Since $\Delta(1)=0$ and $\Delta(q)=0$, by Theorem \ref{Fueter} we have
	\begin{equation}\label{nova}
		\int_{\partial{(G \cap \mathbb{C}_J)}} F_L(s,q) ds_J=\Delta(1)=0,
	\end{equation}
	and
	\begin{equation}\label{nova_bis}
		\int_{\partial{(G \cap \mathbb{C}_J)}} F_L(s,q) ds_Js =\Delta(q)=0,
	\end{equation}
	for all $q \notin \partial G$ and $J \in \mathbb{S}$. By the monogenic functional calculus of McIntosh and collaborators, see \cite{J}, we have
	$$ F_L(s,T)= \int_{\partial \Omega} G(\omega,T) \mathbf{D}\omega F_L(s,\omega),$$
	where $\mathbf{D}\omega$ is a suitable differential form, the open set $ \Omega$ contains the left spectrum of $T$ and $G(\omega,T)$ is the Fueter resolvent operator. By Definition \ref{dbars} we can write
	$$\mathcal P^L_2(s,T)=-F_L(s,T)s+q_0 F_L(s,T)),$$ 
	thus  we have
	\begin{eqnarray*}
		&& \int_{\partial{(G \cap \mathbb{C}_J)}} \mathcal P^L_2(s,T) ds_J=-\int_{\partial{(G \cap \mathbb{C}_J)}}  F_L(s,T)s-T_0F_L(s,T) ds_J\\
		&&=- \left( \int_{\partial{(G \cap \mathbb{C}_J)}}  \int_{\partial\Omega} G(\omega, T)\mathbf{D}\omega F_L(s,\omega)s\, ds_J - T_0\int_{\partial{(G \cap \mathbb{C}_J)}}\int_{\partial\Omega} G(\omega,T)\mathbf{D}\omega F_L(s,\omega) ds_J\right)\\
		&&= -\frac 14\left( \int_{\partial \Omega} G(\omega,T) \mathbf{D}\omega \left(\int_{\partial{(G \cap \mathbb{C}_J)}} F_L(s,\omega) ds_J s \right)-T_0\int_{\partial \Omega} G(\omega,T) \mathbf{D}\omega \left(\int_{\partial{(G \cap \mathbb{C}_J)}} F_L(s,\omega)  ds_J\right)\right)\\
		&&= 0
	\end{eqnarray*}
	where the second equality is a consequence of the Fubini's Theorem and the last equality is a consequence of formulas \eqref{nova} and \eqref{nova_bis}. 
\end{proof}
\begin{proof}[Proof of Theorem \ref{ps2}]
	We prove the theorem when $f,g\in SH_L(\Omega)$. The case of $f,g\in SH_R(\Omega)$ follows by similar arguments. We divide the proof in two cases.
	\newline
	\newline
	\fbox{$U$ is connected}
	\newline
	\newline
	By definition of the $P_2$-functional calculus on the $S$-spectrum, see Definition \ref{qfun}, we have
	$$
	\breve f^0(T)-\breve g^0(T)=\frac 1{2\pi}\int_{\partial (U\cap\cc_J)}\mathcal P^L_2(s,T)ds_J(f(s)-g(s)).
	$$
	Since $\mathcal P^L_2(s,T)$ is slice hyperholomorphic in the variable $s$ by Theorem \ref{Cauchy}, we can change the domain of integration to $B_r(0)\cap\cc_J$ for some $r>0$ with $ \| T \| <r$. Moreover, by hypothesis we have that $f(s)-g(s) \in (\ker{\overline{\mathcal{D}}})_{SH_L(\Omega)}$, thus by Theorem \ref{Tcost} and Proposition \ref{p2} we get
	\[
	\begin{split}
		\breve f^0(T)-\breve g^0(T)&=\frac 1{2\pi}\int_{\partial (B_r(0)\cap\cc_J)} \mathcal P^L_2(s,T)ds_J(f(s)-g(s))
		\\
		&
		=\frac 1{2\pi}\int_{\partial (B_r(0)\cap\cc_J)} \mathcal P^L_2(s,T) ds_J\alpha\\
		&= \frac 1\pi \sum_{m=1}^\infty\left(mT^{m-1}+\sum_{k=1}^m T^{m-k}\bar T^{k-1}\right) \int_{\partial (B_r(0)\cap \cc_J)}s^{-1-m} ds_J \alpha=0.
	\end{split}
	\]
	\newline
	\newline
	\fbox{$U$ is not connected}
	\newline
	\newline
	In this case we write the set $U$ in the following way
	$U=\cup_{\ell=1}^n U_\ell$ where the $U_\ell$ are the connected components of $U$. Hence, there exist constants $\alpha_\ell\in\mathbb H$ for $\ell=1,\dots, n$, such that $f(s)-g(s)=\sum_{\ell=1}^n\chi_{U_\ell}(s)\alpha_\ell$. Thus we can write
	$$
	\breve f^\circ(T)-\breve g^\circ(T)=\sum_{\ell=1}^n\frac 1{2\pi}\int_{\partial(U_\ell\cap\mathbb C_J)}\mathcal P^L_2(s,T)ds_J\alpha_\ell.
	$$
	Finally, by Lemma \ref{mono}, we get $\breve f^\circ(T)-\breve g^\circ(T)=0$.
\end{proof}

\begin{remark}
	If the set $U$ in Theorem \ref{ps2} is connected we can show the result for operators of the following form $T=T_0e_0+T_1e_1+T_2e_2+T_3e_3$. However, in order to have a well defined functional calculus also for the not connected case, as it happens for the monogenic functional calculus of McIntosh, we need to annihilate a component of the operator $T$.
\end{remark}

We conclude this section with some algebraic properties of the $P_2$-functional calculus.
\begin{proposition}
	Let $T\in\mathcal{BC}(X)$ be such that $T=T_0e_0+T_1e_1+T_2e_2$, and assume that the operators $T_\ell$, $\ell=0,1,\,2$, have real spectrum.
	\begin{itemize}
		\item If $\breve f^\circ=\overline{\mathcal{D}}f$ and $\breve f^\circ=\overline{\mathcal{D}}g$ with $f,g\in SH_L(\sigma_S(T))$ and $a\in\hh$, then
		$$ (\breve f^\circ a+\breve g^\circ)(T)=\breve f^\circ (T)a+\breve g^\circ(T). $$
		\item If $\breve f^\circ=f\overline{\mathcal{D}}$ and $\breve g^\circ=g\overline{\mathcal{D}}$ with $f,g\in SH_R(\sigma_S(T))$ and $a\in\hh$, then
		$$ (a\breve f^\circ+\breve g^\circ)(T)=a\breve f^\circ (T)+\breve g^\circ(T). $$
	\end{itemize}
\end{proposition}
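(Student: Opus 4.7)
The approach is straightforward: reduce the identity to linearity of the defining integral after choosing a convenient representation for $\breve f^\circ a+\breve g^\circ$ as the image under $\overline{\mathcal D}$ of a left slice hyperholomorphic function (resp.\ right slice hyperholomorphic in the second bullet).

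First I would verify the two algebraic facts about $SH_L$ needed in the first bullet. Since a left slice hyperholomorphic function has the form $f(q)=\alpha(u,v)+J\beta(u,v)$ with $\alpha,\beta$ satisfying the Cauchy--Riemann system and the parity relations \eqref{eveodd}, and since these are real-linear conditions that survive right multiplication by any quaternion $a\in\hh$, we get $fa\in SH_L(\sigma_S(T))$ and therefore $fa+g\in SH_L(\sigma_S(T))$. Moreover the operator $\overline{\mathcal D}$ acts on the left with real coefficients, hence $\overline{\mathcal D}(fa+g)=(\overline{\mathcal D}f)a+\overline{\mathcal D}g=\breve f^\circ a+\breve g^\circ$.

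With this representation in hand, I would apply Definition \ref{qfun} to the function $fa+g$. Well-posedness of the $P_2$-functional calculus (Theorem \ref{ps2}) guarantees that $(\breve f^\circ a+\breve g^\circ)(T)$ does not depend on the particular left slice hyperholomorphic primitive chosen, so we may use $fa+g$. Then by right $\hh$-linearity of the Riemann integral on quaternionic-valued integrands, one obtains
\[
(\breve f^\circ a+\breve g^\circ)(T)=\frac{1}{2\pi}\int_{\partial(U\cap\cc_J)}\mathcal P^L_2(s,T)\,ds_J\,(f(s)a+g(s))=\breve f^\circ(T)a+\breve g^\circ(T),
\]
which is the desired identity. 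The second bullet is proved by the symmetric argument: $af+g\in SH_R(\sigma_S(T))$ for $a\in\hh$ (since now $f(q)=\alpha(u,v)+\beta(u,v)J$ and left multiplication by $a$ preserves the right slice structure), $(af+g)\overline{\mathcal D}=a\,\breve f^\circ+\breve g^\circ$, and the definition \eqref{inte2} together with left $\hh$-linearity of the integral yields the result.

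There is no genuine obstacle in this proof; the only point that demands a moment of care is invoking Theorem \ref{ps2} to justify that the left-hand side is well defined through the chosen primitive $fa+g$ (resp.\ $af+g$), so that the identity is not merely an equality of specific integral representations but an equality of the operators defined by the $P_2$-functional calculus.
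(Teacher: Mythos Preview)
Your proof is correct and follows essentially the same approach as the paper, which simply states that the identities follow immediately from the linearity of the integrals \eqref{inte1} and \eqref{inte2}. You are more explicit than the paper in verifying that $fa+g$ (resp.\ $af+g$) is a valid primitive and in invoking Theorem \ref{ps2} for well-posedness, but the underlying idea is identical.
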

\begin{proof}
	The obove identities follow immediately from the linearity of the integrals in \eqref{inte1}, resp. \eqref{inte2}.
\end{proof}

\begin{proposition}
	Let $T\in\mathcal{BC}(X)$ be such that $T=T_0e_0+T_1e_1+T_2e_2$, and assume that the operators $T_\ell$, $\ell=0,1,\,2$, have real spectrum.
	\begin{itemize}
		\item If $\breve f^{\circ}=\overline{\mathcal{D}}f$ with $f\in SH_L(\sigma_S(T))$ and assume that $f(q)=\sum_{m=0}^{\infty} q^ma_m$ with $a_m\in\mathbb H$, where this series converges on a ball $B_r(0)$ with $\sigma_S(T)\subset B_r(0)$. Then
		$$ \breve f^\circ(T)=2\sum_{m=1}^\infty\left(mT^{m-1}+\sum_{k=1}^m T^{m-k}\bar T^{k-1}\right) a_m.$$
		\item If $\breve f^{\circ}=f\mathcal{D}$ with $f\in SH_R(\sigma_S(T))$ and assume that $f(q)=\sum_{m=0}^{\infty} a_mq^m$ with $a_m\in\mathbb H$, where this series converges on a ball $B_r(0)$ with $\sigma_S(T)\subset B_r(0)$. Then
		$$ \breve f^\circ(T)=2\sum_{m=1}^\infty a_m \left(mT^{m-1}+\sum_{k=1}^m T^{m-k}\bar T^{k-1}\right).$$
	\end{itemize}
\end{proposition}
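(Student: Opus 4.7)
The plan is to evaluate the defining integral of Definition \ref{qfun} on a circular contour and use the series representations already available for both the kernel and the function, leaving only an orthogonality computation.

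First I would choose $\rho$ with $\|T\|<\rho<r$ so that $\sigma_S(T)\subset B_\rho(0)\subset B_r(0)$, and take $U=B_\rho(0)$ in \eqref{inte1}. On $\partial(B_\rho(0)\cap\mathbb C_J)$ we have $s=\rho e^{J\theta}$, so $ds_J=\rho e^{J\theta}d\theta$ and, crucially, $s$, $ds_J$ and powers of $s$ all commute (they lie in $\mathbb C_J$). By Proposition \ref{p2}, the series
$$\mathcal P^L_2(s,T)=2\sum_{n=1}^\infty\!\left(nT^{n-1}+\sum_{k=1}^n T^{n-k}\bar T^{k-1}\right)\! s^{-1-n}$$
converges in $\mathcal B(X)$ uniformly on $\partial(B_\rho(0)\cap\mathbb C_J)$ (since $\|T\|<\rho$), and by hypothesis the series $f(s)=\sum_{m=0}^\infty s^m a_m$ also converges uniformly there (since $\rho<r$). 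Both facts let me substitute and interchange summations with the line integral.

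Second, after interchanging, the computation reduces to the scalar orthogonality integral
$$\frac{1}{2\pi}\int_{\partial(B_\rho(0)\cap\mathbb C_J)} s^{-1-n}\,ds_J\,s^m=\frac{1}{2\pi}\int_0^{2\pi} e^{J(m-n)\theta}\,d\theta=\delta_{n,m},$$
which follows at once from the parametrization above. Substituting back into \eqref{inte1} collapses the double sum over $n$ and $m$ to a single sum at $n=m$, yielding
$$\breve f^\circ(T)=2\sum_{m=1}^\infty\!\left(mT^{m-1}+\sum_{k=1}^m T^{m-k}\bar T^{k-1}\right)\! a_m,$$
as required. The $m=0$ term drops out automatically because the kernel series starts at $n=1$.

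Third, the right-slice case follows by the same scheme applied to \eqref{inte2} and \eqref{sr2}: the coefficients $a_m$ now sit on the left of $s^m$, and the identical orthogonality relation (with the same commutativity on $\mathbb C_J$) isolates the diagonal $n=m$ contribution, producing the stated expression with $a_m$ on the left of the operator bracket. I do not expect a genuine obstacle here; the only point requiring some care is the justification of the term-by-term integration of a product of two operator-valued power series, but this is immediate from the normal convergence of both series on the compact contour $\partial(B_\rho(0)\cap\mathbb C_J)$ together with the continuity of the $\mathcal B(X)$-valued integrand.
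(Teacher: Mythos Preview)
Your proposal is correct and follows essentially the same route as the paper's proof: choose a circular contour inside the ball of convergence, expand both $\mathcal P_2^L(s,T)$ via Proposition~\ref{p2} and $f(s)$ via its power series, interchange summation and integration by uniform convergence, and reduce to the orthogonality relation $\int_{\partial(B_\rho(0)\cap\mathbb C_J)} s^{-1-n+m}\,ds_J = 2\pi\,\delta_{n,m}$. The only cosmetic difference is that you parametrize the circle explicitly to compute the orthogonality integral, whereas the paper simply quotes its value.
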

\begin{proof}
	We prove the first assertion since the second one can be proven by following similar arguments. We choose an imaginary unit $J\in\mathbb S$ and a radius $0<R<r$ such that $\sigma_S(T)\subset B_R(0)$. Then the series expansion of $f$ converges uniformly on $\partial (B_R(0)\cap\cc_J)$, and so
	$$
	\breve f^0(T)=\frac 1{2\pi} \int_{\partial (B_R(0)\cap\cc_J)} \mathcal P^L_2(s,T)\,ds_J\, \sum_{\ell=0}^\infty s^\ell a_\ell=\frac 1{2\pi}\sum_{\ell=0}^\infty \int_{\partial (B_R(0)\cap\cc_J)} \mathcal P^L_2(s,T) \, ds_J s^\ell a_\ell.
	$$
	By Proposition \ref{p2}, we further obtain
	\[
	\begin{split}
		\breve f^0(T)&=\frac 1{\pi} \int_{\partial (B_R(0)\cap\cc_J)}\sum_{m=1}^\infty \left(mT^{m-1}+\sum_{k=1}^m T^{m-k}\bar T^{k-1}\right) s^{-1-m}\,ds_J\, \sum_{\ell=0}^\infty s^\ell a_\ell\\
		& =\frac 1{\pi} \sum_{m=1}^\infty  \sum_{\ell=0}^{\infty} \left(mT^{m-1}+\sum_{k=1}^m T^{m-k}\bar T^{k-1}\right) \int_{\partial (B_R(0)\cap\cc_J)} s^{-1-m+\ell}\, ds_J\,  a_\ell\\
		&= 2\sum_{m=1}^\infty  \left(mT^{m-1}+\sum_{k=1}^m T^{m-k}\bar T^{k-1}\right) a_m.
	\end{split}
	\]
	The last equality is due to the fact that $\int_{\partial (B_R(0)\cap\cc_J)}s^{-1-m+\ell}\, ds_J$ is equal to $2\pi$ if $\ell=m$, and $0$ otherwise.
\end{proof}

\begin{theorem}
	\label{poli1}
	Let $T \in \mathcal{BC}(X)$. Let $m \in \mathbb{N}_0$, and let $U \subset \mathbb{H}$ be a bounded slice Cauchy domain with $\sigma_{S}(T) \subset U$. For every $J \in \mathbb{S}$ we have
	\begin{equation}
		\label{bege}
		P_m^2(T)=\frac{1}{2 \pi} \int_{ \partial(U \cap \mathbb{C}_J)} \mathcal P^L_2(s,T) ds_J s^{m+1},
	\end{equation}
	where $$P_m^2(T):=(m+1)T^{m}+\sum_{k=0}^m T^{m-k}\bar T^{k}.$$
\end{theorem}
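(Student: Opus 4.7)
The approach is to substitute the series expansion (\ref{sl2}) of $\mathcal{P}^L_2(s,T)$ from Proposition \ref{p2}, deform the contour to a ball on which that series converges uniformly, and then read off the single coefficient picked out by a standard orthogonality integral.

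The first step is to reduce to an integration on $\partial(B_R(0)\cap\mathbb{C}_J)$ for some $R>\|T\|$. Since $\mathcal{P}^L_2(\cdot,T)$ is right slice hyperholomorphic on $\rho_S(T)$ with values in $\mathcal{B}(X)$ and $s\mapsto s^{m+1}$ is (left) slice hyperholomorphic on all of $\mathbb{H}$, and since $\sigma_S(T)\subset U\cap B_R(0)$, the region bounded between $\partial U$ and $\partial B_R(0)$ in $\mathbb{C}_J$ lies entirely in $\rho_S(T)$. Applying Theorem \ref{CIT} componentwise to the operator-valued integrand then equates the integrals over $\partial(U\cap\mathbb{C}_J)$ and $\partial(B_R(0)\cap\mathbb{C}_J)$.

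The second step is to swap sum and integral. On $\partial(B_R(0)\cap\mathbb{C}_J)$ the series (\ref{sl2}) converges uniformly in the operator norm, so this interchange is legal. Writing $A_n := nT^{n-1}+\sum_{k=1}^{n} T^{n-k}\bar T^{k-1}$, the integrand becomes $2A_n\, s^{-1-n}\,ds_J\, s^{m+1}$. The three scalar factors $s^{-1-n}$, $ds_J$, and $s^{m+1}$ all lie in $\mathbb{C}_J$ and therefore commute; collapsing them yields $2A_n\, s^{m-n}\,ds_J$. The classical one-complex-variable orthogonality
\[
\int_{\partial(B_R(0)\cap\mathbb{C}_J)} s^{k}\,ds_J \;=\; 2\pi\,\delta_{k,-1}
\]
then isolates the single term $n=m+1$, and a reindexing via $j=k-1$ of the inner sum defining $A_{m+1}$ gives
\[
A_{m+1} \;=\; (m+1)T^m + \sum_{k=1}^{m+1} T^{m+1-k}\bar T^{k-1} \;=\; (m+1)T^m + \sum_{j=0}^{m} T^{m-j}\bar T^{j} \;=\; P_m^2(T),
\]
which combined with the previous step produces the asserted identity.

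No step is particularly delicate. The one point that demands care is the contour deformation in the first step, where one must verify that the operator-valued slice hyperholomorphic function $\mathcal{P}^L_2(\cdot,T)$ is well-defined on the annular region between $\partial U$ and $\partial B_R(0)$; once that is in place, the rest is a routine orthogonality computation wholly analogous to the derivation of the monomial formulas for the $S$- and $F$-functional calculi.
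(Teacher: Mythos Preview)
Your approach is the same as the paper's: expand $\mathcal{P}^L_2(s,T)$ via Proposition~\ref{p2}, use the orthogonality integral on a circle to pick out the coefficient $n=m+1$, and invoke Theorem~\ref{CIT} to pass between $\partial U$ and $\partial B_R(0)$; the only cosmetic difference is that you deform the contour first and compute second, whereas the paper computes on the ball first and then deforms.

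One bookkeeping slip: carrying the factor $2$ from (\ref{sl2}) through the orthogonality step yields $\tfrac{1}{2\pi}\cdot 2A_{m+1}\cdot 2\pi = 2A_{m+1} = 2P_m^2(T)$, not $P_m^2(T)$. The paper's own proof silently drops this $2$ when it quotes the series expansion, so the discrepancy lives in the paper's statement rather than in your method.
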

\begin{proof}
	We start by considering $U$ to be the ball $B_r(0)$ with $\| T \| <r$. By Proposition \ref{p2} We know that we can expand the $\overline{\mathcal D}$-kernel operator as 
	$$
	\mathcal P^L_2(s,T)= \sum_{n=1}^{+ \infty} \left(nT^{n-1}+\sum_{k=1}^n T^{n-k}\bar T^{k-1}\right) s^{-1-n}
	$$
	for every $s \in \partial B_r(0)$. Since the series converges on $ \partial B_{r}(0)$, we have
	\[
	\begin{split}
		\frac{1}{2 \pi} \int_{ \partial(B_r(0) \cap \mathbb{C}_J)} \mathcal P^L_2(s,T) ds_J s^{m+1}&=\frac{1}{2 \pi} \sum_{n=1}^{+ \infty} \left(nT^{n-1}+\sum_{k=1}^n T^{n-k}\bar T^{k-1}\right)\int_{ \partial(B_r(0) \cap \mathbb{C}_J)} s^{-n+m} ds_J\\
		&=(m+1)T^{m}+\sum_{k=1}^{m+1} T^{m+1-k}\bar T^{k-1}\\
		&=(m+1)T^m+\sum_{k=0}^{m} T^{m-k} \overline T^{k}=P^2_m(T),
	\end{split}
	\]
	where we have used
	$$ \int_{\partial(B_{r}(0) \cap \mathbb{C}_J)} s^{-n+m}ds_J= \begin{cases}
		0 & \hbox{if} \, \, n \neq m+1\\
		2 \pi & \hbox{if} \, \, n=m+1.
	\end{cases}
	$$
	This proves the result for the case $U=B_r(0)$. Now we get the result for an arbitrary bounded Cauchy domain $U$ that contains $\sigma_S(T)$. The operator $\mathcal{P}_2^L(s,T)$ is right slice hyperholomorphic and the monomial $s^{m+1}$ is left slice hyperholomorphic on the bounded slice Cauchy domain $B_r(0) \setminus U$. By the Cauchy's integral theorem (see Theorem \ref{CIT}) we get
	
	\begin{eqnarray*}
		&&  \frac{1}{2 \pi} \int_{ \partial(B_r(0) \cap \mathbb{C}_J)} \mathcal P^L_2(s,T) ds_J s^{m+1}- \frac{1}{2 \pi} \int_{ \partial(U \cap \mathbb{C}_J)} \mathcal P^L_2(s,T) ds_J s^{m+1}\\
		&&= \frac{1}{2 \pi} \int_{ \partial\left((B_r(0)\setminus U) \cap \mathbb{C}_J\right)} \mathcal P^L_2(s,T) ds_J s^{m+1}=0.
	\end{eqnarray*}
	Finally we have
	$$ \frac{1}{2\pi} \int_{ \partial( U\cap \mathbb{C}_J)} \mathcal P^L_2(s,T) ds_J s^{m+1}=\frac{1}{2 \pi} \int_{ \partial(B_r(0) \cap \mathbb{C}_J)} \mathcal P^L_2(s,T) ds_J s^{m+1}= P^2_m(T),$$
	and this concludes the proof.
\end{proof}
Finally, by using the same methodology developed in \cite[Thm. 3.2.11]{CGKBOOK} we have the following result

\begin{lemma}
	\label{inte4}
	Let $T \in \mathcal{BC}(X)$. If $ f \in N(\sigma_S(T))$ and $U$ is a bounded slice Cauchy domain such that $ \sigma_S(T) \subset U$ and $ \overline{U} \subset dom(f)$, then we have
	$$
	\frac{1}{2\pi} \int_{\partial(U \cap \mathbb{C}_J)} \mathcal P^L_2(s,T) ds_J f(s)= \frac{1}{2\pi} \int_{\partial(U \cap \mathbb{C}_J)} f(s) ds_J \mathcal P^R_2(s,T).$$
\end{lemma}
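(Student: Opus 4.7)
The plan is to reduce the identity to the analogous statement for the $F$-functional calculus (Theorem \ref{intri}) together with the structural decomposition of the $\overline{\mathcal D}$-kernel operators given in Definition \ref{dbars}. Writing
\[
\mathcal P^L_2(s,T)=-F_L(s,T)s+T_0\,F_L(s,T),\qquad \mathcal P^R_2(s,T)=-sF_R(s,T)+T_0\,F_R(s,T),
\]
the left-hand side of the lemma splits as
\[
-\frac{1}{2\pi}\int_{\partial(U\cap\mathbb C_J)} F_L(s,T)\,s\,ds_J\,f(s)\;+\;T_0\cdot\frac{1}{2\pi}\int_{\partial(U\cap\mathbb C_J)} F_L(s,T)\,ds_J\,f(s),
\]
and analogously for the right-hand side. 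Here I use that $T_0$, being a real-component operator, commutes with right multiplication by quaternions (in particular with $ds_J$ and with $f(s)$), so it factors out of the integrals.

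The key observation is that the integral over $\partial(U\cap\mathbb C_J)$ takes place entirely in the slice plane $\mathbb C_J$: the variable $s$, the form $ds$, and the unit $J$ all commute with each other, hence $s\,ds_J=ds_J\,s$ on the contour. Therefore I can rewrite
\[
F_L(s,T)\,s\,ds_J\,f(s)=F_L(s,T)\,ds_J\,\bigl(sf(s)\bigr),\qquad f(s)\,ds_J\,s\,F_R(s,T)=\bigl(f(s)s\bigr)\,ds_J\,F_R(s,T).
\]
The second ingredient is that if $f\in N(\sigma_S(T))$, writing $f(u+Jv)=\alpha(u,v)+J\beta(u,v)$ with real-valued $\alpha,\beta$, a direct calculation shows that $sf(s)=(u\alpha-v\beta)+J(v\alpha+u\beta)=f(s)s$ is again an intrinsic slice hyperholomorphic function; so both $f$ and $s\mapsto sf(s)$ lie in $N(\sigma_S(T))$.

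With these preparations, Theorem \ref{intri} applied to the intrinsic function $f$ gives
\[
\frac{1}{2\pi}\int_{\partial(U\cap\mathbb C_J)} F_L(s,T)\,ds_J\,f(s)=\frac{1}{2\pi}\int_{\partial(U\cap\mathbb C_J)} f(s)\,ds_J\,F_R(s,T),
\]
and applied to the intrinsic function $s\mapsto sf(s)=f(s)s$ it gives
\[
\frac{1}{2\pi}\int_{\partial(U\cap\mathbb C_J)} F_L(s,T)\,ds_J\,sf(s)=\frac{1}{2\pi}\int_{\partial(U\cap\mathbb C_J)} f(s)s\,ds_J\,F_R(s,T).
\]
Combining these two equalities with the decomposition above yields the claimed identity. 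The only delicate point — and the step I expect to be the main obstacle — is the algebraic check that $sf(s)$ is intrinsic and the clean bookkeeping of the commutation $s\,ds_J=ds_J\,s$ on the slice contour; everything else is a direct reassembly of the pieces using Theorem \ref{intri}.
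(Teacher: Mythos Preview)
Your proof is correct. The decomposition of $\mathcal P^L_2$ and $\mathcal P^R_2$ via Definition~\ref{dbars}, together with the observation that $s\,ds_J=ds_J\,s$ on $\partial(U\cap\mathbb C_J)$ and that $s\mapsto sf(s)$ is again intrinsic, reduces the statement cleanly to two applications of the $F$-functional calculus case already recorded in Theorem~\ref{intri}. The commutation of $T_0$ with quaternionic scalar multiplication (on both sides, which you need when moving $T_0$ past $f(s)\,ds_J$ on the right-hand integral) holds because $T_0\in\mathcal B(X_{\mathbb R})$ acts as $T_0\otimes 1$ on $X=X_{\mathbb R}\otimes\mathbb H$.

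This is a genuinely different route from what the paper indicates. The paper does not give a proof but refers to ``the same methodology developed in \cite[Thm.~3.2.11]{CGKBOOK}'', which is the direct argument for the $S$-functional calculus: one exploits that for intrinsic $f$ the values $f(s)$ lie in $\mathbb C_J$ and hence commute with $ds_J$, then uses the substitution $s\mapsto\bar s$ on the contour together with conjugation identities relating the left and right kernels. That approach is self-contained and works uniformly for any kernel with the right conjugation symmetry. Your approach instead treats the identity as a corollary of the $F$-case plus the structural formula $\mathcal P^{L/R}_2=(-s+T_0)F_{L/R}$; it is shorter and avoids re-running the conjugation argument, at the cost of relying on Theorem~\ref{intri} as a black box. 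Both are valid; yours is the more economical derivation given what the paper has already stated.
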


\section{Resolvent equation and Product rule \\ for the polyanalytic functional calculus}

In the Riesz-Dunford functional calculus the main tool to show the product rule and to study the Riesz projectors is the resolvent equation. In order to recall this equation we need to introduce some notations. Let $A$ be a complex operator defined on a complex Banach space, we denote by $ R(\gamma,A):=(\gamma I-A)^{-1}$, the resolvent operator for the holomorphic functional calculus. The resolvent equation is given by
\begin{equation}
\label{hres}
R(\lambda,A) R(\mu,A)= \frac{R(\lambda,A)-R(\mu,A)}{\mu- \lambda}, \qquad \lambda, \, \mu \in \rho(A),
\end{equation}
where $ \rho(A)$ is the resolvent set of the operator $A$.
The main properties of the resolvent equation are the following
\begin{itemize}
\item The product of two different resolvent operators $R(\lambda,A) R(\mu,A)$ is transformed into the difference $R(\lambda,A)-R(\mu,A)$.
\item The difference of resolvent operators $R(\lambda,A)-R(\mu,A)$ is entangled with the Cauchy kernel $1/(\mu- \lambda)$ of the holomorphic functions.
\item The holomorphicity is maintained both in $ \lambda$ and in $ \mu \in \rho(A)$.
\end{itemize}
In this section we want to address the following problem
\begin{prob}
Is it possible to show a resolvent equation for the $P_2$-functional calculus on the $S$-spectrum that enjoys similar properties of the holomorphic resolvent equation?  
\end{prob}
In order to answer to this question it is essential the following result.
\begin{theorem}
Let $T\in \mathcal{BC}(X)$. For $q,s\in\rho_S(T)$, with $s\notin [q]$ the following equation holds
\begin{equation}\label{preseq}
\begin{split}
S^{-1}_R(s,T)\mathcal P^L_2(q,T)&+\mathcal P^R_2(s,T)S^{-1}_L(q,T)-4\mathcal Q_{c,s}(T)^{-1}\underline T \mathcal{Q}_{c,q}(T)^{-1}\\
& = [(\mathcal P^R_2(s,T)-\mathcal P_2^L(q,T))q-\bar s(\mathcal P_2^R(s,T)-\mathcal P_2^L(q,T))]\mathcal Q_s(q)^{-1},
\end{split}
\end{equation}
where $ \mathcal{Q}_{s}(q):= q^2-2s_0q+|q|^2$ and $ \underline{T}=T_1e_1+T_2e_2+T_3e_3$.
\end{theorem}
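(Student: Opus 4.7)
The plan is to reduce the claimed identity to the $S$-resolvent equation (Theorem \ref{Sres}) via a rewriting of $\mathcal P^L_2$ and $\mathcal P^R_2$ in terms of the $F$-resolvents plus a pseudo-$S$-resolvent correction. Starting from the definition $\mathcal P^L_2(s,T) = T_0 F_L(s,T) - F_L(s,T) s$ and applying Theorem \ref{qf}, namely $F_L(s,T) s - T F_L(s,T) = -4 \mathcal Q_{c,s}(T)^{-1}$, together with $T = T_0 + \underline T$, one obtains the useful form
$$
\mathcal P^L_2(s,T) = 4\mathcal Q_{c,s}(T)^{-1} - \underline T\, F_L(s,T),\qquad \mathcal P^R_2(s,T) = 4\mathcal Q_{c,s}(T)^{-1} - F_R(s,T)\,\underline T,
$$
the second identity being proved analogously using $s F_R(s,T) - F_R(s,T) T = -4\mathcal Q_{c,s}(T)^{-1}$.

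Substituting these identities into the left-hand side of the claimed equation yields four terms from $S^{-1}_R(s,T) \mathcal P^L_2(q,T) + \mathcal P^R_2(s,T) S^{-1}_L(q,T)$: two ``clean'' terms of the form $4 S^{-1}_R(s,T) \mathcal Q_{c,q}(T)^{-1}$ and $4\mathcal Q_{c,s}(T)^{-1} S^{-1}_L(q,T)$, and two mixed terms $-S^{-1}_R(s,T) \underline T F_L(q,T)$ and $-F_R(s,T) \underline T S^{-1}_L(q,T)$. Using the identities $F_L(q,T) = -4 S^{-1}_L(q,T)\mathcal Q_{c,q}(T)^{-1}$ and $F_R(s,T) = -4\mathcal Q_{c,s}(T)^{-1} S^{-1}_R(s,T)$ available in the commutative setting, the mixed terms reorganize into $\mathcal Q_{c,s}(T)^{-1}$- and $\mathcal Q_{c,q}(T)^{-1}$-decorated versions of the product $S^{-1}_R(s,T) S^{-1}_L(q,T)$ (together with a lone $\underline T$ factor), to which the $S$-resolvent equation of Theorem \ref{Sres} applies. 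An analogous expansion of the right-hand side via $\mathcal P^R_2(s,T) - \mathcal P^L_2(q,T) = 4[\mathcal Q_{c,s}(T)^{-1} - \mathcal Q_{c,q}(T)^{-1}] + \underline T F_L(q,T) - F_R(s,T)\underline T$, followed by the evaluation of $[(\cdot)q - \bar s(\cdot)]\mathcal Q_s(q)^{-1}$, produces the matching combination.

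The main obstacle is the bookkeeping of non-commutation between the quaternion scalars $s,q,\bar s$ and the imaginary operator $\underline T$. Although all components of $T$ commute with one another and $\underline T$ commutes with the real operators $\mathcal Q_{c,\cdot}(T)^{-1}$, it does not commute with quaternion scalars; every time $\underline T$ is transposed past $s$, $q$ or $\bar s$ a nontrivial commutator appears. The correction term $-4 \mathcal Q_{c,s}(T)^{-1} \underline T \mathcal Q_{c,q}(T)^{-1}$ in the statement is precisely what is required to absorb the commutator defect produced when reducing the mixed $\underline T$-terms through the $S$-resolvent equation, without leaving spurious remainders. Verifying that a single correction term suffices, and tracking its exact left/right placement, is the technical heart of the proof and explains why the resolvent equation for the $P_2$-functional calculus is a twisted version of the $S$-resolvent equation reflecting polyanalyticity of order $2$.
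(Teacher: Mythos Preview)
Your strategy is a genuine alternative to the paper's. The paper proceeds by multiplying the $S$-resolvent equation \eqref{sresc} on the right by $4\mathcal Q_{c,q}(T)^{-1}q$ and by $-4T_0\mathcal Q_{c,q}(T)^{-1}$, and on the left by $4\mathcal Q_{c,s}(T)^{-1}s$ and by $-4T_0\mathcal Q_{c,s}(T)^{-1}$, then sums; this directly assembles $-F_L(q,T)q + T_0 F_L(q,T) = \mathcal P_2^L(q,T)$ and $-sF_R(s,T) + T_0 F_R(s,T) = \mathcal P_2^R(s,T)$ on one side, and leaves a residual block which is simplified (their Step VIII) to produce the correction $4\mathcal Q_{c,s}(T)^{-1}\underline T\,\mathcal Q_{c,q}(T)^{-1}$. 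You instead start from the identities $\mathcal P_2^L = 4\mathcal Q_{c,\cdot}^{-1} - \underline T\, F_L$ and $\mathcal P_2^R = 4\mathcal Q_{c,\cdot}^{-1} - F_R\,\underline T$ (this is exactly Lemma \ref{l1}, which the paper proves \emph{after} the theorem) and substitute on both sides. Both routes ultimately reduce to the $S$-resolvent equation; your decomposition is conceptually cleaner but front-loads all the algebra into one large matching of terms.

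There is, however, a concrete error and a genuine gap. The error: your claim that ``$\underline T$ commutes with the real operators $\mathcal Q_{c,\cdot}(T)^{-1}$'' is false, because $\mathcal Q_{c,s}(T) = s^2\mathcal I - 2sT_0 + T\bar T$ is \emph{not} a real operator --- it involves the quaternionic scalar $s$, and $\underline T$ does not commute with $s$. Several of the rearrangements you sketch (sliding $\underline T$ past $\mathcal Q_{c,s}(T)^{-1}$ or $\mathcal Q_{c,q}(T)^{-1}$) are therefore not licit as stated. The gap: you never actually perform the computation. Your last paragraph is commentary, not proof --- you say that ``verifying that a single correction term suffices \dots\ is the technical heart'' without supplying that verification. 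To turn your outline into a proof you must, after substitution, write every term on both sides in the sandwiched form $\mathcal Q_{c,s}(T)^{-1}(\,\cdot\,)\mathcal Q_{c,q}(T)^{-1}$ using $S^{-1}_L(q,T) = (q-\bar T)\mathcal Q_{c,q}(T)^{-1}$, $S^{-1}_R(s,T) = \mathcal Q_{c,s}(T)^{-1}(s-\bar T)$, and then check that the middle factors match. This is feasible, but it is precisely the calculation the paper carries out explicitly in its Step VIII, and you have not done it.
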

\begin{proof}
We divide the proof in nine steps. 
	
{\bf  Step I.} We multiply the $S$-resolvent equation (see \eqref{sresc}) on the right by $4\mathcal Q_{c,q}^{-1}(T)q$ and we get 
\begin{eqnarray}
\nonumber
-S^{-1}_R(s,T)F_L(q,T)q&= &[(4S^{-1}_R(s,T)\mathcal Q_{c,q}(T)^{-1}q+F_L(q,T)q)q\\
\label{rpol1}
&&-\bar s(4S^{-1}_R(s,T)\mathcal Q_{c,q}(T)^{-1}q+F_L(q,T)q)]\mathcal Q_s(q)^{-1}.
\end{eqnarray}
	
{\bf Step II.} We multiply the $S$-resolvent equation on the right by $-4T_0\mathcal Q_{c,q}^{-1}(T)$ and we obtain 
\begin{eqnarray}
	\nonumber
S^{-1}_R(s,T)T_0 F_L(q,T)&=&[(-4S^{-1}_R(s,T) T_0\mathcal Q_{c,q}(T)^{-1}-T_0 F_L(q,T))q\\
\label{rpol2}
&&-\bar s(-4S^{-1}_R(s,T) T_0\mathcal Q_{c,q}(T)^{-1}-T_0 F_L(q,T))]\mathcal Q_s(q)^{-1}.
\end{eqnarray} 
	
{\bf Step III.} We sum the equations \eqref{rpol1} and \eqref{rpol2}, we get
\begin{eqnarray}
\nonumber
S^{-1}_R(s,T)\mathcal P^L_2(q,T)&=&[-\mathcal P^L_2(q,T)q+\bar s\mathcal P^L_2(q,T)]\mathcal Q_s(q)^{-1}+4[ -S^{-1}_R(s,T)(T_0-q)\mathcal Q_{c,q}(T)^{-1}q\\ \label{rpol3}
&&+\bar s S^{-1}_R(s,T)(T_0-q)\mathcal Q_{c,q}(T)^{-1} ]\mathcal Q_s(q)^{-1}.
\end{eqnarray}

{\bf Step IV.} We multiply the $S$-resolvent equation on the left by $4\mathcal Q_{c,s}(T)^{-1}s$ and we get
\begin{eqnarray}
\nonumber
-sF_R(s,T) S^{-1}_L(q,T)&= & [(-sF_R(s,T)-4s\mathcal Q_{c,s}(T)^{-1}S^{-1}_L(q,T))q\\
\label{rpol4}
&& -\bar s(-sF_R(s,T)-4s\mathcal Q_{c,s}(T)^{-1}S^{-1}_L(q,T)) ]\mathcal Q_s(q)^{-1}.
\end{eqnarray}
{\bf Step V.} We multiply the $S$-resolvent equation on the left by $-4T_0 \mathcal Q_{c,s}(T)^{-1}$ and we get
\begin{eqnarray}
\nonumber
T_0F_R(s,T) S^{-1}_L(q,T)&=& [(T_0F_R(s,T)+4T_0\mathcal Q_{c,s}(T)^{-1}S^{-1}_L(q,T))q\\
\label{rpol5}
&&-\bar s(T_0F_R(s,T)+4T_0\mathcal Q_{c,s}(T)^{-1}S^{-1}_L(q,T)) ]\mathcal Q_s(q)^{-1}.
\end{eqnarray}
	
{\bf Step VI.} We sum the equations \eqref{rpol4} and \eqref{rpol5}, we obtain
\begin{eqnarray}
\nonumber
\mathcal P_2^R(s,T)S^{-1}_L(q,T) &=&[\mathcal P_2^R(s,T)q-\bar s\mathcal P_2^R(s,T)]\mathcal Q_s(q)^{-1}+4[\mathcal Q_{c,s}(T)^{-1}(T_0-s)S^{-1}_L(q,T)q\\ \label{eq6}
&&-\bar s\mathcal Q_{c,s}(T)^{-1}(T_0-s)S^{-1}_L(q,T)]\mathcal Q_s(q)^{-1}.
\end{eqnarray}
	
{\bf Step VII.} We sum the equations \eqref{rpol3} and \eqref{eq6}, we get
\begin{eqnarray}
\nonumber
&& S^{-1}_R(s,T)\mathcal P_2^L(q,T)+\mathcal P^R_2(s,T)S^{-1}_L(q,T)\\ \nonumber
&& =[(\mathcal P_2^R(s,T)-\mathcal P_2^L(q,T))q-\bar s(\mathcal P_2^R(s,T)-\mathcal P_2^L(q,T))]\mathcal Q_s(q)^{-1}\\
\nonumber
&&+4[(\mathcal Q_{c,s}(T)^{-1}(T_0-s)S^{-1}_L(q,T)-S^{-1}_R(s,T)(T_0-q)\mathcal Q_{c,q}(T)^{-1})q\\
\label{rpol7}
&&-\bar s(\mathcal Q_{c,s}(T)^{-1}(T_0-s)S^{-1}_L(q,T)-S^{-1}_R(s,T)(T_0-q)\mathcal Q_{c,q}(T)^{-1})]\mathcal Q_s(q)^{-1}.
\end{eqnarray}
	
{\bf Step VIII.} We manipulate the term 
\begin{eqnarray}
&&4[(\mathcal Q_{c,s}(T)^{-1}(T_0-s)S^{-1}_L(q,T)-S^{-1}_R(s,T)(T_0-q)\mathcal Q_{c,q}(T)^{-1})q\\ \label{rpol8}
&&-\bar s(\mathcal Q_{c,s}(T)^{-1}(T_0-s)S^{-1}_L(q,T)-S^{-1}_R(s,T)(T_0-q)\mathcal Q_{c,q}(T)^{-1})]\mathcal Q_s(q)^{-1}, \nonumber
\end{eqnarray}
which is in the right hand side of the equation \eqref{rpol7}. This term is the sum of the following two terms
\begin{equation}\label{s1}
\begin{split}
& 4T_0[(\mathcal Q_{c,s}(T)^{-1}S^{-1}_L(q,T)-S^{-1}_R(s,T)\mathcal Q_{c,q}(T)^{-1})q\\
& -\bar s(\mathcal Q_{c,s}(T)^{-1}S^{-1}_L(q,T)-S^{-1}_R(s,T)\mathcal Q_{c,q}(T)^{-1})]\mathcal Q_s(q)^{-1},
\end{split}
\end{equation} 
and
\begin{equation}\label{s2}
\begin{split}
& 4[(S^{-1}_R(s,T) q \mathcal Q_{c,q}(T)^{-1}-\mathcal Q_{c,s}(T)^{-1}s S^{-1}_L(q,T))q\\
& -\bar s(S^{-1}_R(s,T) q \mathcal Q_{c,q}(T)^{-1}-\mathcal Q_{c,s}(T)^{-1}s S^{-1}_L(q,T))]\mathcal Q_s(q)^{-1}.
\end{split}
\end{equation} 
Firstly, we focus on the term \eqref{s1}. By the definitions of the left and the right $S$-resolvent operators, we have
\[
\begin{split}
& \mathcal Q_{c,s}(T)^{-1}S^{-1}_L(q,T)-S^{-1}_R(s,T)\mathcal Q_{c,q}(T)^{-1}\\
& =\mathcal Q_{c,s}(T)^{-1}(q-\bar T)\mathcal Q_{c,q}(T)^{-1}-\mathcal Q_{c,s}(T)^{-1}(s-\bar T)\mathcal Q_{c,q}(T)^{-1}\\
&=\mathcal Q_{c,s}(T)^{-1}(q-s)\mathcal Q_{c,q}(T)^{-1}.
\end{split}
\]
Thus the term \eqref{s1} can be rewritten in the following way
\begin{eqnarray}
\nonumber
&& 4T_0[(\mathcal Q_{c,s}(T)^{-1}S^{-1}_L(q,T)-S^{-1}_R(s,T)\mathcal Q_{c,q}(T)^{-1})q\\ 
\nonumber
&& -\bar s(\mathcal Q_{c,s}(T)^{-1}S^{-1}_L(q,T)-S^{-1}_R(s,T)\mathcal Q_{c,q}(T)^{-1})]\mathcal Q_s(q)^{-1}\\
\nonumber
&&= 4T_0[(\mathcal Q_{c,s}(T)^{-1}(q-s)\mathcal Q_{c,q}(T)^{-1})q-\bar s(\mathcal Q_{c,s}(T)^{-1}(q-s)\mathcal Q_{c,q}(T)^{-1})]\mathcal Q_s(q)^{-1}\\
\nonumber
&&=4T_0[\mathcal Q_{c,s}(T)^{-1}(-sq+q^2+|s|^2-\bar sq)\mathcal Q_{c,q}(T)^{-1}]\mathcal Q_s(q)^{-1}=4T_0[\mathcal Q_{c,s}(T)^{-1}\mathcal Q_s(q)\mathcal Q_{c,q}(T)^{-1}]\mathcal Q_s(q)^{-1}\\
\label{4a}
&&=4T_0\mathcal Q_{c,s}(T)^{-1}\mathcal Q_{c,q}(T)^{-1}.
\end{eqnarray}
Now we focus on the term \eqref{s2}. By the definitions of the left and the right $S$-resolvent operators, we have
\[
\begin{split}
&S^{-1}_R(s,T) q \mathcal Q_{c,q}(T)^{-1}-\mathcal Q_{c,s}(T)^{-1}s S^{-1}_L(q,T)\\
&=\mathcal Q_{c,s}(T)^{-1}(s-\bar T) q \mathcal Q_{c,q}(T)^{-1}-\mathcal Q_{c,s}(T)^{-1}s (q-\bar T) \mathcal Q_{c,q}(T)^{-1}\\
&=-\mathcal Q_{c,s}(T)^{-1}(\bar T q - s\bar T) \mathcal Q_{c,q}(T)^{-1}.
\end{split}
\]
Thus the term \eqref{s2} can be rewritten in the following way
\begin{eqnarray}
\nonumber
&& 4[(S^{-1}_R(s,T) q \mathcal Q_{c,q}(T)^{-1}-\mathcal Q_{c,s}(T)^{-1}s S^{-1}_L(q,T))q\\
\nonumber
&& -\bar s(S^{-1}_R(s,T) q \mathcal Q_{c,q}(T)^{-1}-\mathcal Q_{c,s}(T)^{-1}s S^{-1}_L(q,T))]\mathcal Q_s(q)^{-1}\\
\nonumber
&&=-4[(\mathcal Q_{c,s}(T)^{-1}(\bar T q - s\bar T) \mathcal Q_{c,q}(T)^{-1})q-\bar s(\mathcal Q_{c,s}(T)^{-1}(\bar T q - s\bar T) \mathcal Q_{c,q}(T)^{-1})]\mathcal Q_s(q)^{-1}\\
\nonumber
&&=-4\mathcal Q_{c,s}(T)^{-1}(\bar Tq^2-s\bar Tq-\bar s\bar Tq+|s|^2\bar T)\mathcal Q_{c,q}(T)^{-1}\mathcal Q_s(q)^{-1}\\
\label{4b}
&& =-4\mathcal Q_{c,s}(T)^{-1}\bar T\mathcal Q_s(q)\mathcal Q_{c,q}(T)^{-1}\mathcal Q_{s}(q)^{-1}=-4\mathcal Q_{c,s}(T)^{-1}\bar T\mathcal Q_{c,q}(T)^{-1}.
\end{eqnarray}
In conclusion by \eqref{4a} and \eqref{4b} we can write 

\begin{eqnarray}
\nonumber
&4[(\mathcal Q_{c,s}(T)^{-1}(T_0-s)S^{-1}_L(q,T)-S^{-1}_R(s,T)(T_0-q)\mathcal Q_{c,q}(T)^{-1})q\\
\nonumber
&-\bar s(\mathcal Q_{c,s}(T)^{-1}(T_0-s)S^{-1}_L(q,T)-S^{-1}_R(s,T)(T_0-q)\mathcal Q_{c,q}(T)^{-1})]\mathcal Q_s(q)^{-1}\\
\label{sp1}
&=4T_0\mathcal Q_{c,s}(T)^{-1}\mathcal Q_{c,q}(T)^{-1}-4\mathcal Q_{c,s}(T)^{-1}\bar T\mathcal Q_{c,q}(T)^{-1}=4\mathcal Q_{c,s}(T)^{-1}\underline T\mathcal Q_{c,q}(T)^{-1}.
\end{eqnarray}
	
{\bf Step IX.} Finally, by \eqref{sp1} and \eqref{rpol7} we get
\[
\begin{split}
& S^{-1}_R(s,T)\mathcal P^L_2(q,T)+\mathcal P^R_2(s,T)S^{-1}_L(q,T)-4\mathcal Q_{c,s}(T)^{-1}\underline T\mathcal Q_{c,q}(T)^{-1}\\
& = [(\mathcal P_2^R(s,T)-\mathcal P_2^L(q,T))q-\bar s(\mathcal P_2^R(s,T)-\mathcal P_2^L(q,T))]\mathcal Q_{s}(q)^{-1}.
\end{split}
\] 
\end{proof}
\begin{lemma}\label{l1}
	Let $T\in\mathcal{BC}(X)$ and let $s\in\rho_S(T)$. The pseudo $s$-resolvent operator satisfies the equations 
\begin{equation}
\label{pse1}
\mathcal Q_{c,s}(T)^{-1}=\frac 14(\mathcal P^L_2(s,T)+\underline T F_L(s,T))
\end{equation}
and 
\begin{equation}
\label{pse2}
\mathcal Q_{c,s}(T)^{-1}=\frac 14(\mathcal P_2^R(s,T)+F_R(s,T)\underline T).
\end{equation}
\end{lemma}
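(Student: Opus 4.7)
The plan is to verify each identity by a direct algebraic manipulation starting from the definitions of $\mathcal{P}^L_2(s,T)$, $\mathcal{P}^R_2(s,T)$ and the $F$-resolvent operators. Since the computation is symmetric in the two formulas, I will focus on \eqref{pse1}; the equation \eqref{pse2} is obtained by the same steps performed on the right.

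First, I would rewrite everything in terms of $T_0$, $\underline{T}$, and $\mathcal Q_{c,s}(T)^{-2}$. Recall that $\overline T=T_0-\underline T$, so
\[
F_L(s,T)=-4\bigl(s\mathcal I-\overline T\bigr)\mathcal Q_{c,s}(T)^{-2}
=-4\bigl(s\mathcal I-T_0+\underline T\bigr)\mathcal Q_{c,s}(T)^{-2},
\]
and by Definition~\ref{dbars},
\[
\mathcal P^L_2(s,T)+\underline T F_L(s,T)
=-F_L(s,T)s+T_0F_L(s,T)+\underline T F_L(s,T).
\]
The key observation I would use is that the components $T_0,T_1,T_2,T_3$ mutually commute by hypothesis ($T\in\mathcal{BC}(X)$), and hence $T_0$, $\underline T$ and $s$ all commute with $\mathcal Q_{c,s}(T)^{-2}$, because $\mathcal Q_{c,s}(T)=s^2-2sT_0+T\overline T$ only involves the real-coefficient combinations $T_0$ and $T\overline T=T_0^2-\underline T^2$.

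Using this commutativity I would collect the three summands on a common factor $\mathcal Q_{c,s}(T)^{-2}$ on the right. The mixed terms $-T_0\underline T+\underline T T_0$ and $\underline T s-\underline T s$ cancel, and what survives is
\[
\mathcal P^L_2(s,T)+\underline T F_L(s,T)=4\bigl(s^2-2T_0 s+T_0^2-\underline T^2\bigr)\mathcal Q_{c,s}(T)^{-2}.
\]
Since $T_0^2-\underline T^2=T\overline T$, the bracket equals $\mathcal Q_{c,s}(T)$, hence
\[
\mathcal P^L_2(s,T)+\underline T F_L(s,T)=4\mathcal Q_{c,s}(T)\,\mathcal Q_{c,s}(T)^{-2}=4\mathcal Q_{c,s}(T)^{-1},
\]
which is \eqref{pse1}. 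The proof of \eqref{pse2} is identical up to writing the factor $\underline T$ on the right and using $\mathcal P^R_2(s,T)=-sF_R(s,T)+T_0F_R(s,T)$.

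The only subtle point, and the one I would be most careful with, is bookkeeping of the noncommuting factors: while $s$ and $\underline T$ do not commute with each other as quaternionic (respectively Clifford) elements, they do commute individually with the scalar operator $\mathcal Q_{c,s}(T)^{-2}$, and moreover $T_0\underline T=\underline T T_0$ by the commuting-components hypothesis. This is exactly what makes the cancellations go through; without the hypothesis $T\in\mathcal{BC}(X)$ the identity would already fail at the step where $\underline T$ is pulled past $\mathcal Q_{c,s}(T)^{-2}$.
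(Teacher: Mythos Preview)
Your computation is correct and ends at the right place, but it takes a longer route than the paper and contains one misstatement. The paper's proof is three lines: from Theorem~\ref{qf} one has $4\mathcal Q_{c,s}(T)^{-1}=-F_L(s,T)s+TF_L(s,T)$; splitting $T=T_0+\underline T$ and using Definition~\ref{dbars} immediately gives $4\mathcal Q_{c,s}(T)^{-1}=\mathcal P_2^L(s,T)+\underline T F_L(s,T)$. Your expansion in terms of $\mathcal Q_{c,s}(T)^{-2}$ is really a re-derivation of Theorem~\ref{qf} inside the proof; it works, but it is not needed once that theorem is available.

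One point to correct: the assertion that $\underline T$ commutes with $\mathcal Q_{c,s}(T)^{-2}$ is false for generic $s$, because $\mathcal Q_{c,s}(T)=s^2\mathcal I-2sT_0+T\overline T$ contains the quaternionic scalars $s$ and $s^2$, which do not commute with the imaginary units in $\underline T$. Fortunately your computation never actually uses this: in the three summands $-F_L(s,T)s$, $T_0F_L(s,T)$, $\underline T F_L(s,T)$ the factor $\mathcal Q_{c,s}(T)^{-2}$ is already on the right in the last two, and for the first you only need $s$ to commute with $\mathcal Q_{c,s}(T)^{-2}$, which is true. So the argument is sound once you drop the unnecessary claim about $\underline T$ and the closing remark about ``pulling $\underline T$ past $\mathcal Q_{c,s}(T)^{-2}$''.
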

\begin{proof}
By Theorem \ref{qf} we have
	\[
	\begin{split}
		4\mathcal Q_{c,s}(T)^{-1}&=-F_L(s,T)s+TF_L(s,T)\\
		&= -F_L(s,T)s+T_0F_L(s,T)+\underline TF_L(s,T)\\
		&=\mathcal P_2^L(s,T)+\underline T F_L(s,T).
	\end{split}
	\]
	To prove the other equality in the statement we can proceed in a similar way. 
\end{proof}
By means of the previous result we can write a resolvent equation for the $P_2$-functional calculus.
\begin{theorem}
Let $T\in \mathcal{BC}(X)$. For $q$, $s\in\rho_S(T)$, with $s\notin [q]$ the following equation holds
\begin{eqnarray}
\label{ress1}
&& \! \! \! \! \! \! \! \! \! \! \! S^{-1}_R(s,T)\mathcal P^L_2(q,T)+\mathcal P^R_2(s,T)S^{-1}_L(q,T)- \frac{1}{4} \bigl(\mathcal{P}_2^R(s,T) \underline{T} \mathcal{P}_2^L(q,T)+ \mathcal{P}_2^R(s,T) \underline{T} F_L(q,T)+\\ 
\nonumber
&&\! \! \! \! \! \! \! \! \! \! \!+ F_R(s,T) \underline{T}^2 \mathcal{P}_2^L(q,T)+ F_R(s,T) \underline{T}^3 F_L(q,T)\bigl)= [(\mathcal P_2^R(s,T)-\mathcal P_2^L(q,T))q-\bar s(\mathcal P_2^R(s,T)-\mathcal P_2^L(q,T))]\mathcal Q_{s}(q)^{-1}.
\end{eqnarray}
\end{theorem}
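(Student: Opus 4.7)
The plan is to reduce the claimed identity to the resolvent equation already proved in the previous theorem (the one whose middle term is $4\mathcal Q_{c,s}(T)^{-1}\underline T\mathcal Q_{c,q}(T)^{-1}$), by using Lemma \ref{l1} to replace the two pseudo-$S$-resolvent operators by the polyanalytic and Fueter kernels. Since the right-hand side of the target identity \eqref{ress1} is literally the same as the right-hand side of equation \eqref{preseq}, the only thing to check is that
$$
4\,\mathcal Q_{c,s}(T)^{-1}\underline T\,\mathcal Q_{c,q}(T)^{-1}
\;=\;\frac 14\bigl(\mathcal P_2^R(s,T)\underline T\mathcal P_2^L(q,T)+\mathcal P_2^R(s,T)\underline T^2 F_L(q,T)+F_R(s,T)\underline T^2\mathcal P_2^L(q,T)+F_R(s,T)\underline T^3 F_L(q,T)\bigr),
$$
so the proof reduces to a purely algebraic identity between bounded commuting operators.

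First I would invoke Lemma \ref{l1} to substitute
$$\mathcal Q_{c,s}(T)^{-1}=\tfrac14\bigl(\mathcal P_2^R(s,T)+F_R(s,T)\underline T\bigr),\qquad \mathcal Q_{c,q}(T)^{-1}=\tfrac14\bigl(\mathcal P_2^L(q,T)+\underline T F_L(q,T)\bigr),$$
into the middle term of \eqref{preseq}. The factor $4$ in front combines with the two factors $\tfrac14$ to give an overall $\tfrac14$, so one obtains
$$4\mathcal Q_{c,s}(T)^{-1}\underline T\mathcal Q_{c,q}(T)^{-1}=\tfrac14\bigl(\mathcal P_2^R(s,T)+F_R(s,T)\underline T\bigr)\underline T\bigl(\mathcal P_2^L(q,T)+\underline T F_L(q,T)\bigr).$$
Next I would expand this product. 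Since $T\in\mathcal{BC}(X)$, the operator $\underline T$ commutes with itself and powers of $\underline T$ collapse to $\underline T,\underline T^2,\underline T^3$; no commutation with $F_L,F_R,\mathcal P_2^L,\mathcal P_2^R$ is needed, because the ordering of factors on each side matches what appears in \eqref{ress1}. Rearranging and substituting back into \eqref{preseq} yields the statement.

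The only potential obstacle is bookkeeping: one must be careful that the expansion of $(\mathcal P_2^R(s,T)+F_R(s,T)\underline T)\underline T(\mathcal P_2^L(q,T)+\underline T F_L(q,T))$ produces four ordered terms matching exactly those in \eqref{ress1}, including the correct powers of $\underline T$ in the mixed terms. (A direct expansion suggests the mixed coefficients are $\underline T^2$ and $\underline T^2$, so the reader should verify that the exponents on the $\mathcal P_2^R\underline T\,F_L$ summand of the printed statement are not a misprint; in any event the identity that genuinely emerges from substituting Lemma \ref{l1} into \eqref{preseq} is of precisely the stated shape, with four terms scaled by $\tfrac14$.) Once the algebraic expansion is verified, the theorem follows immediately since \eqref{preseq} has already been established.
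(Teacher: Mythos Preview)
Your proposal is correct and follows exactly the paper's own argument: substitute the two expressions from Lemma~\ref{l1} into the middle term of \eqref{preseq}, expand, and read off \eqref{ress1}. You are also right to flag the exponent on the $\mathcal P_2^R(s,T)\,\underline T\,F_L(q,T)$ term---a direct expansion gives $\underline T^2$ there, and the paper reproduces the same (apparently misprinted) exponent in its proof as in the statement.
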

\begin{proof}
From the identities \eqref{pse1} and \eqref{pse2} we obtain
\begin{eqnarray*}
4^2\mathcal Q_{c,s}(T)^{-1}\underline T\mathcal Q_{c,q}(T)^{-1} &=&  \left( \mathcal{P}_2^R(s,T)+ F_R(s,T) \underline{T}\right) \underline{T} \left( \mathcal{P}_2^L(q,T)+ \underline{T} F_L(q,T)\right)\\
&=&\mathcal{P}_2^R(s,T) \underline{T} \mathcal{P}_2^L(q,T)+ \mathcal{P}_2^R(s,T) \underline{T} F_L(q,T)+\\
&&+  F_R(s,T) \underline{T}^2 \mathcal{P}_2^L(q,T) + F_R(s,T) \underline{T}^3 F_L(q,T).
\end{eqnarray*} 
Replacing this identity in \eqref{preseq} we obtain the thesis
\end{proof}

We can write the equation \eqref{ress1} in the following way
\begin{eqnarray*}
&& S^{-1}_R(s,T)\mathcal P^L_2(q,T)+\mathcal P^R_2(s,T)S^{-1}_L(q,T)- \frac{1}{4} \bigl(\mathcal{P}_2^R(s,T) \underline{T} \mathcal{P}_2^L(q,T)+ \mathcal{P}_2^R(s,T) \underline{T} F_L(q,T) \\ 
\nonumber
&& +F_R(s,T) \underline{T}^2 \mathcal{P}_2^L(q,T)+ F_R(s,T) \underline{T}^3 F_L(q,T)\bigl)=[\mathcal P_2^R(s,T)-\mathcal P_2^L(q,T)]*_{s, left} S^{-1}_L(q,s) .
\end{eqnarray*}
This equation can be considered a resolvent equation for the $P_2$-functional calculus. The main differences and the major similarities with the holomorphic resolvent equation are listed below.
\begin{itemize}
\item Due to the noncommutative setting there are two different $\mathcal{\overline{D}}$-kernel operators $ \mathcal{P}_2^L(q,T)$ and $ \mathcal{P}_2^R(s,T)$, which are right slice hyperholomorphic in $q$ and left slice hyperholomorphic in $s$, respectively.  
\item The difference $ \mathcal{P}_2^R(s,T)- \mathcal{P}_2^L(q,T)$ is suitably multiplied by the Cauchy kernel of the slice hyperholomorphic functions
\item The term 
$$ [\mathcal P_2^R(s,T)-\mathcal P_2^L(q,T)]*_{s, left} S^{-1}_L(q,s)$$
is equal not only to the product of the $ \mathcal{P}$-resolvent operators but also to other terms:
\begin{itemize}
\item the commutative version of the $S$-resolvent operators, 
\item the $F$-resolvent operators 
\end{itemize}
\item The resolvent equation preserves the slice hyperholomorphicity on the right in $s$ and on the left in $p$. 
\end{itemize} 
As it happens for the holomorphic functional calculus the resolvent equation is crucial to obtain a product formula. Before to go through this we need to recall the following technical result, see \cite{CGKBOOK}.
\begin{lemma}
	\label{app}
	Let $B \in \mathcal{B}(X)$. Let $G$ be an axially symmetric domain and assume $f \in N(G)$. Then for $p \in G$, we have
	$$ \frac{1}{2 \pi} \int_{\partial (G \cap \mathbb{C}_J)} f(s) ds_J (\bar{s}B-Bp)(p^2-2s_0p+|s|^2)^{-1}=Bf(p).$$
\end{lemma}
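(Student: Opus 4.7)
The plan is to exploit the intrinsic nature of $f$ and reduce the identity to the case of monomials, following the standard template in the quaternionic operator-valued Cauchy calculus.

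First I would use the Cauchy integral theorem to deform the contour. The integrand on the left-hand side is right slice hyperholomorphic in $s$ on $\mathbb{H}\setminus[p]$, so the integral is unchanged if $G$ is replaced by $B_r(0)$ for any $r>|p|$ with $\overline{B_r(0)}$ contained in $G$ and in the domain of $f$. Since $f\in N(G)$ is intrinsic, it admits a power series expansion $f(s)=\sum_{n=0}^{\infty} s^n a_n$ with real coefficients $a_n\in\mathbb{R}$, converging uniformly on $\partial(B_r(0)\cap\mathbb{C}_J)$. By $\mathbb{R}$-linearity of both sides and termwise integration it then suffices to verify, for each $n\geq 0$,
\[
\frac{1}{2\pi}\int_{\partial(B_r(0)\cap\mathbb{C}_J)} s^n\,ds_J\,(\overline{s}B-Bp)(p^2-2s_0 p+|s|^2)^{-1}= B p^n.
\]

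Second, I would establish the $B$-twisted Laurent expansion
\[
(\overline{s}B-Bp)\mathcal{Q}_s(p)^{-1}=\sum_{k=0}^{\infty} s^{-k-1}\,B\,p^k,
\]
uniformly convergent on $|s|=r$. This mirrors the scalar identity $(\overline{s}-p)\mathcal{Q}_s(p)^{-1}=\sum_k s^{-k-1}p^k$ and is verified by multiplying on the right by $\mathcal{Q}_s(p)=p^2-2s_0 p+|s|^2$: the telescoping in powers of $p$ proceeds exactly as in the scalar case because $p^k$ sits to the right of $B$ and $B$ is right $\mathbb{H}$-linear, so $B p^k\cdot p^j=B p^{k+j}$; meanwhile the scalars $s^{-k-1}$ and $|s|^2$ can be moved freely on the left. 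The surviving combinations assemble into $\overline{s}B-Bp$ via the identities $s+\overline{s}=2s_0$ and $s\overline{s}=|s|^2$.

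Third, inserting this expansion into the integral and integrating termwise, the orthogonality $\frac{1}{2\pi}\int_{\partial(B_r(0)\cap\mathbb{C}_J)} s^{n-k-1}\,ds_J=\delta_{n,k}$ leaves only the $k=n$ summand, yielding $Bp^n$ and concluding the monomial case. The general case then follows by linearity and uniform convergence of the power series, and reinstating the original domain $G$ is accomplished by reversing the initial contour deformation.

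The main obstacle will be the second step, establishing the $B$-twisted expansion rigorously. The subtlety is that because $B$ is merely right $\mathbb{H}$-linear, it does not commute with left multiplication by non-real quaternions; the expansion succeeds precisely because, after using $p\mathcal{Q}_s(p)^{-1}=\mathcal{Q}_s(p)^{-1}p$, the powers of $p$ arrive on the right of $B$ where right linearity applies, while the negative powers of $s$ act from the left without crossing $B$.
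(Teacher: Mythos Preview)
The paper does not actually prove Lemma~\ref{app}; it only states the result and cites \cite{CGKBOOK}. So there is no ``paper's own proof'' to compare against, and your proposal must be judged on its own merits.

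Your strategy---contour deformation, power-series reduction to monomials, the $B$-twisted Laurent expansion $(\bar sB-Bp)\mathcal Q_s(p)^{-1}=\sum_{k\ge0}s^{-k-1}Bp^k$, and orthogonality---is sound, and the telescoping verification you sketch for the expansion is correct. One small slip: in your first step you call the integrand ``right slice hyperholomorphic in $s$''. For the Cauchy integral theorem (Theorem~\ref{CIT}) in the form $\int g\,ds_J\,h=0$ you need $g=f$ \emph{right} slice hyperholomorphic and the kernel $h(s)=(\bar sB-Bp)\mathcal Q_s(p)^{-1}$ \emph{left} slice hyperholomorphic in $s$; the latter is what your Laurent series actually exhibits.

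There is, however, a genuine gap in generality. Your argument hinges on finding $r>|p|$ with $\overline{B_r(0)}\subset G$, so that you can deform the contour to a circle about the origin and invoke the Taylor expansion $f(s)=\sum_n s^n a_n$. The hypotheses of the lemma only say $G$ is an axially symmetric domain with $p\in G$; nothing forces $0\in G$, let alone $\overline{B_{|p|+\varepsilon}(0)}\subset G$. In the applications of this lemma inside the paper (e.g.\ Theorems~\ref{t1}, \ref{Pr4}) the domain $G_2$ is a slice Cauchy neighbourhood of $\sigma_S(T)$ with no reason to contain the origin, so this is not a cosmetic objection. To close the gap you could either (i) observe that, by Cauchy's theorem, the integral is unchanged if $G$ is shrunk to any slice Cauchy neighbourhood of $[p]$, then use a Runge-type approximation of $f\in N(G)$ by intrinsic polynomials uniformly on $\partial(G\cap\mathbb C_J)$ and pass to the limit, or (ii) give a direct argument that avoids the global Taylor expansion at $0$ altogether, which is the route taken in \cite{CGKBOOK}.
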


\begin{theorem}\label{t1}
	Let $T\in\mathcal{BC}(X)$ and assume $f\in N(\sigma_S(T))$. If $g\in SH_L(\sigma_S(T))$, then we have
	\begin{equation}\label{prl}
		\overline{\mathcal D}(fg)(T)=f(T)(\overline{\mathcal D}g)(T)+(\overline{\mathcal D}f)(T)g(T)-\mathcal D(f)(T)\underline T \mathcal D(g)(T).
	\end{equation}
	If $g\in SH_R(\sigma_S(T))$, then we have
	\begin{equation}\label{prr}
		\overline{\mathcal D}(gf)(T)=g(T)(\overline{\mathcal D}f)(T)+(\overline{\mathcal D}g)(T)f(T)-\mathcal D(g)(T)\underline T \mathcal D(f)(T).
	\end{equation}
\end{theorem}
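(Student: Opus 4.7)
The plan is to exploit the resolvent equation \eqref{ress1} in the same spirit as one proves the product rule for the $S$-functional calculus: construct a single iterated double integral which can be evaluated in two ways, so that one side reproduces the right-hand side of \eqref{prl} and the other produces $\overline{\mathcal D}(fg)(T)$. Concretely, I will fix nested slice Cauchy domains $U_q\subset U_s$ with $\sigma_S(T)\subset U_q$, $\overline{U}_q\subset U_s$ and $\overline{U}_s\subset\fdom(f)\cap\fdom(g)$, pick $J\in\mathbb{S}$, and consider
\[
\mathcal{J}:=\frac{1}{(2\pi)^2}\int_{\partial(U_s\cap\mathbb{C}_J)}\int_{\partial(U_q\cap\mathbb{C}_J)} f(s)\,ds_J\,\mathcal{R}(s,q,T)\,dq_J\,g(q),
\]
where $\mathcal{R}(s,q,T)$ is allowed to be either side of \eqref{ress1}. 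I will focus on \eqref{prl}, since \eqref{prr} follows by a mirror-symmetric argument in which the left Cauchy representation of $g$ is replaced by the right one.

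Inserting the left-hand side of \eqref{ress1} into $\mathcal{R}$ splits $\mathcal{J}$ into three pieces whose $s$- and $q$-integrations decouple. The piece associated with $S^{-1}_R(s,T)\mathcal{P}_2^L(q,T)$ factorizes directly as $f(T)(\overline{\mathcal D}g)(T)$ by Definitions \ref{Sfun} and \ref{qfun}. The piece associated with $\mathcal{P}_2^R(s,T)S^{-1}_L(q,T)$ factorizes as $(\overline{\mathcal D}f)(T)\,g(T)$, where the intrinsicity of $f$ together with Lemma \ref{inte4} lets me identify the outer integral with $(\overline{\mathcal D}f)(T)$ in its right form. The third piece, $-\tfrac{1}{4}M(s,q,T)$, collapses by Lemma \ref{l1} to $-4\,\mathcal{Q}_{c,s}(T)^{-1}\underline{T}\,\mathcal{Q}_{c,q}(T)^{-1}$; factoring the $s$- and $q$-integrals and invoking Definition \ref{qfun1} gives exactly $-\mathcal{D}(f)(T)\underline{T}\mathcal{D}(g)(T)$ (the constants $4$, $-1/\pi$ and $1/(2\pi)^2$ match). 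So this evaluation produces the right-hand side of \eqref{prl}.

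For the second evaluation I insert the right-hand side $E(s,q,T)$ of \eqref{ress1} and split $E=E_R-E_L$ with $E_L=(\mathcal{P}_2^L(q,T)q-\bar s\,\mathcal{P}_2^L(q,T))\mathcal{Q}_s(q)^{-1}$ and $E_R=(\mathcal{P}_2^R(s,T)q-\bar s\,\mathcal{P}_2^R(s,T))\mathcal{Q}_s(q)^{-1}$. For the $-E_L$ contribution I perform the $s$-integral first; since $\mathcal{P}_2^L(q,T)$ is independent of $s$, Lemma \ref{app} with $B=\mathcal{P}_2^L(q,T)$ and $p=q$ immediately yields $\mathcal{P}_2^L(q,T)f(q)$. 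Because $f(q)\in\mathbb{C}_J$ commutes with $dq_J\in\mathbb{C}_J$, the outer $q$-integration then reconstructs $\overline{\mathcal D}(fg)(T)$ via Definition \ref{qfun}. For the $E_R$ contribution I perform the $q$-integral first; the $s$-dependent factors $\mathcal{P}_2^R(s,T)$ and $\bar s$ pull out of the $q$-integration, leaving inner integrals of the form $\int_{\partial(U_q\cap\mathbb{C}_J)} q^k\mathcal{Q}_s(q)^{-1}\,dq_J\,g(q)$ with $k\in\{0,1\}$. A direct Cauchy--Riemann check on a slice shows $\mathcal{Q}_s(q)^{-1}$ is intrinsic slice hyperholomorphic in $q$ with singularities only on $[s]$, and axial symmetry of $U_s$, $U_q$ together with $s\in\partial U_s$ and $\overline{U}_q\subset U_s$ gives $[s]\cap\overline{U}_q=\emptyset$; hence both integrands are right slice hyperholomorphic on a neighborhood of $\overline{U}_q$ and Theorem \ref{CIT} makes each one vanish. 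Consequently $\mathcal{J}=\overline{\mathcal D}(fg)(T)$, which combined with the first evaluation yields \eqref{prl}.

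The most delicate part of the argument will be the operator-scalar and Fubini manipulations: the integrands are operator-valued while $ds_J,dq_J\in\mathbb{C}_J$, so care is needed when pulling constants in and out of the double integrals. Choosing the two contours to share the same imaginary unit $J$ (as in the well-posedness proof of the $P_2$-calculus in Section 3) simplifies these commutations, and the vanishing of the $E_R$ contribution rests on the axial-symmetry argument that isolates $[s]$ from $\overline{U}_q$. Once these points are handled, equation \eqref{prr} is obtained by repeating the whole argument with the roles of left and right slice hyperholomorphic Cauchy formulas, $P_2$-kernels and $F$-kernels reversed.
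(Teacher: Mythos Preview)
Your proposal is correct and follows essentially the same route as the paper's proof. The only cosmetic difference is that you cite the resolvent equation in the form \eqref{ress1} and then immediately collapse the four-term block $-\tfrac14 M(s,q,T)$ back to $-4\,\mathcal{Q}_{c,s}(T)^{-1}\underline{T}\,\mathcal{Q}_{c,q}(T)^{-1}$ via Lemma \ref{l1}, whereas the paper works directly with equation \eqref{preseq}; after this collapse the two arguments are identical (same nested domains, same use of Theorem \ref{intri} for the intrinsic $f$, same vanishing of the $\mathcal{P}_2^R$-terms by Cauchy's theorem applied to $q^k\mathcal{Q}_s(q)^{-1}$, and the same application of Lemma \ref{app} with $B=\mathcal{P}_2^L(q,T)$).
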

\begin{proof}
	Let $G_1$ and $G_2$ be two bounded slice Cauchy domains such that  they contain the $S$-spectrum and $\overline G_1\subset G_2$ and $\overline G_2\subset \operatorname{dom}(f)\cap\operatorname{dom}(g)$. We choose $p\in\partial(G_1\cap\cc_J)$ and $s\in\partial(G_2\cap\cc_J)$. For every $J\in\mathbb S$, from the definitions of the $P_2$-functional calculus, of the $S$-functional calculus, of the $Q$-functional calculus and since $f$ is intrinsic by Theorem \ref{intri}, we get 
	\[
	\begin{split}
		&\overline{\mathcal D}(fg)(T)=f(T)(\overline{\mathcal D}g)(T)+(\overline{\mathcal D}f)(T)g(T)-\mathcal D(f)(T)\underline T \mathcal D(g)(T)\\
		&=\frac{1}{(2\pi)^2} \int_{\partial(G_2\cap\cc_J)}f(s)ds_J S^{-1}_R(s,T)\int_{\partial(G_1\cap\cc_J)} \mathcal P_2^L(p,T)dp_Jg(p)\\
		&+\frac{1}{(2\pi)^2} \int_{\partial(G_2\cap\cc_J)}f(s)ds_J \mathcal P_2^R(s,T) \int_{\partial(G_1\cap\cc_J)} S^{-1}_L(p,T) dp_Jg(p)\\
		&-\frac 1{\pi^2}\left( \int_{\partial(G_2\cap\cc_J)} f(s)  ds_J \mathcal Q_{c,s}(T)^{-1}\right)\underline T\left( \int_{\partial(G_1\cap\cc_J)}\mathcal Q_{c,p}(T)^{-1} dp_J g(p) \right)\\
		&=\frac 1{(2\pi)^2} \int_{\partial(G_2\cap\cc_J)}\int_{\partial(G_1\cap\cc_J)}f(s)ds_J[S^{-1}_R(s,T)\mathcal P^L_2(p,T)+\mathcal P^R_2(s,T)S^{-1}_L(p,T)\\
		&\,\,\,\,\,\,\,\,\,\,\,\,\,\,\quad\quad\quad\quad\quad\quad\quad\quad\quad\quad\quad\quad\quad-4\mathcal Q_{c,s}(T)^{-1}\underline T\mathcal Q_{c,p}(T)^{-1}] dp_Jg(p).
	\end{split}
	\]
	Now from equation \eqref{preseq} we obtain
	\[
	\begin{split}
		& \overline{\mathcal D}(fg)(T)=f(T)(\overline{\mathcal D}g)(T)+(\overline{\mathcal D}f)(T)g(T)-\mathcal D(f)(T)\underline T \mathcal D(g)(T)\\
		&=\frac 1{(2\pi)^2}\int_{\partial(G_2\cap\cc_J)}\int_{\partial(G_1\cap\cc_J)}f(s)ds_J[(\mathcal P^R_2(s,T)-\mathcal P_2^L(p,T))p\\
		&\quad\quad\quad\quad\quad\quad\quad\quad\quad\quad\quad\quad\quad\quad-\bar s(\mathcal P^R_2(s,T)-\mathcal P_2^L(p,T))]\mathcal Q_s(p)^{-1}dp_Jg(p).
	\end{split}
	\]
	Since $p\mathcal Q_s(p)^{-1}$ and $\mathcal Q_s(p)^{-1}$ are intrinsic slice hyperholomorphic on $\overline G_1$, by the Cauchy integral formula we get 
	$$
	\frac{1}{(2\pi)^2}\int_{\partial(G_2\cap\cc_J)} f(s)ds_J\mathcal P_2^R(s,T)\int_{\partial(G_1\cap\cc_J)} p\mathcal Q_{s}(p)^{-1}dp_J g(p)=0
	$$
	and
	$$
	\frac{1}{(2\pi)^2}\int_{\partial(G_2\cap\cc_J)} f(s)ds_J \bar s\mathcal P_2^R(s,T)\int_{\partial(G_1\cap\cc_J)} \mathcal Q_{s}(p)^{-1}dp_J g(p)=0.
	$$
	Therefore, we get 
	\[
	\begin{split}
		& \overline{\mathcal D}(fg)(T)=f(T)(\overline{\mathcal D}g)(T)+(\overline{\mathcal D}f)(T)g(T)-\mathcal D(f)(T)\underline T \mathcal D(g)(T)\\
		&=\frac 1{(2\pi)^2}\int_{\partial(G_2\cap\cc_J)} f(s)ds_J\int_{\partial(G_1\cap\cc_J)}[\bar s\mathcal P_2^L(p,T)-\mathcal P_2^L(p,T)p] \mathcal Q_s(p)^{-1}dp_Jg(p).
	\end{split}
	\]
	By Fubini's theorem, Lemma \ref{app} with $B:=\mathcal P^L_2(p,T)$, and the definition of the $P_2$-functional calculus we get 
	\[
	\begin{split}
		& \overline{\mathcal D}(fg)(T)=f(T)(\overline{\mathcal D}g)(T)+(\overline{\mathcal D}f)(T)g(T)-\mathcal D(f)(T)\underline T \mathcal D(g)(T)\\
		&=\frac 1{2\pi}\int_{\partial(G_1\cap\cc_J)} \mathcal P^L_2(p,T) dp_J f(p) g(p)\\
		&=\overline{\mathcal D}(fg)(T).
	\end{split}
	\]
\end{proof}

In \cite{CDPS} a product rule for the $F$- functional calculus is proved, see Theorem \ref{Pr0}. The formula is obtained in terms of the $Q$-functional calculus, i.e. the operator $ \mathcal{D}$ is involved. In the following result we show a product rule for the $F$-functional calculus in which the $P_2$-functional calculus is involved, namely the operator $ \mathcal{\overline{D}}$ plays a role.

\begin{theorem}
	Let $T\in\mathcal{BC}(X)$ and assume $f\in N(\sigma_S(T))$ and $g\in SH_L(\sigma_S(T))$. Then we have 

	\begin{eqnarray}
\label{laplca1}
		\Delta(fg)(T)=(\Delta f)(T)g(T)+f(T)(\Delta g)(T)-\frac 14 (\overline {\mathcal D} f)(T)(\overline{\mathcal D} g)(T)\\
\nonumber
		-\frac 14 (\overline{\mathcal D} f) (T) \underline T(\Delta g)(T)-\frac 14 (\Delta f)(T)\underline T (\overline{\mathcal D} g)(T)-\frac 14(\Delta f)(T) \underline T^2(\Delta g)(T).
\nonumber
	\end{eqnarray}
\end{theorem}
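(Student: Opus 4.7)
The plan is to reduce the theorem to the already-established product rule for the $F$-functional calculus, Theorem \ref{Pr0}, which gives
$$
\Delta(fg)(T) = (\Delta f)(T)\,g(T) + f(T)\,(\Delta g)(T) - (\mathcal D f)(T)\,(\mathcal D g)(T),
$$
and then to rewrite the bilinear cross--term $(\mathcal D f)(T)(\mathcal D g)(T)$ as the four summands appearing on the right-hand side of \eqref{laplca1}. The tool for this is Lemma \ref{l1}, which expresses the commutative pseudo-$S$-resolvent as a linear combination of the $\overline{\mathcal D}$-kernel operator and the $F$-resolvent, in both a left and a right form.

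The key step is to derive two bridging identities. Substituting the left identity $4\mathcal Q_{c,s}(T)^{-1}=\mathcal P_2^L(s,T)+\underline T F_L(s,T)$ of Lemma \ref{l1} into the definition of the $Q$-functional calculus \eqref{inte1b}, and reading off $\tfrac{1}{2\pi}\!\int\! \mathcal P_2^L ds_J f$ as $(\overline{\mathcal D} g)(T)$ and $\tfrac{1}{2\pi}\!\int\! F_L ds_J f$ as $(\Delta g)(T)$, I obtain for every $g\in SH_L(\sigma_S(T))$
$$
(\mathcal D g)(T)=-\tfrac12(\overline{\mathcal D} g)(T)-\tfrac12\,\underline T\,(\Delta g)(T).
$$
For $f$, which is intrinsic, Theorem \ref{intri} guarantees that the left and right versions of the $Q$-, $F$- and $P_2$-calculi all coincide on $f$, so using instead the right identity $4\mathcal Q_{c,s}(T)^{-1}=\mathcal P_2^R(s,T)+F_R(s,T)\underline T$ I obtain the mirror identity
$$
(\mathcal D f)(T)=-\tfrac12(\overline{\mathcal D} f)(T)-\tfrac12\,(\Delta f)(T)\,\underline T.
$$
Note the asymmetric placement of $\underline T$: on the right of $(\Delta f)(T)$, on the left of $(\Delta g)(T)$.

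Multiplying the two displayed identities gives
$$
(\mathcal D f)(T)(\mathcal D g)(T)=\tfrac14\bigl[(\overline{\mathcal D} f)(T)+(\Delta f)(T)\underline T\bigr]\bigl[(\overline{\mathcal D} g)(T)+\underline T(\Delta g)(T)\bigr],
$$
and expanding the product into its four summands (using $\underline T\cdot\underline T=\underline T^2$) yields precisely the four bracketed terms appearing in \eqref{laplca1}; inserting this into the formula of Theorem \ref{Pr0} and matching the sign $-1$ with the prefactor $\tfrac14=(-\tfrac12)(-\tfrac12)$ produces the claimed identity. The main obstacle I expect is justifying the asymmetric placement of $\underline T$: this is exactly what makes the four cross-terms in \eqref{laplca1} line up in the right order, and it is crucial that the intrinsic hypothesis on $f$ (through Theorem \ref{intri}) be used to legitimately apply the right version of Lemma \ref{l1} for $f$, while keeping the left version for the merely left slice hyperholomorphic $g$.
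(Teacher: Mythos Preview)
Your proposal is correct and takes a genuinely different, more economical route than the paper.

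The paper proceeds \emph{ab initio}: it writes each of the six terms on the right-hand side of \eqref{laplca1} as a double integral over $\partial(G_2\cap\mathbb C_J)\times\partial(G_1\cap\mathbb C_J)$, then uses Lemma \ref{l1} at the level of resolvent kernels to collapse the four quarter-terms into $4\mathcal Q_{c,s}(T)^{-1}\mathcal Q_{c,p}(T)^{-1}$, invokes the $F$-resolvent identity from \cite[Lemma 7.3.2]{CGKBOOK}, and finally applies the Cauchy formula and Lemma \ref{app} to reach $\Delta(fg)(T)$. In effect the paper is re-deriving Theorem \ref{Pr0} in disguise, with the factorisation of $\mathcal Q_{c,s}^{-1}\mathcal Q_{c,p}^{-1}$ carried along throughout.

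You instead invoke Theorem \ref{Pr0} as a black box and push Lemma \ref{l1} through the \emph{integrated} calculi rather than through the kernels, obtaining the two bridging identities
\[
(\mathcal D g)(T)=-\tfrac12(\overline{\mathcal D}g)(T)-\tfrac12\,\underline T\,(\Delta g)(T),\qquad
(\mathcal D f)(T)=-\tfrac12(\overline{\mathcal D}f)(T)-\tfrac12\,(\Delta f)(T)\,\underline T,
\]
and then simply multiply them out. This is shorter and conceptually cleaner: it isolates the new content (the expression of the cross-term $(\mathcal D f)(T)(\mathcal D g)(T)$ via the $P_2$- and $F$-calculi) from the machinery already encapsulated in Theorem \ref{Pr0}. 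The paper's approach, by contrast, has the advantage of being self-contained and of making the resolvent-level structure explicit, which is consistent with the overall emphasis of Section~4. One small point: for the $P_2$-calculus the left/right coincidence for intrinsic $f$ is recorded separately as Lemma \ref{inte4} rather than in Theorem \ref{intri}, so you should cite that as well when justifying the right-hand placement of $\underline T$ in the identity for $(\mathcal D f)(T)$.
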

\begin{proof}
	Let $G_1$ and $G_2$ be two bounded slice Cauchy domains like in the proof of Theorem \ref{t1}. Let us consider $p \in \partial(G_1 \cap \mathbb{C}_J)$ and $s \in \partial(G_2 \cap \mathbb{C}_J)$. Then by the definitions of the $F$-functional calculus, of the $S$-functional calculus, of the $P_2$-functional calculus and from the fact that $f$ is intrinsic, see Theorem \ref{intri}, we get 
	\[
	\begin{split}
		& (\Delta f)(T)g(T)+f(T)(\Delta g)(T)-\frac 14 (\overline {\mathcal D} f)(T)(\overline{\mathcal D} g)(T)\\
		& -\frac 14 (\overline{\mathcal D} f) (T) \underline T(\Delta g)(T)-\frac 14 (\Delta f)(T)\underline T (\overline{\mathcal D} g)(T)-\frac 14(\Delta f)(T) \underline T^2(\Delta g)(T)\\
		&= \frac 1{(2\pi)^2}\int_{\partial(G_2\cap\cc_J)}f(s)ds_J F_R(s,T)\int_{\partial(G_1\cap\cc_J)} S^{-1}_L(p,T) dp_J g(p)\\
		&+ \frac 1{(2\pi)^2}\int_{\partial(G_2\cap\cc_J)}f(s)ds_J S^{-1}_R(s,T)\int_{\partial(G_1\cap\cc_J)} F_L(p,T) dp_J g(p)\\
		&-\frac 1{4 (2\pi)^2}\left[ \int_{\partial(G_2\cap\cc_J)}f(s)ds_J \mathcal P_2^R(s,T)\int_{\partial(G_1\cap\cc_J)} \mathcal P_2^L(p,T) dp_J g(p) \right. \\
		&+\int_{\partial(G_2\cap\cc_J)}f(s)ds_J \mathcal P_2^R(s,T) \underline T \int_{\partial(G_1\cap\cc_J)} F_L(p,T) dp_J  g(p)\\
		&+\int_{\partial(G_2\cap\cc_J)}f(s)ds_J F_R(s,T) \underline T \int_{\partial(G_1\cap\cc_J)} \mathcal P_2^L(p,T) dp_J  g(p)\\
		&\left. +\int_{\partial(G_2\cap\cc_J)}f(s)ds_J F_R(s,T) \underline T^2 \int_{\partial(G_1\cap\cc_J)} F_L(p,T) dp_J  g(p) \right]\\
		&=\frac{1}{(2\pi)^2} \int_{\partial(G_2\cap\cc_J)}\int_{\partial(G_1 \cap\cc_J)} f(s)ds_J[F_R(s,T) S^{-1}_L(p,T)+S^{-1}_R(s,T)F_L(p,T)\\
		&-\frac 14 \mathcal P^R_2(s,T)\mathcal P_2^L(p,T)-\frac 14 \mathcal P^R_2(s,T)\underline T F_L(p,T) -\frac 14 F_R(s,T) \underline T \mathcal P_2^L(p,T)\\
		& -\frac 14 F_R(s,T) \underline T^2 F_L(p,T) ]  dp_Jg(p).
	\end{split}
	\]
	From Lemma \ref{l1} we get
\begin{eqnarray*}
&& \frac{1}{4} \bigl(P^R_2(s,T)\mathcal P_2^L(p,T)+ \mathcal P^R_2(s,T)\underline T F_L(p,T) +F_R(s,T) \underline T \mathcal P_2^L(p,T)+ F_R(s,T) \underline T^2 F_L(p,T) \bigl)\\
&&= 4\mathcal Q_{c,s}(T)^{-1}\mathcal Q_{c,p}(T)^{-1}.
\end{eqnarray*}
Therefore, we get
	\[
	\begin{split}
		& (\Delta f)(T)g(T)+f(T)(\Delta g)(T)-\frac 14 (\overline {\mathcal D} f)(T)(\overline{\mathcal D} g)(T)\\
		& -\frac 14 (\overline{\mathcal D} f) (T) \underline T(\Delta g)(T)-\frac 14 (\Delta f)(T)\underline T (\overline{\mathcal D} g)(T)-\frac 14(\Delta f)(T) \underline T^2(\Delta g)(T)\\
		&=\frac 1{(2\pi)^2}\int_{\partial(G_2\cap\cc_J)}\int_{\partial(G_1 \cap\cc_J)} f(s)ds_J[F_R(s,T) S^{-1}_L(p,T)+S^{-1}_R(s,T)F_L(p,T)\\
		&-4\mathcal Q_{c,s}(T)^{-1}\mathcal Q_{c,p}(T)^{-1}]  dp_Jg(p).
	\end{split}
	\]
	Now, by \cite[Lemma $7.3.2$]{CGKBOOK}, we know that
	\[
	\begin{split}
		F_R(s,T) S^{-1}_L(p,T)+S^{-1}_R(s,T) F_L(p,T) -4\mathcal Q_{c,s}(T)^{-1}\mathcal Q_{c,p}(T)^{-1}\\
		=[(F_R(s,T)-F_L(s,T))p-\bar s(F_R(s,T)-F_L(p,T))]\mathcal Q_s(p)^{-1}.
	\end{split}
	\]
Therefore, we obtain 
	\[
	\begin{split}
		& (\Delta f)(T)g(T)+f(T)(\Delta g)(T)-\frac 14 (\overline {\mathcal D} f)(T)(\overline{\mathcal D} g)(T)\\
		& -\frac 14 (\overline{\mathcal D} f) (T) \underline T(\Delta g)(T)-\frac 14 (\Delta f)(T)\underline T (\overline{\mathcal D} g)(T)-\frac 14(\Delta f)(T) \underline T^2(\Delta g)(T)\\
		&=\frac 1{(2\pi)^2}\int_{\partial(G_2\cap\cc_J)}\int_{\partial(G_1 \cap\cc_J)} f(s)ds_J[(F_R(s,T)-F_L(s,T))p-\bar s(F_R(s,T)-F_L(p,T))] dp_Jg(p).
	\end{split}
	\]
	By using similar arguments of the proof of Theorem \ref{t1} i.e., the linearity of the integrals, the Cauchy integral formula and Lemma \ref{app}, we obtain 
	\[
	\begin{split}
		& (\Delta f)(T)g(T)+f(T)(\Delta g)(T)-\frac 14 (\overline {\mathcal D} f)(T)(\overline{\mathcal D} g)(T)\\
		& -\frac 14 (\overline{\mathcal D} f) (T) \underline T(\Delta g)(T)-\frac 14 (\Delta f)(T)\underline T (\overline{\mathcal D} g)(T)-\frac 14(\Delta f)(T) \underline T^2(\Delta g)(T)\\
		&=\frac 1{2\pi}\int_{\partial(G_1\cap\cc_J)} F_L(p,T)dp_J f(p)g(p)\\
		&=\Delta (fg)(T).
	\end{split}
	\]
\end{proof}


\section{Riesz projectors for the polyanalytic functional calculus}

In the Riesz-Dunford functional calculus the resolvent equation \eqref{hres} is used to study the Riesz projectors that are given by
$$ P_{\Omega}= \int_{\partial \Omega} (\lambda I-A)^{-1}d \lambda,$$
where $A$ is a complex operator on a complex Banach space and the set $ \Omega$ contains part of the spectrum. 
\\ The aim of this section is to investigate the Riesz projectors for the $P_2$-functional calculus. Before, we need some auxiliary results.
\begin{theorem}
	\label{fr2}
	Let $T\in\mathcal{BC}(X)$ with $s\in\rho_S(T)$ then we have 
	\begin{equation}\label{left1}
		\mathcal P_2^L(s,T)s-T\mathcal P_2^L(s,T)=4(S^{-1}_L(s,T)-\underline T\mathcal Q_{c,s}(T)^{-1})
	\end{equation}
	and
	\begin{equation}\label{right1}
		s\mathcal P^R_2(s,T)-\mathcal P^R_2(s,T)T=4(S^{-1}_R(s,T)-\mathcal Q_{c,s}(T)^{-1} \underline T).
	\end{equation}
\end{theorem}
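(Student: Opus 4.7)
The plan is to observe that both $F$-resolvent identities in Theorem \ref{qf} already look structurally identical to the claimed identities modulo the extra factors involving $T_0$ and $\underline T$, so the theorem should reduce to a short algebraic manipulation. Specifically, the idea is to \emph{factor} the $\overline{\mathcal D}$-kernel operators as
\[
\mathcal P^L_2(s,T)=(T_0-s)F_L(s,T), \qquad \mathcal P^R_2(s,T)=(T_0-s)F_R(s,T),
\]
which is legitimate because $s$ is a scalar, every bounded right-linear operator commutes with scalar multiplication, and in particular $s$ commutes with $F_L(s,T)$ and $F_R(s,T)$. Thus the $s$ appearing on the right in Definition \ref{dbars} can be pulled to the left.

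Once this factorization is in hand, I would compute the left-hand side of \eqref{left1} directly. Since $T\in\mathcal{BC}(X)$, the real component $T_0$ commutes with $T$, and the scalar $s$ also commutes with $T$, so the prefactor $(T_0-s)$ commutes with $T$. This gives
\[
\mathcal P_2^L(s,T)s-T\mathcal P_2^L(s,T)=(T_0-s)\bigl(F_L(s,T)s-TF_L(s,T)\bigr),
\]
and then Theorem \ref{qf} immediately yields $(T_0-s)(-4\mathcal Q_{c,s}(T)^{-1})=4(s\mathcal I-T_0)\mathcal Q_{c,s}(T)^{-1}$. The analogous computation for the right version uses the second identity in Theorem \ref{qf}: $sF_R(s,T)-F_R(s,T)T=-4\mathcal Q_{c,s}(T)^{-1}$, together with the same factorization.

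To finish, I would write $T_0=\bar T+\underline T$ so that
\[
s\mathcal I-T_0=(s\mathcal I-\bar T)-\underline T,
\]
and multiply by $\mathcal Q_{c,s}(T)^{-1}$ on the appropriate side. For \eqref{left1} this produces the left $S$-resolvent operator $S^{-1}_L(s,T)=(s\mathcal I-\bar T)\mathcal Q_{c,s}(T)^{-1}$ plus a remainder $-\underline T\mathcal Q_{c,s}(T)^{-1}$. For \eqref{right1} I use that $\underline T$ commutes with $\mathcal Q_{c,s}(T)^{-1}$ (both are polynomial expressions in the commuting components of $T$), which lets me place the factor on the right to recover $S^{-1}_R(s,T)=\mathcal Q_{c,s}(T)^{-1}(s\mathcal I-\bar T)$ and remainder $-\mathcal Q_{c,s}(T)^{-1}\underline T$.

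There is no real obstacle here: the only nontrivial point is the careful bookkeeping of the order of multiplication, in particular checking that $s$ and $T_0$ commute with the relevant operators. This commutativity is precisely what the hypothesis $T\in\mathcal{BC}(X)$ and the right-linearity of the operators provide, so the proof amounts to a four-line calculation in each case.
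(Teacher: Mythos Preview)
Your proposed factorization $\mathcal P_2^L(s,T)=(T_0-s)F_L(s,T)$ is not valid, and this is where the argument breaks down. The quaternion $s\in\rho_S(T)$ is \emph{not} a real scalar, and it does not commute with $F_L(s,T)$: recall that $F_L(s,T)=-4(s\mathcal I-\bar T)\mathcal Q_{c,s}(T)^{-2}$, and while $s$ does commute with $\mathcal Q_{c,s}(T)^{-1}$ (because $\mathcal Q_{c,s}(T)=s^2\mathcal I-2sT_0+T\bar T$ has only real operator coefficients), it does \emph{not} commute with the factor $\bar T=T_0-\underline T$, whose imaginary part carries the units $e_1,e_2,e_3$. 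For the same reason your claim that ``$s$ also commutes with $T$'' is false; indeed, the entire distinction between left and right $S$-resolvents exists precisely because $sT\ne Ts$ in general. Consequently the displayed identity
\[
\mathcal P_2^L(s,T)s-T\mathcal P_2^L(s,T)=(T_0-s)\bigl(F_L(s,T)s-TF_L(s,T)\bigr)
\]
fails: expanding both sides and subtracting leaves the nonzero commutator terms $(sF_L(s,T)-F_L(s,T)s)s+TF_L(s,T)s-sTF_L(s,T)$.

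The paper's proof avoids this trap by never moving $s$ across $F_L(s,T)$ or $T$. Instead it expands $\mathcal P_2^L(s,T)s-T\mathcal P_2^L(s,T)$ directly from Definition~\ref{dbars}, uses only the legitimate commutation $T_0T=TT_0$ (coming from $T\in\mathcal{BC}(X)$) to regroup, and then applies Theorem~\ref{qf} to obtain $4\mathcal Q_{c,s}(T)^{-1}s-4T_0\mathcal Q_{c,s}(T)^{-1}$. Only at this stage does $s$ commute with the operator in play, since $\mathcal Q_{c,s}(T)^{-1}$ has real operator coefficients; the final rewriting $s-T_0=(s-\bar T)-\underline T$ then gives the result. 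Your finishing step (splitting off $\underline T$) is fine; it is the initial factorization that cannot be justified.
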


\begin{proof}
	From the definition of the $\mathcal{\overline{D}}$-kernel operator and the following relation
	$$ F_L(s,T)s-TF_L(s,T)=-4\mathcal Q_{c,s}(T)^{-1}, $$
	we get 
	\[
	\begin{split}
		\mathcal P_2^L(s,T)s-T\mathcal P_2^L(s,T)&=(-F_L(s,T)s+T_0F_L(s,T))s-T(-F_L(s,T)s+T_0F_L(s,T))\\
		&=(-F_L(s,T)s+TF_L(s,T))s+T_0(F_L(s,T)s-TF_L(s,T))\\
		&= 4\mathcal Q_{c,s}(T)^{-1}s-4T_0\mathcal Q_{c,s}(T)^{-1}\\
		&=4(s-T_0+\underline T)\mathcal Q_{c,s}(T)^{-1}-4\underline T\mathcal Q_{c,s}(T)^{-1}\\
		&=4S_L^{-1}(s,T)-4\underline T\mathcal Q_{c,s}(T)^{-1}.
	\end{split}
	\]
	The equation \eqref{right1} follows by similar arguments.
\end{proof}
In the following result we provide a suitable generalization of the previous result.
\begin{theorem}\label{tgenreseq}
	Let $T\in\mathcal{BC}(X)$ with $s\in \rho_S(T)$ and set 
	$$ \mathcal A^L_m(s,T):=4\sum_{i=0}^{m-1} T^iS^{-1}_L(s,T)s^{m-i-1} $$
	and
	$$ \mathcal B_m^L(s,T):= 4\sum_{i=0}^{m-1}T^i\underline T\mathcal Q_{c,s}(T)^{-1} s^{m-i-1}. $$
	Then for $m\geq 1$, we have the following equation
	\begin{equation}\label{genpreseq1} 
		\mathcal P_2^L(s,T)s^m-T^m\mathcal P_2^L(s,T)=\mathcal A_m^L(s,T)-\mathcal B_m^L(s,T).
	\end{equation}
	Similarly 
	\begin{equation}\label{genpreseq2} 
		s^m \mathcal P^R_2(s,T)-\mathcal P^R_2(s,T)T^m=\mathcal A_m^R(s,T)-\mathcal B_m^R(s,T),
	\end{equation}
	where 
	$$
	\mathcal A^R_m(s,T):=4\sum_{i=0}^{m-1} s^{m-i-1} S^{-1}_R(s,T) T^i
	$$
	and 
	$$
	\mathcal B^R_m(s,T):=4\sum_{i=0}^{m-1} s^{m-i-1} \mathcal Q_{c,s}(T)^{-1} \underline T T^i.
	$$
\end{theorem}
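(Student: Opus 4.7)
The plan is to prove \eqref{genpreseq1} by induction on $m$, with Theorem \ref{fr2} providing the base case $m=1$. The inductive step rests on a telescoping identity that splits the commutator-like quantity on the left hand side into two pieces, one that reduces to the base case and one that reduces to the inductive hypothesis.

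First I would verify the base case: with the convention that the empty sum (when $m=1$) just leaves the single term $i=0$, one has $\mathcal A_1^L(s,T)=4S_L^{-1}(s,T)$ and $\mathcal B_1^L(s,T)=4\underline T\,\mathcal Q_{c,s}(T)^{-1}$, so \eqref{genpreseq1} at $m=1$ is exactly \eqref{left1} from Theorem \ref{fr2}.

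For the inductive step, assuming \eqref{genpreseq1} holds for some $m\geq 1$, I would write the telescoping decomposition
\begin{equation*}
\mathcal P_2^L(s,T)s^{m+1}-T^{m+1}\mathcal P_2^L(s,T)=\bigl[\mathcal P_2^L(s,T)s-T\mathcal P_2^L(s,T)\bigr]s^m+T\bigl[\mathcal P_2^L(s,T)s^m-T^m\mathcal P_2^L(s,T)\bigr].
\end{equation*}
Applying \eqref{left1} to the first bracket and the inductive hypothesis to the second, I obtain
\begin{equation*}
4S_L^{-1}(s,T)s^m-4\underline T\,\mathcal Q_{c,s}(T)^{-1}s^m+T\mathcal A_m^L(s,T)-T\mathcal B_m^L(s,T).
\end{equation*}
A reindexing $j=i+1$ in the sum defining $\mathcal A_m^L(s,T)$ gives $T\mathcal A_m^L(s,T)=4\sum_{j=1}^{m}T^{j}S_L^{-1}(s,T)s^{m-j}$, so combining with the leading $4S_L^{-1}(s,T)s^m$ (the $j=0$ term) yields $\mathcal A_{m+1}^L(s,T)$. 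The technical point to flag here is that the analogous manipulation for $\mathcal B_m^L(s,T)$ requires $T\cdot T^i\underline T=T^{i+1}\underline T$, which is valid because $T\in\mathcal{BC}(X)$ has mutually commuting components, hence $T$ and $\underline T$ commute; after the same reindexing one recovers $\mathcal B_{m+1}^L(s,T)$, closing the induction.

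The right-hand identity \eqref{genpreseq2} is proved by the mirror telescoping
\begin{equation*}
s^{m+1}\mathcal P_2^R(s,T)-\mathcal P_2^R(s,T)T^{m+1}=s^m\bigl[s\mathcal P_2^R(s,T)-\mathcal P_2^R(s,T)T\bigr]+\bigl[s^m\mathcal P_2^R(s,T)-\mathcal P_2^R(s,T)T^m\bigr]T,
\end{equation*}
using \eqref{right1} and the inductive hypothesis, with the same commutativity remark for the $\mathcal B_m^R$ term. I do not anticipate a real obstacle in this proof: the only subtle point is the commutativity of $T$ with $\underline T$ inside $\mathcal BC(X)$, which must be invoked explicitly but is immediate from the hypothesis on $T$.
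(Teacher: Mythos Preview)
Your proof is correct and is essentially the same as the paper's: both argue by induction on $m$ with \eqref{left1} as the base case, and both reduce the inductive step to the same reindexing of $T\mathcal A_m^L$ and $T\mathcal B_m^L$. Your telescoping identity is just a slightly cleaner packaging of the paper's manipulation (the paper writes $T^m\mathcal P_2^L=T\cdot T^{m-1}\mathcal P_2^L$, applies the hypothesis, then uses \eqref{left1} on $T\mathcal P_2^L\,s^{m-1}$, which unwinds to exactly your decomposition). One small remark: the step $T\cdot(T^i\underline T)=T^{i+1}\underline T$ that you flag as requiring commutativity of $T$ and $\underline T$ is in fact just associativity, so no appeal to $\mathcal{BC}(X)$ is needed there.
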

\begin{proof}
	We will show only formula \eqref{genpreseq1} because formula \eqref{genpreseq2} follows by similar computations. We prove the result by induction on $m$. If $m=1$ we have by formula \eqref{left1}
	\[
	\begin{split}
		\mathcal P_2^L(s,T) s-T\mathcal P_2^L(s,T)&= 4(S^{-1}_L(s,T)-\underline T\mathcal Q_{c,s}(T)^{-1}) \\
		&=\mathcal A^L_1(s,T)-\mathcal B_1^L(s,T).
	\end{split}
	\]
	Now, we assume that the equation holds for $m-1$ and we will prove it for $m$. By inductive hypothesis we have  
	\begin{equation}\label{nova4}
		\begin{split}
			T^m\mathcal P_2^L(s,T)&=TT^{m-1}\mathcal P_2^L(s,T)=T(\mathcal P_2^L(s,T)s^{m-1}-\mathcal A_{m-1}^L(s,T)+\mathcal B^L_{m-1}(s,T))\\
			&=T\mathcal P_2^L(s,T)s^{m-1}-T\mathcal A_{m-1}^L(s,T)+T\mathcal B^L_{m-1}(s,T).
		\end{split}
	\end{equation}
	By using formula \eqref{left1}, we obtain
	\begin{equation}\label{nova1}
		T\mathcal P_2^L(s,T)s^{m-1}=\mathcal P_2^L(s,T)s^m-4S^{-1}_L(s,T)s^{m-1}+4\underline T\mathcal Q_{c,s}(T)^{-1}s^{m-1}.
	\end{equation}
	Moreover, we have
	\begin{equation}\label{nova2}
		\begin{split}
			T\mathcal A^L_{m-1}(s,T)&=4\sum_{i=0}^{m-2}T^{i+1}S^{-1}_L(s,T)s^{m-i-2}\\
			&=4\sum_{\ell=1}^{m-1}T^{\ell}s^{-1}_L(s,T) S^{m-\ell-1}
		\end{split}
	\end{equation}
	and
	\begin{equation}\label{nova3}
		\begin{split}
			T\mathcal B^L_{m-1}(s,T)&=4\sum_{i=0}^{m-2}T^{i+1}\underline T \mathcal Q_{c,s}(T)^{-1}s^{m-i-2}\\
			&=4\sum_{\ell=1}^{m-1}T^{\ell} \underline T\mathcal Q_{c,s}(T)^{-1}s^{m-\ell-1}.
		\end{split}
	\end{equation}
	Eventually, by inserting formulas \eqref{nova1}, \eqref{nova2} and \eqref{nova3} in \eqref{nova4}, we get 
	\[
	\begin{split}
		T^m\mathcal P_2^L(s,T)& =\mathcal P_2^L(s,T)s^m-4S^{-1}_L(s,T)s^{m-1}+4\underline T\mathcal Q_{c,s}(T)^{-1}s^{m-1}\\
		&-4\sum_{\ell=1}^{m-1}T^{\ell}S^{-1}_L(s,T)s^{m-\ell-1}+4\sum_{\ell=1}^{m-1} T^{\ell}\underline T \mathcal Q_{c,s}(T)^{-1}s^{m-\ell-1}\\
		&=\mathcal P_2^L(s,T)s^m-4\sum_{\ell=0}^{m-1} T^{\ell}S^{-1}_L(s,T) s^{m-\ell-1}+4\sum_{\ell=0}^{m-1} T^\ell\underline T\mathcal Q_{c,s}(T)^{-1}s^{m-\ell-1}\\
		&=\mathcal P_2^L(s,T)s^m-\mathcal A_m^L(s,T)+\mathcal B^L_m(s,T).
	\end{split}
	\]
\end{proof}
\begin{remark}
	Using the relation $2\underline T=T- \bar{T}$, we can also write the term $\mathcal B_m^L(s,T)$ of Theorem \ref{tgenreseq} in the following way
	$$ \mathcal B_m^L(s,T)=2 \sum_{i=1}^m T^{i+1} \mathcal{Q}_{c,s}(T)^{-1}s^{m-i-1}-2 \bar{T} \sum_{i=0}^{m-1} T^i \mathcal{Q}_{c,s}(T)^{-1} s^{m-i-1}.$$
\end{remark}

\begin{remark}
Similar equations, like the one in Theorem \ref{fr2}, have been considered for the $S$-functional calculus, $F$-functional calculus and $Q$-functional calculus, see \cite{CDPS,CSS3}:
\newline
\begin{itemize}
\item $S$ functional calculus:  \qquad $S^{-1}_L(s,T)s- TS^{-1}(s,T)= \mathcal{I}.$
\newline
\item $Q$-functional calculus: \qquad $\mathcal{Q}_{c,s}(T)^{-1}s- \bar{T} \mathcal{Q}_{c,s}(T)^{-1}=S^{-1}_L(s,T).$
\newline
\item $F$-functional calculus:\qquad $F_L(s,T)s-T F_L(s,T)= -4 \mathcal{Q}_{c,s}(T)^{-1}.$
\end{itemize}
For the sake of simplicity we have considered only the left resolvent operators.
\end{remark}  
To study the Riesz projectors we need the following result, which is possible to show in a similar way as \cite[Lemma 5.12]{CDPS}.

\begin{lemma}
	\label{harmo}
	Let $ T \in \mathcal{BC}(X)$ be such that $T=T_0+T_1e_1+T_2e_2$ and assume that the operators $T_0$, $T_1$, $T_2$ have real spectrum. Let  $G$ be a bounded Cauchy domain such that $ (\partial G) \cap \sigma_{S}(T)= \emptyset$. For every $J \in \mathbb{S}$ we have
	$$ \int_{\partial(G \cap \mathbb{C}_J)} \mathcal{Q}_{c,s}^{-1}(T) ds_J=0.$$
\end{lemma}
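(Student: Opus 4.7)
The plan is to follow the same strategy as the proof of Lemma \ref{mono}, writing the pseudo $S$-resolvent in terms of the $F$-resolvent operator and then applying McIntosh's monogenic functional calculus. By Theorem \ref{qf} we have the identity
\begin{equation*}
\mathcal{Q}_{c,s}(T)^{-1} = -\frac{1}{4}\bigl(F_L(s,T)\,s - T\,F_L(s,T)\bigr),
\end{equation*}
so, pulling the $s$-independent operator $T$ outside of the integral, the statement reduces to the two identities
\begin{equation*}
\int_{\partial(G\cap\mathbb{C}_J)} F_L(s,T)\,ds_J\,s = 0 \qquad \text{and} \qquad \int_{\partial(G\cap\mathbb{C}_J)} F_L(s,T)\,ds_J = 0.
\end{equation*}

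Both vanishing statements would be proved by the exact argument used in Lemma \ref{mono}. Since $T=T_0+T_1e_1+T_2e_2$ with each component having real spectrum, McIntosh's monogenic functional calculus provides a representation
\begin{equation*}
F_L(s,T)=\int_{\partial\Omega} G(\omega,T)\,\mathbf{D}\omega\,F_L(s,\omega),
\end{equation*}
for a suitable domain $\Omega$ containing the (commuting) left spectrum of $T$. By Fubini's theorem the two integrals above are reduced to the scalar integrals $\int_{\partial(G\cap\mathbb{C}_J)} F_L(s,\omega)\,ds_J\,s$ and $\int_{\partial(G\cap\mathbb{C}_J)} F_L(s,\omega)\,ds_J$, with $\omega\in\partial\Omega$. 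For $\omega\in G$ these vanish by the Fueter mapping theorem in integral form (Theorem \ref{Fueter}) applied to $f(s)=s$ and $f(s)=1$, since $\Delta(q)=0$ and $\Delta(1)=0$; for $\omega\notin G$ they vanish by the Cauchy integral theorem (Theorem \ref{CIT}), because $F_L(s,\omega)$ is right slice hyperholomorphic in $s$ with no singularity inside $G$, while $s$ and $1$ are left slice hyperholomorphic there.

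The main technical point, as in Lemma \ref{mono}, is the legitimacy of invoking McIntosh's monogenic functional calculus, which is precisely what dictates the assumption $T_3=0$ and the real-spectrum condition on $T_0,T_1,T_2$; the Fubini interchange then is routine because the joint integrand is continuous on the compact product of the contours. Combining the two vanishing identities in the expression for $\mathcal{Q}_{c,s}(T)^{-1}$ yields the claim.
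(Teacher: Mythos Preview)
Your proposal is correct and follows precisely the approach the paper intends: the paper does not spell out a proof here but refers to \cite[Lemma 5.12]{CDPS}, whose argument is the one you give---express $\mathcal{Q}_{c,s}(T)^{-1}$ via Theorem \ref{qf} in terms of $F_L(s,T)$ and then repeat verbatim the McIntosh-calculus reduction of Lemma \ref{mono} to the scalar identities $\int F_L(s,\omega)\,ds_J=0$ and $\int F_L(s,\omega)\,ds_J\,s=0$. The only cosmetic point is that one should note $\partial\Omega$ can be chosen disjoint from $\partial G$ (possible since $(\partial G)\cap\sigma_S(T)=\emptyset$), so that each $\omega\in\partial\Omega$ indeed falls into one of your two cases.
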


The interesting symmetries that appear in the equation \eqref{preseq} allow to study the Riesz projectors.

\begin{theorem}[Riesz projectors]
	\label{risz}
	Let $T=T_0+T_1e_1+T_2e_2$ and assume that the operators $T_{\ell}$, with $ \ell=0,1,2$ have real spectrum. Let $ \sigma_S(T)= \sigma_1 \cup \sigma_2$ with $ \hbox{dist}(\sigma_1, \sigma_2)>0$. Let $G_1$, $G_2 \subset \mathbb{H}$ be two bounded slice Cauchy domains such that $ \sigma_1 \subset G_1$, $ \bar{G}_1 \subset G_2$ and $\hbox{dist}(G_2, \sigma_2) >0$. Then the operator
	$$ \breve{P}_0:= \frac{1}{8 \pi} \int_{\partial(G_2 \cap \mathbb{C}_J)} s ds_J \mathcal{P}^L_2(s,T)= \frac{1}{8 \pi} \int_{\partial(G_1 \cap \mathbb{C}_J)} \mathcal{P}^R_2(p,T) dp_J p$$
	is a projection i.e.
	$$ \breve{P}_0^2= \breve{P}_0.$$
	Moreover, we have the following commutative relation with respect the operator $T$
	\begin{equation}
		\label{com}
		T \breve{P}_0= \breve{P}_0T.
	\end{equation}
\end{theorem}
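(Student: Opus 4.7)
The plan is to establish in turn: (i) the equality of the two integral expressions defining $\breve P_0$, (ii) the commutation $T\breve P_0=\breve P_0 T$, and (iii) the idempotence $\breve P_0^2=\breve P_0$. I expect step (iii) to be the main obstacle.

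For (i), the function $f(s)=s$ is intrinsic, so the right-left analogue of Lemma \ref{inte4} applied to it gives $\int_{\partial(G_2\cap\mathbb{C}_J)} s\,ds_J\,\mathcal P^L_2(s,T)=\int_{\partial(G_2\cap\mathbb{C}_J)} \mathcal P^R_2(s,T)\,ds_J\,s$. Since $\mathcal P^R_2(\cdot,T)$ is right slice hyperholomorphic on $\rho_S(T)$ and the annular region $G_2\setminus\overline{G_1}$ is contained in $\rho_S(T)$ (because $\sigma_1\subset G_1$ and $\mathrm{dist}(G_2,\sigma_2)>0$), Theorem \ref{CIT} lets one deform the contour from $\partial(G_2\cap\mathbb{C}_J)$ to $\partial(G_1\cap\mathbb{C}_J)$. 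For (ii), because $T\in\mathcal{BC}(X)$ has commuting components, a direct computation yields $T\bar T=\bar T T=T_0^2+T_1^2+T_2^2$ and $[T,\bar T]=0$. Consequently $T$ commutes with $\mathcal Q_{c,p}(T)$, with its inverse, with $F_R(p,T)=-4\mathcal Q_{c,p}(T)^{-2}(p\,\mathcal I-\bar T)$, and hence with $\mathcal P^R_2(p,T)=(T_0-p)F_R(p,T)$ (using right-linearity for the scalar $p$). Passing $T$ past this kernel inside the second integral representation of $\breve P_0$ then gives the commutation.

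For (iii) I would imitate the proof of the product rule (Theorem \ref{t1}). Combining the two representations of $\breve P_0$ produces
\begin{equation*}
\breve P_0^2=\frac{1}{(8\pi)^2}\int_{\partial(G_1\cap\mathbb{C}_J)}\int_{\partial(G_2\cap\mathbb{C}_J)} \mathcal P^R_2(p,T)\,dp_J\,p\,s\,ds_J\,\mathcal P^L_2(s,T).
\end{equation*}
The strategy is to reshape the integrand, via the identities of Theorem \ref{fr2}, so as to exhibit the left-hand side of the resolvent equation \eqref{preseq}, namely $S^{-1}_R(p,T)\mathcal P^L_2(s,T)+\mathcal P^R_2(p,T)S^{-1}_L(s,T)-4\mathcal Q_{c,p}(T)^{-1}\underline T\mathcal Q_{c,s}(T)^{-1}$. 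Invoking \eqref{preseq} then transforms the integrand into the slice-Cauchy-kernel form $[(\mathcal P^R_2(p,T)-\mathcal P^L_2(s,T))s-\bar p(\mathcal P^R_2(p,T)-\mathcal P^L_2(s,T))]\mathcal Q_p(s)^{-1}$. Lemma \ref{app} and the relative position $\overline{G_1}\subset G_2$ then close the argument: for fixed $p\in\partial G_1\subset G_2$, Lemma \ref{app} (with $B=\mathcal P^R_2(p,T)$ and $f\equiv 1$) collapses the $s$-integral over $\partial G_2$ onto $\mathcal P^R_2(p,T)$, while for fixed $s\in\partial G_2$ lying outside $G_1$, the $p$-integral of the slice Cauchy kernel against $\mathcal P^L_2(s,T)$ over $\partial G_1$ vanishes; the surviving contribution reassembles into $\breve P_0$.

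The main obstacle is the preparatory algebraic step in (iii): the raw product $\mathcal P^R_2(p,T)\cdot\mathcal P^L_2(s,T)$ does not match the left-hand side of \eqref{preseq} directly, and the reshaping via Theorem \ref{fr2} (for instance, using $\mathcal P^L_2(s,T)s=T\mathcal P^L_2(s,T)+4S^{-1}_L(s,T)-4\underline T\mathcal Q_{c,s}(T)^{-1}$ and its right counterpart from Theorem \ref{tgenreseq}) generates several correction terms built from $S^{-1}_{L/R}$, $F_{L/R}$, and $\mathcal Q_{c,\cdot}^{-1}$. One must verify, via the companion Cauchy-type identities of Lemmas \ref{mono} and \ref{harmo} applied on the restricted contours $\partial G_1$ and $\partial G_2$, that these correction terms either cancel pairwise or sum back into the single copy of $\breve P_0$ required by the statement.
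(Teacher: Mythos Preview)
Your plan for (iii) is essentially the paper's argument run in the opposite direction: the paper starts from the resolvent equation \eqref{preseq}, substitutes the identity $S^{-1}_R(s,T)=\tfrac14\bigl(s\,\mathcal P^R_2(s,T)-\mathcal P^R_2(s,T)T\bigr)+\mathcal Q_{c,s}(T)^{-1}\underline T$ coming from Theorem \ref{fr2}, multiplies by $p$ on the right, integrates, and then kills the correction terms with Lemmas \ref{mono} and \ref{harmo} so that only $4(2\pi)^2\breve P_0^2$ survives on the left; the right-hand side is then handled exactly as you describe via Cauchy's theorem and Lemma \ref{app}. Starting from the already-structured resolvent equation spares you the bookkeeping you flag as the main obstacle, but the ingredients are the same.

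The real gap is in your argument for (ii). You claim that $T$ commutes with $\mathcal Q_{c,p}(T)$, hence with $F_R(p,T)$ and $\mathcal P^R_2(p,T)$. This is false for non-real $p$. Although the components $T_\ell$ commute with one another, the operator $T=T_0+T_1e_1+T_2e_2$ has imaginary units in it and therefore does \emph{not} commute with left multiplication by a non-real quaternion: for $p\in\mathbb C_J$ with $\Im p\neq 0$ one has $e_2\,p\neq p\,e_2$, so $T(p^2\mathcal I)\neq (p^2\mathcal I)T$, and already $[T,\mathcal Q_{c,p}(T)]\neq 0$. (This noncommutativity is precisely why Theorem \ref{qf} and Theorem \ref{fr2} are nontrivial identities rather than tautologies.) The paper instead uses \eqref{left1}--\eqref{right1} to write $T\mathcal P^L_2(s,T)=\mathcal P^L_2(s,T)s-4S^{-1}_L(s,T)+4\underline T\,\mathcal Q_{c,s}(T)^{-1}$ and the analogous right identity, integrates against $ds_J\,s$, and then matches the two resulting expressions term by term via Theorem \ref{intri} and Lemma \ref{inte4}; that is the step you are missing.
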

\begin{proof}
	By Theorem \ref{fr2} we know that
	\begin{equation}
		\label{auxi}
		S^{-1}_R(s,T)= \frac{1}{4} \left(s \mathcal{P}^R_2(s,T)- \mathcal{P}^R_2(s,T)T\right)+ \mathcal{Q}_{c,s}(T)^{-1}.
	\end{equation}	
	
	Now, by substituting \eqref{auxi} in \eqref{preseq} we get
	\begin{eqnarray}
		\nonumber
		&&  \! \! \! \! \! \! \! \! \! \! \! \! \! \! \! \! \! \!
		\frac{1}{4}s \mathcal{P}^R_2(s,T) \mathcal{P}_2^L(p,T) - \frac{1}{4}\mathcal{P}^R_2(s,T)T\mathcal{P}_2^L(p,T)+ \mathcal{Q}_{c,s}(T)^{-1}\mathcal{P}_2^L(p,T) + \mathcal{P}_2^R(s,T) S^{-1}_L(p,T)\\
		\label{fr}
		&&\! \! \! \! \! \! \! \! \! \! \! \! \! -4 \mathcal{Q}_{c,s}(T)^{-1} \underline{T} \mathcal{Q}_{c,p}(T)= [ \left(\mathcal{P}^R_2(s,T)- \mathcal{P}_2^L(p,T)\right)p- \bar{s}\left(\mathcal{P}^R_2(s,T)- \mathcal{P}_2^L(p,T)\right)]\mathcal{Q}_{s}(p)^{-1}.
	\end{eqnarray}
	Now, we multiply the equation \eqref{fr} on the right by $p$, we get 
	\begin{eqnarray}
		\nonumber
		&&  \! \! \! \! \! \! \! \! \! \! \! \! \! \! \! \! \! \! \! \!\! \! \!\! \! \!\!\! \! \!
		\frac{1}{4}s \mathcal{P}^R_2(s,T) \mathcal{P}_2^L(p,T)p - \frac{1}{4}\mathcal{P}^R_2(s,T)T\mathcal{P}_2^L(p,T)p+ \mathcal{Q}_{c,s}(T)^{-1}\mathcal{P}_2^L(p,T)p + \mathcal{P}_2^R(s,T) S^{-1}_L(p,T)p\\
		\label{fr1}
		&&\! \! \! \! \! \! \! \! \! \! \! \! \! \! \! \! \! \! \! \!\! \! \!\! \! \!\!\! \! \! -4 \mathcal{Q}_{c,s}(T)^{-1} \underline{T} \mathcal{Q}_{c,p}(T)p= [ \left(\mathcal{P}^R_2(s,T)- \mathcal{P}_2^L(p,T)\right)p- \bar{s}\left(\mathcal{P}^R_2(s,T)- \mathcal{P}_2^L(p,T)\right)]\mathcal{Q}_{s}(p)^{-1}p.		
	\end{eqnarray}
	Now, we multiply formula \eqref{fr1} by $ds_J$ on the left and we integrate it on $ \partial(G_2 \cap \mathbb{C}_J)$ with respect to $ds_J$. Similarly, if we multiply formula \eqref{fr1} on the right by $dp_J$ and we integrate it on $ \partial(G \cap \mathbb{C}_J)$ with respect to $ dp_J$. Thus we obtain
	
	\begin{eqnarray*}
		&&   \! \! \! \! \! \! \! \! \! \!  \frac{1}{4}\int_{\partial(G_2 \cap \mathbb{C}_J)}s ds_J \mathcal{P}_{R}(s,T) \int_{\partial(G_1 \cap \mathbb{C}_J)} \mathcal{P}_{L}(p,T) dp_J p - \frac{1}{4}\int_{\partial(G_2 \cap \mathbb{C}_J)} ds_J \mathcal{P}_{R}(s,T) T \int_{\partial(G_1 \cap \mathbb{C}_J)} \mathcal{P}_{L}(p,T) dp_J p\\
		&&  \! \! \! \! \! \! \! \!\! \! - \int_{\partial(G_2 \cap \mathbb{C}_J)} ds_J \mathcal{Q}_{c,s}(T)^{-1} \int_{\partial(G_1 \cap \mathbb{C}_J)} \mathcal{P}_2^L(p,T) dp_Jp+ \int_{\partial(G_2 \cap \mathbb{C}_J)} ds_J \mathcal{P}_2^R(s,T) \int_{\partial(G_1 \cap \mathbb{C}_J)} S^{-1}_L(p,T) dp_Jp+\\
		&& \! \! \! \! \! \! \! \!\! \! -4\int_{\partial(G_2 \cap \mathbb{C}_J)} ds_J \mathcal{Q}_{c,s}(T)^{-1} \underline{T}\int_{\partial(G_1 \cap \mathbb{C}_J)} \mathcal{Q}_{c,p}(T)^{-1} dp_J = \int_{\partial(G_2 \cap \mathbb{C}_J)} ds_J \int_{\partial(G_1 \cap \mathbb{C}_J)} [ \left(\mathcal{P}^R_2(s,T)- \mathcal{P}_2^L(p,T)\right)p\\
		&&- \bar{s}\left(\mathcal{P}^R_2(s,T)- \mathcal{P}_2^L(p,T)\right)]\mathcal{Q}_{s}(p)^{-1} dp_Jp
	\end{eqnarray*}
	By Lemma \ref{mono} and Lemma \ref{harmo} we have
	\begin{eqnarray}
		\label{auxi2}
		&& 4 (2 \pi)^2 \breve{P}_0^2 = \int_{\partial(G_2 \cap \mathbb{C}_J)} ds_J \int_{\partial(G_1 \cap \mathbb{C}_J)} [ \left(\mathcal{P}^R_2(s,T) -\mathcal{P}_2^L(p,T)\right)p\\ 
		\nonumber
		&& -\bar{s}\left(\mathcal{P}^R_2(s,T)- \mathcal{P}_2^L(p,T)\right)]\mathcal{Q}_{s}(p)^{-1} dp_Jp.
	\end{eqnarray}
	Now, since the functions $ p \mapsto \mathcal{Q}_{s}(p)^{-1}$ and $ p \mapsto \mathcal{Q}_s(p)^{-1}$ are slice hyperholomorphic and do not have singularities inside $ \partial(G_1 \cap \mathbb{C}_J)$ by the Cauchy theorem we get
\begin{equation}
\label{Cauchy9}
\int_{\partial(G_1 \cap \mathbb{C}_J)} p \mathcal{Q}_{s}(p)^{-1} dp_J p^2= \int_{\partial(G_1 \cap \mathbb{C}_J)} \mathcal{Q}_s(p) dp_J p=0.
\end{equation}
	This implies that formula \eqref{auxi2} can be written as
	\begin{eqnarray*}
		(\breve{P}_0)^2&=& - \frac{1}{4(2 \pi)^2} \int_{\partial(G_2 \cap \mathbb{C}_J)} ds_J \int_{\partial(G_1 \cap \mathbb{C}_J)} \mathcal{P}^L_2(p,T) \mathcal{Q}_s(p)^{-1} pdp_J p\\
		&& + \frac{1}{4 (2\pi)^2} \int_{\partial(G_1 \cap \mathbb{C}_J)} ds_J \int_{\partial(G_1 \cap \mathbb{C}_J)} \bar{s} \mathcal{P}^L_2(p,T) \mathcal{Q}_s(p)^{-1} p dp_J\\
		&=& \frac{1}{4(2 \pi)^2}\int_{\partial(G_2 \cap \mathbb{C}_J)}\int_{\partial(G_1 \cap \mathbb{C}_J)} ds_J [\bar{s} \mathcal{P}^L_2(p,T)-\mathcal{P}^L_2(p,T)p] \mathcal{Q}_{s}(p)^{-1}  dp_Jp.
	\end{eqnarray*}
	By Fubini's theorem and Lemma \ref{app} with $B:=\mathcal{P}^L_2(p,T)$ we get
	$$
	\breve{P}_{0}^2= \frac{1}{2 \pi} \int_{\partial(G_1 \cap \mathbb{C}_J)}  \mathcal{P}^L_2(p,T) dp_Jp=\breve{P}_{0}.
	$$
	\newline
	Now, we want to show the commutativity relation \eqref{com}. By \eqref{left1} we know that
	$$ T \mathcal{P}^L_2(p,T)= \mathcal{P}^L_2(s,T)s -4 \left( S^{-1}_L(s,T)- \underline{T} \mathcal{Q}_{c,s}(T)^{-1}\right).$$
	From the definition of Riesz projector we get
$$
T \breve{P}_0= \frac{1}{8 \pi}\int_{\partial(G_1 \cap \mathbb{C}_J)} \mathcal{P}^L_2(s,T) ds_J s^2- \frac{1}{2 \pi}\int_{\partial(G_1 \cap \mathbb{C}_J)} S^{-1}_L(s,T) ds_J s+ \underline{T} \int_{\partial(G_1 \cap \mathbb{C}_J)} \mathcal{Q}_{c,s}(T)^{-1} ds_J s. $$
	On the other side, by \eqref{right1} 
	we obtain
	$$ \mathcal{P}^R_2(s,T)T= s \mathcal{P}^R_2(s,T)-4 \left( S^{-1}_R(s,T)- \underline{T} \mathcal{Q}_{c,s}(T)^{-1}\right).$$
	This togetehr with the definition of Riesz projectors we get
$$ \breve{P}_0 T= \frac{1}{8 \pi} \int_{\partial(G_1 \cap \mathbb{C}_J)} s^2 ds_J \mathcal{P}^R_2(s,T)- \frac{1}{2 \pi} \int_{\partial(G_1 \cap \mathbb{C}_J)} s ds_J S^{-1}_R(s,T)+ \frac{\underline{T}}{2 \pi} \int_{G_1 \cap \mathbb{C}_J} s ds_J \mathcal{Q}_{c,s}(T)^{-1}.$$
Finally, by Theorem \ref{intri} we have the statement.
\end{proof}

\begin{remark}
	Another way to study the Riesz projectors is to rewrite the projector $\breve{P}_0$ of Theorem \ref{risz} in another way. Let us consider the set $G_1$ as in the hypothesis of Theorem \ref{risz}, then by \cite[Lemma 7.4.1]{CGKBOOK} we know that
	$$ \int_{\partial(G_1 \cap \mathbb{C}_J)} F_L(s,T) ds_J s=0.$$
	This implies that
	$$ \breve{P}_0 = - \frac{1}{8 \pi}\int_{\partial(G_1 \cap \mathbb{C}_J)} F_L(s,T) ds_J s^2- \frac{T_0}{8 \pi} \int_{\partial(G_1 \cap \mathbb{C}_J)}F_L(s,T) ds_J s=- \frac{1}{8 \pi}\int_{\partial(G_1 \cap \mathbb{C}_J)} F_L(s,T) ds_J s^2:= \breve{P}.$$
	The operator $ \breve{P}$ is the Riesz projector for the $F$-functional calculus, see \cite[Thm. 7.4.2]{CGKBOOK}, namely $\breve{P}^2= \breve{P}$. Therefore, also the operator $ \breve{P}_0$ is a projector.
\end{remark}
\section{A new resolvent equation for the $Q$-functional calculus}
In this section we provide a new resolvent equation for the $Q$-functional calculus. In \cite{CDPS} the following equation is proved

\begin{eqnarray}\label{wrong}
	&s\mathcal{Q}_{c,s}(T)^{-1} \mathcal{Q}_{c,p}(T)^{-1} p-s\mathcal{Q}_{c,s}(T)^{-1} \overline T\mathcal{Q}_{c,p}(T)^{-1}\nonumber\\
	& -\mathcal{Q}_{c,s}(T)^{-1} \overline T\mathcal{Q}_{c,p}(T)^{-1} p+\mathcal{Q}_{c,s}(T)^{-1}\overline T^2\mathcal{Q}_{c,p}(T)^{-1}\nonumber\\
	&= \left[(s\mathcal{Q}_{c,s}(T)^{-1}-p\mathcal{Q}_{c,p}(T)^{-1})p-\overline s(s\mathcal{Q}_{c,s}(T)^{-1}-p\mathcal{Q}_{c,p}(T)^{-1})\right](p^2-2s_0p+|s|^2)^{-1}\\
	&+\left[(\overline T\mathcal{Q}_{c,p}(T)^{-1}-\mathcal{Q}_{c,s}(T)^{-1}\overline T)p-\overline s(\overline T\mathcal{Q}_{c,p}(T)^{-1}-\mathcal{Q}_{c,s}(T)^{-1}\overline T)\right](p^2-2s_0p+|s|^2)^{-1}.\nonumber
\end{eqnarray}
This equation can be considered as a resolvent equation for the $Q$-functional calculus because the left slice hyperholomorphicity is maintained in the variables $s$ as well as the right slice hyperholomorphicity is preserved in $p$. Furthermore in \cite{CDPS}, this equation is used to study the Riesz projectors for the $Q$-functional calculus.
However, in formula \eqref{wrong} the term $ \mathcal{Q}_{c,s}(T)^{-1}- \mathcal{Q}_{c,p}(T)^{-1}$ is not transformed into the product of $ \mathcal{Q}_{c,s}(T)^{-1}$ and $ \mathcal{Q}_{c,p}(T)^{-1}$. Therefore, one of the main properties of the resolvent equation is missing.
\\ The goal of this section is to obtain a resolvent equation for the $Q$-functional calculus in which a term of the following form
$$[\mathcal{Q}_{c,s}(T)^{-1}- \mathcal{Q}_{c,p}(T)^{-1}]*_{s, left} S^{-1}_L(s,q)$$
is transformed into the product of $\mathcal{Q}_{c,s}(T)^{-1}$ and $\mathcal{Q}_{c,p}(T)^{-1}$ and other terms involving the resolvent operators of the commutative $S$-functional calculus. Moreover we want to maintain the slice hyperholomorphicity
\\ In order to achieve this aim we need to recall a suitable modification of the classic $S$-resolvent equation, see \cite[Thm. 6.7]{ACDS2}.
\begin{theorem}
	Let $T \in \mathcal{BC}(X)$ and $B \in \mathcal{B}(X)$ such that it commutes with $T$, then we have
	\begin{eqnarray}
		\label{Bres}
		S^{-1}_R(s,T)B S^{-1}_L(p,T)&=& [\left(S^{-1}_R(s,T)B-BS^{-1}_L(p,T)\right)p+\\
		\nonumber
		&&- \bar{s}\left(S^{-1}_R(s,T)B-BS^{-1}_L(p,T)\right)] \mathcal{Q}_s(p)^{-1},
	\end{eqnarray}
	where $ \mathcal{Q}_s(p):= p^2-2s_0p+|s|^2$.
\end{theorem}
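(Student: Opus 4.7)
The plan is to deduce the identity from the classical $S$-resolvent equation (Theorem \ref{Sres}) by exploiting the hypothesis $BT=TB$. First I would observe that both $S^{-1}_R(s,T)$ and $S^{-1}_L(p,T)$ are rational expressions in $T$ alone (involving $T$, $\overline{T}$, and the scalars $s$, $p$ times the identity), so since $B$ commutes with $T$ it also commutes with $S^{-1}_R(s,T)$ and with $S^{-1}_L(p,T)$. In particular,
$$ S^{-1}_R(s,T)\,B\,S^{-1}_L(p,T) \;=\; B\,S^{-1}_R(s,T)\,S^{-1}_L(p,T). $$

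Next I would apply Theorem \ref{Sres} to the product of resolvents on the right-hand side to obtain
$$ B\,S^{-1}_R(s,T)\,S^{-1}_L(p,T) \;=\; B\,\bigl[(S^{-1}_R(s,T)-S^{-1}_L(p,T))p-\bar s(S^{-1}_R(s,T)-S^{-1}_L(p,T))\bigr]\mathcal{Q}_s(p)^{-1}. $$
Then I would distribute $B$ and use once more that $B$ commutes with both $S^{-1}_R(s,T)$ and $S^{-1}_L(p,T)$, together with the fact that $B$ slides past the external quaternionic scalars $p$ on the right and $\bar s$ on the left, to rewrite the right-hand side as
$$ \bigl[(S^{-1}_R(s,T)B-B\,S^{-1}_L(p,T))p-\bar s(S^{-1}_R(s,T)B-B\,S^{-1}_L(p,T))\bigr]\mathcal{Q}_s(p)^{-1}, $$
which is the desired equation \eqref{Bres}.

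The only point to watch is the commutation of $B$ with left scalar multiplication by $\bar s$: right linearity of $B$ gives commutation with the right factor $p$ and with the scalar $\mathcal{Q}_s(p)^{-1}$ essentially for free, while moving $B$ past $\bar s$ relies on the two-sided quaternionic module structure of $X$ together with the standing hypothesis $BT=TB$ (which in the $\mathcal{BC}(X)$ framework ties $B$ to the quaternionic structure). If one wishes to avoid even this mild subtlety, the same identity can be established self-contained by repeating the steps in the proof of Theorem \ref{Sres} with $B$ inserted in the middle of each product, using only $BT=TB$ to push $B$ across polynomials in $T$; no algebraic input beyond what already appears in the classical $S$-resolvent equation is needed.
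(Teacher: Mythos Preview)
The paper does not prove this theorem here; it is quoted from \cite{ACDS2}. So the relevant question is whether your argument stands on its own.

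Your primary approach has a genuine gap. From $BT=TB$ alone you cannot conclude that $B$ commutes with $S^{-1}_R(s,T)$ or $S^{-1}_L(p,T)$. The commutative $S$-resolvents are built from $\overline{T}$ and from \emph{left} scalar multiplication by $s$ (via $\mathcal{Q}_{c,s}(T)=s^2\mathcal{I}-s(T+\overline{T})+T\overline{T}$ and the factor $s\mathcal{I}-\overline{T}$). A right-linear $B\in\mathcal{B}(X)$ that commutes with $T$ need not commute with $\overline{T}$, nor with left multiplication by a non-real quaternion. So the very first step $S^{-1}_R(s,T)\,B\,S^{-1}_L(p,T)=B\,S^{-1}_R(s,T)\,S^{-1}_L(p,T)$ is unjustified, and the later ``only point to watch'' about sliding $B$ past $\bar s$ is not a mild subtlety but the same unproved commutation recurring.

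Your fallback suggestion---repeat the proof of Theorem~\ref{Sres} with $B$ inserted in the middle---is in fact the correct route and is how the result is established in \cite{ACDS2}. The key observation is that the standard proof uses only the relations $S^{-1}_L(p,T)p=TS^{-1}_L(p,T)+\mathcal{I}$ and $S^{-1}_R(s,T)T=sS^{-1}_R(s,T)-\mathcal{I}$ together with the fact that $2s_0$ and $|s|^2$ are \emph{real}. Inserting $B$ and using $BT=TB$ exactly once per application of these relations, one computes $S^{-1}_R(s,T)BS^{-1}_L(p,T)\mathcal{Q}_s(p)$ directly and obtains the right-hand side of \eqref{Bres} without ever needing $B$ to commute with $\overline{T}$ or with left multiplication by $\bar s$. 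You should present this computation as the proof, not as an afterthought.
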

\begin{remark}
	If we consider $B = \mathcal{I}$ in \eqref{Bres} we get \eqref{sresc}.
\end{remark}
Moreover, in order to obtain a new resolvent equation for the $Q$-functional calculus, it is crucial to write the pseudo $S$-resolvent operator in terms of the $F$-resolvent operators, see Theorem \ref{qf}.

Now, we have all the tools to obtain a new resolvent equation for the $Q$-functional calculus
\begin{theorem}
	\label{Hres}
	Let $T \in \mathcal{BC}(X)$. For $s$, $p \in \rho_S(T)$ with $s \notin [p]$ we have the following equation
	\begin{eqnarray}
		\label{qres}
		&& \mathcal{Q}_{c,s}(T)^{-1} S^{-1}_L(p,T)+ S^{-1}_R(s,T) \mathcal{Q}_{c,p}(T)^{-1}-2 \mathcal{Q}_{c,s}(T)^{-1} \underline{T} \mathcal{Q}_{c,p}(T)^{-1}=\\
		\nonumber
		&&= [(\mathcal{Q}_{c,s}(T)^{-1}- \mathcal{Q}_{c,p}(T)^{-1})p- \bar{s}(\mathcal{Q}_{c,s}(T)^{-1}- \mathcal{Q}_{c,p}(T)^{-1})] \mathcal{Q}_s(p)^{-1},
	\end{eqnarray}
	where $ \mathcal{Q}_s(p):= p^2-2s_0p+|s|^2$ and $ \underline{T}=T_1e_1+T_2e_2+T_3e_3$.
\end{theorem}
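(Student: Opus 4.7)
\emph{Proof plan.} The identity can be verified by a direct algebraic computation in which both sides reduce to the common expression $\mathcal Q_{c,s}(T)^{-1}(p+s-2T_0)\mathcal Q_{c,p}(T)^{-1}$. The tools I would use are the explicit commutative forms $S^{-1}_L(p,T)=(p\mathcal I-\overline T)\mathcal Q_{c,p}(T)^{-1}$ and $S^{-1}_R(s,T)=\mathcal Q_{c,s}(T)^{-1}(s\mathcal I-\overline T)$, the operator identity $A^{-1}-B^{-1}=A^{-1}(B-A)B^{-1}$ with $A=\mathcal Q_{c,s}(T)$ and $B=\mathcal Q_{c,p}(T)$, and a scalar identity that will account for the slice hyperholomorphic kernel $\mathcal Q_s(p)^{-1}$.

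\emph{Simplifying the LHS.} First I expand every summand using the factorizations above. Each of the three terms has $\mathcal Q_{c,s}(T)^{-1}$ on the extreme left and $\mathcal Q_{c,p}(T)^{-1}$ on the extreme right, so by distributivity and the relation $2\underline T=T-\overline T$ I obtain
\[
\text{LHS}=\mathcal Q_{c,s}(T)^{-1}\bigl[(p-\overline T)+(s-\overline T)-(T-\overline T)\bigr]\mathcal Q_{c,p}(T)^{-1}=\mathcal Q_{c,s}(T)^{-1}(p+s-2T_0)\mathcal Q_{c,p}(T)^{-1}.
\]

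\emph{Simplifying the RHS.} Using $\mathcal Q_{c,p}(T)-\mathcal Q_{c,s}(T)=(p^2-s^2)\mathcal I-2(p-s)T_0$ one writes
\[
D:=\mathcal Q_{c,s}(T)^{-1}-\mathcal Q_{c,p}(T)^{-1}=\mathcal Q_{c,s}(T)^{-1}\bigl[(p^2-s^2)-2(p-s)T_0\bigr]\mathcal Q_{c,p}(T)^{-1}.
\]
To compute $Dp-\overline s D$ I bring $p$ inside through $\mathcal Q_{c,p}(T)^{-1}$ (with which it commutes, being a scalar in its own variable) and $\overline s$ inside through $\mathcal Q_{c,s}(T)^{-1}$ (same reason), then use $\overline s s^2=|s|^2 s$, $\overline s s=|s|^2$, together with the subsidiary identity $(p-s)p-\overline s(p-s)=p^2-(s+\overline s)p+|s|^2=\mathcal Q_s(p)$ to collect the $T_0$-part. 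This yields
\[
Dp-\overline s D=\mathcal Q_{c,s}(T)^{-1}\bigl[(p^3-\overline s p^2-s^2 p+|s|^2 s)-2\mathcal Q_s(p)T_0\bigr]\mathcal Q_{c,p}(T)^{-1}.
\]
The key purely scalar identity is $(p+s)\mathcal Q_s(p)=p^3-\overline s p^2-s^2 p+|s|^2 s$; expanding the left side, the cross-term is $-p(s+\overline s)p+sp^2=(s-2s_0)p^2=-\overline s p^2$, since $s_0\in\mathbb R$ commutes with $p$. Multiplying on the right by $\mathcal Q_s(p)^{-1}$ and using that $T_0$ is real, hence commutes with both $\mathcal Q_s(p)$ and $\mathcal Q_{c,p}(T)$, the scalar bracket collapses to $(p+s)-2T_0$, so the RHS equals $\mathcal Q_{c,s}(T)^{-1}(p+s-2T_0)\mathcal Q_{c,p}(T)^{-1}$, matching the LHS.

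\emph{Main obstacle.} The principal difficulty is the careful bookkeeping of commutation relations among the non-commuting quaternions $s,\,\overline s,\,p$ and the operators $\overline T,\mathcal Q_{c,s}(T)^{-1},\mathcal Q_{c,p}(T)^{-1}$. In particular, $\mathcal Q_{c,s}(T)$ and $\mathcal Q_{c,p}(T)$ do \emph{not} commute with each other in general, nor does $\overline s$ commute with $\mathcal Q_{c,p}(T)^{-1}$, so the computation must never move one of the cross-index scalars past the opposite pseudo $S$-resolvent. All algebraic simplifications take place inside the scalar bracket $\mathcal Q_{c,s}(T)^{-1}[\,\cdot\,]\mathcal Q_{c,p}(T)^{-1}$, where only the real operator $T_0$ and the scalar quaternions $s,\overline s,p$ interact, and this is precisely what makes the identity clean despite the non-commutative setting.
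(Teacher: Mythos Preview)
Your proof is correct and takes a genuinely different, and considerably shorter, route than the paper's.

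The paper proves the identity by a ``bootstrap'' from the $S$-resolvent equation: it takes the generalized equation \eqref{Bres} with $B=T$ and $B=\mathcal I$, multiplies on the left or right by $\pm 4\mathcal Q_{c,s}(T)^{-1}$, $\pm 4\mathcal Q_{c,p}(T)^{-1}$, $\pm 4\mathcal Q_{c,p}(T)^{-1}p$, $\pm 4s\mathcal Q_{c,s}(T)^{-1}$, sums the four resulting equalities, and then in several further steps collapses the residual terms using Theorem~\ref{qf} and the definitions of the resolvents. Your proof bypasses all of this machinery: you simply insert the explicit commutative forms of $S_L^{-1}(p,T)$ and $S_R^{-1}(s,T)$ on the left-hand side and the difference formula $A^{-1}-B^{-1}=A^{-1}(B-A)B^{-1}$ on the right-hand side, and verify that both collapse to the common sandwich $\mathcal Q_{c,s}(T)^{-1}(p+s-2T_0)\,\mathcal Q_{c,p}(T)^{-1}$. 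The only nontrivial step is the scalar identity $(p+s)\mathcal Q_s(p)=p^3-\bar s p^2-s^2p+|s|^2s$, which you check directly.

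What each approach buys: your argument is elementary, self-contained, and makes it transparent \emph{why} the identity holds --- both sides are disguised forms of the same symmetric expression in $p,s,T_0$. The paper's approach, while longer, is structurally parallel to the derivations of the other resolvent equations in the paper (for the $P_2$- and $F$-calculi), all of which are obtained by manipulating the $S$-resolvent equation; this uniformity is presumably the reason the authors chose it. Your commutation bookkeeping is also accurate: the only facts needed are that $p$ commutes with $\mathcal Q_{c,p}(T)$, that $\bar s$ commutes with $\mathcal Q_{c,s}(T)$, that $T_0$ and $T\overline T$ are real, and that $\mathcal Q_s(p)$ (a polynomial in $p$ with real coefficients) commutes with $\mathcal Q_{c,p}(T)^{-1}$.
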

\begin{proof}
	We will show this result in seven steps.
	\newline
	\newline
	{\bf Step I.} We consider $B=T$ in \eqref{Bres} and we multiply it on the right by $4 \mathcal{Q}_{c,p}(T)^{-1}$, then we get
	\begin{eqnarray}
		\label{one}
		-S^{-1}_R(s,T) T F_L(p,T)&=& [\left(4 S^{-1}_R(s,T) T \mathcal{Q}_{c,p}(T)^{-1}+T F_L(p,T)\right)p+\\
		\nonumber
		&&- \bar{s}\left(4 S^{-1}_R(s,T) T \mathcal{Q}_{c,p}(T)^{-1}+T F_L(p,T)\right)] \mathcal{Q}_s(p)^{-1}.
	\end{eqnarray}
	{\bf Step II.} We consider $B= \mathcal{I}$ in \eqref{Bres} and we multiply it on the right by $-4 \mathcal{Q}_{c,p}(T)^{-1}p$, then we obtain
	\begin{eqnarray}
		\label{second}
		S^{-1}_R(s,T) F_L(s,T)p&=& [\left(-4 S^{-1}_R(s,T) \mathcal{Q}_{c,p}(T)p- F_L(p,T)p\right)p+\\
		\nonumber
		&&- \bar{s}\left(-4 S^{-1}_R(s,T) \mathcal{Q}_{c,p}(T)p- F_L(p,T)p\right)] \mathcal{Q}_s(p)^{-1}.
	\end{eqnarray}
	{\bf Step III.} We substitute $B=T$ in \eqref{Bres} and we multiply it on the left by $4 \mathcal{Q}_{c,s}(T)^{-1}$, then we get
	\begin{eqnarray}
		\label{third}
		- F_R(s,T) T S^{-1}_L(s,T)&=&[\left(- F_R(s,T)T-4 \mathcal{Q}_{c,s}(T)^{-1} T S^{-1}_L(p,T)\right)p+\\
		\nonumber
		&& - \bar{s}\left(- F_R(s,T)T-4 \mathcal{Q}_{c,s}(T)^{-1} T S^{-1}_L(p,T)\right)] \mathcal{Q}_{s}(p)^{-1}.
	\end{eqnarray}
	{\bf Step IV.} We substitute $B= \mathcal{I}$ in \eqref{Bres} and we multiply it on the left by $-4s \mathcal{Q}_{c,s}(T)^{-1}$, then we obtain
	\begin{eqnarray}
		\label{fourth}
		s F_R(s,T) S^{-1}_L(s,T)&=&[ \left(s F_R(s,T)+4 s \mathcal{Q}_{c,s}(T)^{-1} S^{-1}_L(p,T)\right)p+\\
		\nonumber
		&& - \bar{s}\left(s F_R(s,T)+4 s \mathcal{Q}_{c,s}(T)^{-1} S^{-1}_L(p,T)\right) ] \mathcal{Q}_{s}(p)^{-1}.
	\end{eqnarray}  
	{\bf Step V.} We make the sum of formulas \eqref{one}, \eqref{second}, \eqref{third}, \eqref{fourth} and by Theorem \ref{qf} we get
	\begin{eqnarray}
		\label{seventh}
		&& -4 S^{-1}_R(s,T) \mathcal{Q}_{c,p}(T)^{-1}-4 \mathcal{Q}_{c,s}(T)^{-1} S^{-1}_L(s,T) \\
		\nonumber
		&& = [ \left(4 \mathcal{Q}_{c,p}(T)^{-1}-4 \mathcal{Q}_{c,s}(T)^{-1}\right)p- \bar{s}\left(4 \mathcal{Q}_{c,p}(T)^{-1}-4 \mathcal{Q}_{c,s}(T)^{-1}\right)] \mathcal{Q}_{s}(p)^{-1}+\\
		\nonumber
		&& +4 \bigl[ \bigl(S^{-1}_R(s,T)T \mathcal{Q}_{c,p}(T)^{-1}-S^{-1}_R(s,T) \mathcal{Q}_{c,p}(T)^{-1}p- \mathcal{Q}_{c,s}(T)^{-1} T S^{-1}_L(p,T)+\\
		\nonumber
		&& +s \mathcal{Q}_{c,s}(T)^{-1} S^{-1}_L(p,T)\bigl)p- \bar{s}\bigl(S^{-1}_R(s,T)T \mathcal{Q}_{c,p}(T)^{-1}-S^{-1}_R(s,T) \mathcal{Q}_{c,p}(T)^{-1}p+\\
		\nonumber
		&& - \mathcal{Q}_{c,s}(T)^{-1} T S^{-1}_L(p,T)+s \mathcal{Q}_{c,s}(T)^{-1} S^{-1}_L(p,T) \bigl) \bigl] \mathcal{Q}_{s}(p)^{-1}.
	\end{eqnarray}
	{\bf Step VI.} We show that
	
	\begin{eqnarray}
		\label{sixth}
		&& \bigl[ \bigl(S^{-1}_R(s,T)T \mathcal{Q}_{c,p}(T)^{-1}- \mathcal{Q}_{c,s}(T)^{-1} T S^{-1}_L(p,T)\bigl)p+\\
		\nonumber
		&& - \bar{s}\bigl(S^{-1}_R(s,T)T \mathcal{Q}_{c,p}(T)^{-1} - \mathcal{Q}_{c,s}(T)^{-1} T S^{-1}_L(p,T) \bigl] \mathcal{Q}_{s}(p)^{-1}= - \mathcal{Q}_{c,s}(T)^{-1} T \mathcal{Q}_{c,p}(T)^{-1}.
	\end{eqnarray}
	
	We focus on proving formula \eqref{sixth}. First of all, we observe that by the definition of the $S$-resolvent operators we have
	\begin{eqnarray*}
		&& S^{-1}_{R}(s,T) T \mathcal{Q}_{c,p}(T)^{-1}- \mathcal{Q}_{c,s}(T)^{-1}T S^{-1}_L(p,T)\\
		&& = \mathcal{Q}_{c,s}(T)^{-1}(s \mathcal{I}- \bar{T})T \mathcal{Q}_{c,p}(T)^{-1}- \mathcal{Q}_{c,s}(T)^{-1}T(p \mathcal{I}- \bar{T}) \mathcal{Q}_{c,p}(T)^{-1}\\
		&& = \mathcal{Q}_{c,s}(T)^{-1} sT \mathcal{Q}_{c,p}(T)^{-1}- \mathcal{Q}_{c,s}(T)^{-1} Tp \mathcal{Q}_{c,p}(T)^{-1}.
	\end{eqnarray*}
	Thus we get
	\begin{eqnarray*}
		&& \bigl[ \bigl(S^{-1}_R(s,T)T \mathcal{Q}_{c,p}(T)^{-1}- \mathcal{Q}_{c,s}(T)^{-1} T S^{-1}_L(p,T)\bigl)p+\\
		&&  \, \, \, \, - \bar{s}\bigl(S^{-1}_R(s,T)T \mathcal{Q}_{c,p}(T)^{-1} - \mathcal{Q}_{c,s}(T)^{-1} T S^{-1}_L(p,T) \bigl] \mathcal{Q}_{s}(p)^{-1}\\
		&& =  \biggl(\mathcal{Q}_{c,s}(T)^{-1}sTp \mathcal{Q}_{c,p}(T)^{-1}- \mathcal{Q}_{c,s}(T)^{-1} T p^2 \mathcal{Q}_{c,p}(T)^{-1}+\\
		&& \, \, \, \, -\mathcal{Q}_{c,s}(T)^{-1} |s|^2 T \mathcal{Q}_{c,p}(T)^{-1}+ \mathcal{Q}_{c,s}(T)^{-1} \bar{s} Tp \mathcal{Q}_{c,p}(T)^{-1}\biggl) \mathcal{Q}_{s}(p)^{-1} \\
		&& = \biggl( \mathcal{Q}_{c,s}(T)^{-1}(s+ \bar{s}) Tp \mathcal{Q}_{c,p}(T)^{-1}- \mathcal{Q}_{c,s}(T)^{-1} T p^2 \mathcal{Q}_{c,p}(T)^{-1}
		- \mathcal{Q}_{c,s}(T)^{-1} |s|^2 T \mathcal{Q}_{c,p}(T)^{-1} \biggl) \mathcal{Q}_{s}(p)^{-1}\\
		&& = - \mathcal{Q}_{c,s}(T)^{-1} T \mathcal{Q}_{s}(p)\mathcal{Q}_{c,p}(T)^{-1} \mathcal{Q}_{s}(p)^{-1}\\
		&& = - \mathcal{Q}_{c,s}(T)^{-1} T \mathcal{Q}_{c,p}(T)^{-1}.
	\end{eqnarray*}
	{\bf Step VII.} The following equality follows by \eqref{4b}
	\begin{eqnarray}
		\label{fiveth}
		&&  \bigl[\bigl(s \mathcal{Q}_{c,s}(T)^{-1} S^{-1}_L(p,T) -S^{-1}_R(s,T) \mathcal{Q}_{c,p}(T)^{-1}p\bigl)p+\\
		\nonumber
		&& - \bar{s}\bigl(s \mathcal{Q}_{c,s}(T)^{-1} S^{-1}_L(p,T)-S^{-1}_R(s,T) \mathcal{Q}_{c,p}(T)^{-1}p \bigl)\bigl] \mathcal{Q}_{s}(p)^{-1}= \mathcal{Q}_{c,s}(T)^{-1} \bar{T} \mathcal{Q}_{c,p}(T)^{-1}.
	\end{eqnarray}
	\newline
	\newline
	{\bf Step VIII.} We put together \eqref{sixth} and \eqref{fiveth} to obtain
	\begin{eqnarray}
		\label{eight}
		&& \bigl[ \bigl(S^{-1}_R(s,T)T \mathcal{Q}_{c,p}(T)^{-1}-S^{-1}_R(s,T) \mathcal{Q}_{c,p}(T)^{-1}p- \mathcal{Q}_{c,s}(T)^{-1} T S^{-1}_L(p,T)+\\
		\nonumber
		&&  \, \, \, \, \,+s \mathcal{Q}_{c,s}(T)^{-1} S^{-1}_L(p,T)\bigl)p- \bar{s}\bigl(S^{-1}_R(s,T)T \mathcal{Q}_{c,p}(T)^{-1}-S^{-1}_R(s,T) \mathcal{Q}_{c,p}(T)^{-1}p+\\
		\nonumber
		&& \, \, \, \, \, - \mathcal{Q}_{c,s}(T)^{-1} T S^{-1}_L(p,T)+s \mathcal{Q}_{c,s}(T)^{-1} S^{-1}_L(p,T) \bigl) \bigl] \mathcal{Q}_{s}(p)^{-1}\\
		\nonumber
		&& = - \mathcal{Q}_{c,s}(T)^{-1}T \mathcal{Q}_{c,p}(T)^{-1}+ \mathcal{Q}_{c,s}(T) \bar{T} \mathcal{Q}_{c,p}(T)^{-1}\\
		\nonumber
		&& = -2 \mathcal{Q}_{c,s}(T)^{-1} \underline{T} \mathcal{Q}_{c,p}(T)^{-1}.
	\end{eqnarray}
	Finally by putting formula \eqref{eight} in \eqref{seventh} we get
	\begin{eqnarray*}
		&& S^{-1}_R(s,T) \mathcal{Q}_{c,p}(T)^{-1}+ \mathcal{Q}_{c,s}(T)^{-1} S^{-1}_L(p,T) =\\
		&& [\left( \mathcal{Q}_{c,s}(T)^{-1}- \mathcal{Q}_{c,p}(T)^{-1}\right)p- \bar{s}\left( \mathcal{Q}_{c,s}(T)^{-1}- \mathcal{Q}_{c,p}(T)^{-1}\right)] \mathcal{Q}_{s}(p)^{-1}+2 \mathcal{Q}_{c,s}(T)^{-1} \underline{T} \mathcal{Q}_{c,p}(T)^{-1}.
	\end{eqnarray*}
	This proves the statement.
\end{proof}
By using the resolvent equation \eqref{wrong}, in \cite{CDPS} we show the following product rule for the $Q$-functional calculus
\begin{eqnarray}
	\label{product1}
	2[\mathcal{D} \left((.) fg\right)(T)- \overline{T}\mathcal{D}(fg)(T)]&=&f(T) \mathcal{D} \left((.)g\right)(T)-f(T) \overline{T} \mathcal{D}(g)(T)+\\
	\nonumber
	&&+ \mathcal{D} \left(f(.)\right)(T) g(T)- \mathcal{D}(f)(T) \overline{T} g(T).
\end{eqnarray}
However, by using formula \eqref{qres}, it is possible to obtain a more interesting and nice formula for the product rule of the $Q$-functional calculus.

\begin{theorem}
	\label{Pr4}
	Let $T \in \mathcal{BC}(X)$. We assume that $f \in N(\sigma_S(T))$ and $ g \in SH_L(\sigma_S(T))$ then we have
	$$ \mathcal{D}(fg)(T)=f(T) (\mathcal{D}g)(T)+ (\mathcal{D}f)(T) g(T)+ (\mathcal{D}f)(T)  \underline{T} (\mathcal{D}g)(T).$$
	If $g \in SH_R(\sigma_S(T))$ then we have
\begin{equation}
\label{extraright}
\mathcal{D}(gf)(T)=f(T) (\mathcal{D}g)(T)+ (\mathcal{D}f)(T) g(T)+ (\mathcal{D}f)(T)  \underline{T} (\mathcal{D}g)(T).
\end{equation}
\end{theorem}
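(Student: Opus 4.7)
My plan is to mirror the argument used for the $P_2$ product rule (Theorem \ref{t1}), replacing the $P_2$ resolvent equation \eqref{preseq} with the new $Q$ resolvent equation \eqref{qres}. Choose two bounded slice Cauchy domains $G_1,G_2$ with $\sigma_S(T)\subset G_1$, $\overline{G_1}\subset G_2$ and $\overline{G_2}\subset \operatorname{dom}(f)\cap\operatorname{dom}(g)$; take $s\in\partial(G_2\cap\mathbb{C}_J)$ and $p\in\partial(G_1\cap\mathbb{C}_J)$. Since $f\in N(\sigma_S(T))$, by Theorem \ref{intri} both $f(T)$ and $(\mathcal{D}f)(T)$ admit left and right integral representations, so I may write $f(T)$ as an integral of $f(s)\,ds_J\, S^{-1}_R(s,T)$ and $(\mathcal{D}f)(T)$ as an integral of $f(s)\,ds_J\, \mathcal{Q}_{c,s}(T)^{-1}$, all over $\partial(G_2\cap\mathbb{C}_J)$, while $(\mathcal{D}g)(T)$ and $g(T)$ are expressed as integrals of $\mathcal{Q}_{c,p}(T)^{-1}\,dp_J\, g(p)$ and $S^{-1}_L(p,T)\,dp_J\,g(p)$ over $\partial(G_1\cap\mathbb{C}_J)$.

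Adding the three terms on the right-hand side of the claim produces a double integral whose integrand, after collecting constants, is precisely
$$
-\tfrac{1}{2\pi^2}\,f(s)\,ds_J\bigl[S^{-1}_R(s,T)\mathcal{Q}_{c,p}(T)^{-1}+\mathcal{Q}_{c,s}(T)^{-1}S^{-1}_L(p,T)-2\mathcal{Q}_{c,s}(T)^{-1}\underline{T}\mathcal{Q}_{c,p}(T)^{-1}\bigr]dp_J\,g(p).
$$
I then substitute the new resolvent equation \eqref{qres} to replace the bracketed operator expression by
$\bigl[(\mathcal{Q}_{c,s}(T)^{-1}-\mathcal{Q}_{c,p}(T)^{-1})p-\bar s(\mathcal{Q}_{c,s}(T)^{-1}-\mathcal{Q}_{c,p}(T)^{-1})\bigr]\mathcal{Q}_{s}(p)^{-1}$.

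The crucial observation (as in the proof of Theorem \ref{t1}) is that the part of the integrand containing $\mathcal{Q}_{c,s}(T)^{-1}$ vanishes: the functions $p\mapsto p\,\mathcal{Q}_{s}(p)^{-1}$ and $p\mapsto \mathcal{Q}_{s}(p)^{-1}$ are intrinsic slice hyperholomorphic on $\overline{G_1}$ (since $s\notin \overline{G_1}$), so by the Cauchy integral theorem (Theorem \ref{CIT}), paired with $g\in SH_L$, the corresponding inner $p$-integrals are zero. This leaves only the terms involving $\mathcal{Q}_{c,p}(T)^{-1}$, namely
$$
\tfrac{1}{2\pi^2}\iint f(s)\,ds_J\bigl[\bar s\mathcal{Q}_{c,p}(T)^{-1}-\mathcal{Q}_{c,p}(T)^{-1}p\bigr]\mathcal{Q}_{s}(p)^{-1}\,dp_J\,g(p),
$$
with an overall sign that becomes $-\tfrac{1}{\pi}\,\mathcal{Q}_{c,p}(T)^{-1}f(p)$ after applying Fubini and Lemma \ref{app} with $B=\mathcal{Q}_{c,p}(T)^{-1}$ (and noting that $f$, being intrinsic, commutes with $dp_J$). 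Recognising this as the defining integral of $\mathcal{D}(fg)(T)$ from Definition \ref{qfun1} finishes the left case; the right case \eqref{extraright} follows by the symmetric computation with the right representations, the intrinsic property of $f$ again ensuring the two sides match up.

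The main obstacle is bookkeeping: verifying that the coefficient $-2$ in front of $\mathcal{Q}_{c,s}(T)^{-1}\underline{T}\mathcal{Q}_{c,p}(T)^{-1}$ in \eqref{qres} matches precisely the coefficient produced by the product $(\mathcal{D}f)(T)\underline{T}(\mathcal{D}g)(T)$, which carries the factor $(-1/\pi)^2=1/\pi^2$ against the factor $-1/(2\pi^2)$ common to the other two summands. This is exactly what produces the $+$ sign on the $\underline{T}$ term, in contrast with the $-$ sign appearing in \eqref{prl}, and it is where all the cancellations conspire to yield the clean identity claimed.
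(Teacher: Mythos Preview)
Your proposal is correct and follows essentially the same route as the paper's proof: both compute the right-hand side as a double integral via the $S$- and $Q$-functional calculi (using Theorem \ref{intri} for the intrinsic $f$), substitute the new $Q$-resolvent equation \eqref{qres}, drop the $\mathcal{Q}_{c,s}(T)^{-1}$ terms by the Cauchy integral theorem since $p\mapsto p\,\mathcal{Q}_s(p)^{-1}$ and $p\mapsto \mathcal{Q}_s(p)^{-1}$ are intrinsic slice hyperholomorphic on $\overline{G_1}$, and then apply Lemma \ref{app} with $B=\mathcal{Q}_{c,p}(T)^{-1}$ to recover $\mathcal{D}(fg)(T)$. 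The only thing to tidy up is the sign in your displayed remaining integral (it should carry $-\tfrac{1}{2\pi^2}$, which then yields the correct $-\tfrac{1}{\pi}$ after Lemma \ref{app}); your final conclusion is nonetheless right.
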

\begin{proof}
	Let $G_1$ and $G_2$ be two bounded slice Cauchy domains as in the proof of Theorem \ref{t1}  
	. Let us consider $P\in\partial(G_1\cap\cc_J)$ and $s\in\partial(G_1\cap\cc_J)$. By the definitions of the $S$-functional calculus and the $Q$-functional calculus we get
	\begin{eqnarray*}
		&& f(T) (\mathcal{D}g)(T)+ (\mathcal{D}f)(T) g(T)+ (\mathcal{D}f)(T)  \underline{T} (\mathcal{D}g)(T)\\
		&& =- \frac{1}{2 \pi^2} \int_{\partial(G_2 \cap \mathbb{C}_J)} S^{-1}_L(s,T) ds_J f(s)\int_{\partial(G_1 \cap \mathbb{C}_J)} \mathcal{Q}_{c,p}(T)^{-1} dp_J  g(p)\\
		&&- \frac{1}{2 \pi^2} \int_{\partial (G_2 \cap \mathbb{C}_J)} \mathcal{Q}_{c,s}(T)^{-1} ds_J f(s) \int_{\partial (G_1 \cap \mathbb{C}_J)} S^{-1}_L(p,T) dp_J g(p)\\
		&& + \frac{1}{\pi^2} \int_{\partial(G_2 \cap \mathbb{C}_J)} \mathcal{Q}_{c,s}(T)^{-1} ds_J f(s) \underline{T} \int_{\partial(G_1 \cap \mathbb{C}_J)} \mathcal{Q}_{c,p}(T)^{-1} dp_J g(p).
	\end{eqnarray*}
	Since by hypothesis the function $f$ is intrinsic, by Theorem \ref{intri} and by Theorem \ref{Hres} we get
	\begin{eqnarray*}
		&& f(T) (\mathcal{D}g)(T)+ (\mathcal{D}f)(T) g(T)+ (\mathcal{D}f)(T)  \underline{T} (\mathcal{D}g)(T)\\
		&& =\frac{1}{2 \pi^2} \int_{\partial(G_2 \cap \mathbb{C}_J)} \int_{\partial(G_1 \cap \mathbb{C}_J)} f(s) ds_J [- S^{-1}_R(s,T) \mathcal{Q}_{c,p}(T)^{-1}- \mathcal{Q}_{c,s}(T)^{-1} S^{-1}_L(p,T)+\\
		&& \quad +2 \mathcal{Q}_{c,s} (T)^{-1} \underline{T} \mathcal{Q}_{c,p}(T)^{-1}] dp_J g(p)\\
		&&=- \frac{1}{2 \pi^2} \int_{\partial(G_2 \cap \mathbb{C}_J)} \int_{\partial( G_1 \cap \mathbb{C}_J)} f(s) ds_J [(\mathcal{Q}_{c,s}(T)^{-1}- \mathcal{Q}_{c,p}(T)^{-1})p- \bar{s}(\mathcal{Q}_{c,s}(T)^{-1}- \mathcal{Q}_{c,p}(T)^{-1})] \cdot \\
		&& \quad \cdot \mathcal{Q}_{s}(p)^{-1} dp_J g(p)\\
		&&= - \frac{1}{2\pi^2} \int_{\partial(G_2 \cap \mathbb{C}_J)} f(s) ds_J \int_{\partial(G_1 \cap \mathbb{C}_J)} \mathcal{Q}_{c,s}(T)^{-1} p \mathcal{Q}_{s}(p)^{-1} dp_J g(p)+\\
		&& \quad + \frac{1}{2\pi^2} \int_{\partial(G_2 \cap \mathbb{C}_J)} f(s) ds_J \int_{\partial(G_1 \cap \mathbb{C}_J)} \mathcal{Q}_{c,p}(T)^{-1} p \mathcal{Q}_{s}(p)^{-1} dp_J g(p)\\
		&& \quad + \frac{1}{2\pi^2} \int_{\partial(G_2 \cap \mathbb{C}_J)} f(s) ds_J \int_{\partial(G_1 \cap \mathbb{C}_J)} \bar{s}\mathcal{Q}_{c,s}(T)^{-1} \mathcal{Q}_{s}(p)^{-1} dp_J g(p)\\
		&& \quad - \frac{1}{2\pi^2} \int_{\partial(G_2 \cap \mathbb{C}_J)} f(s) ds_J \int_{\partial(G_1 \cap \mathbb{C}_J)}  \bar{s}\mathcal{Q}_{c,p}(T)^{-1}  \mathcal{Q}_{s}(p)^{-1} dp_J g(p).
	\end{eqnarray*}
	Since the maps $p \mapsto p \mathcal{Q}_{s}(p)^{-1}$ and $ p \mapsto \mathcal{Q}_{s}(p)$ are intrinsic slice hyperholomorphic on $ \bar{G}_1$, by the Cauchy integral formula we get that the first and the third integrals in the above formula are zero. By Lemma \ref{app} 
	and the definition of $Q$-functional calculus we get
	\begin{eqnarray*}
		&& f(T) (\mathcal{D}g)(T)+ (\mathcal{D}f)(T) g(T)+ (\mathcal{D}f)(T)  \underline{T} (\mathcal{D}g)(T)=\\
		&&= - \frac{1}{2 \pi^2} \int_{\partial(G_2 \cap \mathbb{C}_J)} f(s) ds_J \int_{\partial(G_1 \cap \mathbb{C}_J)} [\bar{s} \mathcal{Q}_{c,p}(T)^{-1}- \mathcal{Q}_{c,p}(T)^{-1}p] \mathcal{Q}_s(p) \,dp_J\, g(p)\\
		&&= - \frac{1}{ \pi} \int_{\partial(G_1 \cap \mathbb{C}_J) } \mathcal{Q}_{c,p}(T)^{-1} dp_J f(p) g(p)\\
		&& = - \frac{1}{ \pi} \int_{\partial(G_1 \cap \mathbb{C}_J) } \mathcal{Q}_{c,p}(T)^{-1} dp_J (fg)(p)\\
		&& =\mathcal{D}(fg)(T).
	\end{eqnarray*}
Formula \eqref{extraright} follows by similar computations.
\end{proof}

Now, we show an application of Theorem \ref{Pr4}.
\begin{lemma}\label{delta}
	Let $n \geq 1$. Then we have
	$$ \Delta(q^{n+1})=(\Delta q^n) q_0- \mathcal{\overline{D}}q^n.$$
\end{lemma}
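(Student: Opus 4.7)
The plan is to reduce the identity to a polynomial relation involving $P_n := \sum_{k=0}^{n-1} q^{n-1-k}\bar q^k$ (the expression underlying Lemma~\ref{res3b}) and then to close an induction via the classical factorization of $q^n - \bar q^n$.

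First I would apply the Leibniz rule
\[
\Delta(fg) = (\Delta f)\,g + 2\sum_{i=0}^{3}(\partial_{q_i}f)(\partial_{q_i}g) + f\,(\Delta g),
\]
which is valid in the noncommutative quaternionic setting since it only uses associativity and the scalar Leibniz rule for each $\partial_{q_i}$ (with the order of $f$ and $g$ preserved in every term), to the decomposition $q^{n+1} = q\cdot q^n$. Using $\Delta q = 0$ and
\[
\sum_{i=0}^{3}(\partial_{q_i}q)(\partial_{q_i}q^n) = \partial_{q_0}q^n + \sum_{i=1}^{3}e_i\,\partial_{q_i}q^n = \mathcal{D}(q^n),
\]
this yields $\Delta q^{n+1} = q\,\Delta q^n + 2\mathcal{D}(q^n)$. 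Writing $q = q_0 + \underline q$ and using that $q_0$ is real, the claim becomes equivalent to the auxiliary identity
\[
\underline q\,\Delta q^n + 2\mathcal{D}(q^n) + \overline{\mathcal{D}}(q^n) = 0. \qquad (\star)
\]

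Second, I would put $\mathcal{D}(q^n)$ in closed form via $\mathcal{D} + \overline{\mathcal{D}} = 2\partial_{q_0}$ combined with Lemma~\ref{res3b}, which gives $\overline{\mathcal{D}}(q^n) = 2nq^{n-1} + 2P_n$. Subtracting yields $\mathcal{D}(q^n) = -2P_n$, and inserting this into $(\star)$ reduces the proof to showing
\[
\underline q\,\Delta q^n = 2(P_n - nq^{n-1}). \qquad (\star\star)
\]

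Finally, I would prove $(\star\star)$ by induction on $n$. The base case $n=1$ is immediate since $\Delta q = 0$ and $P_1 = 1$. For the inductive step, apply $\Delta q^{n+1} = q\,\Delta q^n - 4P_n$ (from the first step together with $\mathcal{D}(q^n) = -2P_n$), multiply on the left by $\underline q$, substitute the inductive hypothesis, and exploit that $\underline q$ commutes with $q$ and $\bar q$. After simplification everything collapses to the identity $(q - 2\underline q)P_n = P_{n+1} - q^n$, which holds because $q - 2\underline q = \bar q$ and a direct index shift in the definition of $P_n$ gives the recursion $P_{n+1} = q^n + P_n\bar q$. The main obstacle is isolating the correct auxiliary $(\star\star)$ from the purely computational Leibniz step; once it is written down, its proof rests on the classical factorization $q^n - \bar q^n = (q - \bar q)P_n$, which is available here precisely because $q$ and $\bar q$ commute.
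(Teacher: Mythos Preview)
Your argument is correct. Both your proof and the paper's reduce the statement to the same auxiliary identity
\[
\underline{q}\,\Delta q^n + 2\mathcal{D}(q^n) + \overline{\mathcal{D}}(q^n) = 0,
\]
but the two routes to it, and the two proofs of it, differ. The paper reaches this identity by first applying the product rule of Theorem~\ref{Pr4} (with $T$ replaced by $q$) to obtain $\mathcal{D}(q^{n+1})=(\mathcal{D}q^n)\bar q - 2q^n$, and then applying $\overline{\mathcal{D}}$ to both sides using the Leibniz rule for $\overline{\mathcal{D}}$; you instead apply the Laplacian Leibniz rule directly to $q\cdot q^n$, which is shorter and avoids the detour through Theorem~\ref{Pr4}. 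For the auxiliary identity itself, the paper verifies it by substituting the explicit closed formula $\Delta q^n=-4\sum_{k=1}^{n-1}(n-k)q^{n-k-1}\bar q^{k-1}$ from \cite{DKS} and expanding; your inductive argument via the recursion $\Delta q^{n+1}=q\,\Delta q^n-4P_n$ and $P_{n+1}=q^n+\bar q\,P_n$ is more self-contained, since it never requires the closed form of $\Delta q^n$. The trade-off is that the paper's version exhibits Lemma~\ref{delta} as an application of the functional-calculus product rule, whereas yours is a purely elementary computation.
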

\begin{proof}
	By replacing the operator $T \in \mathcal{BC}(X)$ with a generic quaternion $q \in \mathbb{H}$ in Theorem \ref{Pr4} we get
	\begin{eqnarray*}
		\mathcal{D}(q^{n+1})&=& q \mathcal{D}(q^n)+ \mathcal{D}(q) q^n+ (\mathcal{D}q) \underline{q} (\mathcal{D} q^n)\\
		&=& q \mathcal{D}(q^n)-2 q^n-2 \underline{q} \mathcal{D}(q^n).
	\end{eqnarray*}
	Since $\mathcal{D}(q^n)=-2\sum_{k=1}^n q^{n-k} \bar{q}^{k-1}$ is real, (see \cite{B}, \cite[Remark 4.4]{CDPS}), we get that
	\begin{equation}
		\label{f1}
		\mathcal{D}(q^{n+1})= (\mathcal{D}q^n)\bar{q}-2 q^n.
	\end{equation}
	By applying the conjugate Fueter operator $ \mathcal{\overline{D}}$ to formula \eqref{f1} and by the Leibnitz formula we get
	\begin{eqnarray*}
		\Delta (q^{n+1})&=& (\Delta q^n) \bar{q}+ (\mathcal{D} q^n) (\mathcal{\overline{D}} \bar{q})-2 \mathcal{\overline{D}} q^n\\
		&=& (\Delta q^n) \bar{q}-2 (\mathcal{D} q^n) -2 \mathcal{\overline{D}} q^n\\
		&=& (\Delta q^n) q_0- \mathcal{\overline{D}} q^n-(\Delta q^n) \underline{q}-2 \mathcal{D}q^n- \mathcal{\overline{D}} q^n.
	\end{eqnarray*}
	In order to prove the statement we have to show the following equality
	\begin{equation}
		\label{F5}
		(\Delta q^n) \underline{q}+2 \mathcal{D}q^n+\mathcal{\overline{D}} q^n=0.
	\end{equation}
By \cite[Thm. 3.2]{DKS} and Lemma \ref{res3b}  we have that
	\begin{eqnarray*}
		&& (\Delta q^n) \underline{q}+2 \mathcal{D}q^n+\mathcal{\overline{D}} q^n= -4 \sum_{k=1}^{n-1} (n-k) q^{n-k-1} \bar{q}^{k-1} \left(\frac{q- \bar{q}}{2}\right)\\
		&& \quad -4 \sum_{k=1}^n q^{n-k} \bar{q}^{k-1} +2 n q^{n-1}+2 \sum_{k=1}^n q^{n-k} \bar{q}^{k-1}\\
		&& = -2 \sum_{k=1}^n (n-k) q^{n-k} \bar{q}^{k-1}+2 \sum_{k=1}^{n-1} (n-k) q^{n-k-1} \bar{q}^k\\
		&& \quad - 2 \sum_{k=1}^n q^{n-k} \bar{q}^{k-1}+2n q^{n-1}\\
		&&=  -2 \sum_{k=1}^n (n-k) q^{n-k} \bar{q}^{k-1}+2 \sum_{k=0}^{n-1} (n-k) q^{n-k-1} \bar{q}^k\\
		&& \quad - 2 \sum_{k=1}^n q^{n-k} \bar{q}^{k-1}\\
		&&=0
	\end{eqnarray*}
\end{proof}

\begin{remark}
	If in formula \eqref{laplca1} we replace the operator $T$ with a generic quaternion $q \in \mathbb{H}$ and by considering $f(q)=q^n$ and $g(q)=q$, we get the same result of Lemma \ref{delta}. 
\end{remark}

By means of the resolvent equation \eqref{qres} it is also possible to study the Riesz projectors for the $Q$-functional calculus. 
\begin{theorem}
\label{rp}
Let $T=T_0e_0+T_1e_1+T_2e_2$ and assume that the operators $T_l$, $l=0,\, 1,\, 2$, have real spectrum. Let $\sigma_S(T)=\sigma_1\cup\sigma_2$ with $\operatorname{dist}(\sigma_1,\sigma_2)>0$.

Let $G_1,\, G_2\subset\mathbb H$ be two bounded slice Cauchy domains such that $\sigma_1\subset G_1$, $\overline G_1\subset G_2$ and $\operatorname{dist}(G_2,\sigma_2)>0$. Then the operator
$$
\tilde P:=\frac 1{2\pi}\int_{\partial (G_2\cap\mathbb C_J)} s\, ds_J\mathcal{Q}_{c,s}(T)^{-1}=\frac 1{2\pi}\int_{\partial (G_1\cap\cc_J)}\mathcal{Q}_{c,p}(T)^{-1} \, dp_J p
$$
is a projection, i.e.,
$$
\tilde P^2=\tilde P.
$$
\end{theorem}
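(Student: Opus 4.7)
The plan is to identify $\tilde P$ with the Riesz projector $\hat P_1$ of the right $S$-functional calculus for the spectral component $\sigma_1$, which is classically known to be idempotent, thereby deducing $\tilde P^2=\tilde P$.

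The key algebraic step is the rearrangement
$$s\mathcal Q_{c,s}(T)^{-1} \;=\; S^{-1}_R(s,T) + T_0\mathcal Q_{c,s}(T)^{-1} - \mathcal Q_{c,s}(T)^{-1}\underline T,$$
which follows from $S^{-1}_R(s,T)=\mathcal Q_{c,s}(T)^{-1}(s\mathcal I-\bar T)$ and $\bar T=T_0-\underline T$. Here the scalar $s$, the real operator $T_0$ and the operator $\underline T$ all commute with $\mathcal Q_{c,s}(T)^{-1}$: the components $T_0,T_1,T_2$ are real operators extended $\mathbb H$-linearly to $X$, hence commute with one another and with left quaternion multiplication, so the polynomial $\mathcal Q_{c,s}(T)=s^2\mathcal I-2sT_0+T\bar T$ commutes with $s$, $T_0$, $\underline T$, and so does its inverse. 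I would then insert $ds_J$ on the left (noting that $\mathcal Q_{c,s}(T)^{-1}$ commutes with the $\mathbb C_J$-valued form $ds_J$, and that $s\,ds_J=ds_J\,s$ for $s\in\mathbb C_J$) and integrate over $\partial(G_2\cap\mathbb C_J)$. By Lemma \ref{harmo}, applicable because $\partial G_2\cap\sigma_S(T)=\emptyset$, the integrals $\int ds_J\,\mathcal Q_{c,s}(T)^{-1}=0$ and $\int ds_J\,\mathcal Q_{c,s}(T)^{-1}\underline T=\bigl(\int ds_J\,\mathcal Q_{c,s}(T)^{-1}\bigr)\underline T=0$ both vanish, yielding
$$2\pi\tilde P=\int_{\partial(G_2\cap\mathbb C_J)} s\,ds_J\,\mathcal Q_{c,s}(T)^{-1}=\int_{\partial(G_2\cap\mathbb C_J)} ds_J\,S^{-1}_R(s,T)=:2\pi\hat P_1,$$
where $\hat P_1$ is the right $S$-functional-calculus Riesz projector for $\sigma_1$.

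Once the identification $\tilde P=\hat P_1$ is in hand, the conclusion is immediate: $\hat P_1$ arises by applying the $S$-functional calculus to the intrinsic locally-constant function equal to $1$ near $\sigma_1$ and $0$ near $\sigma_2$, and multiplicativity of the $S$-functional calculus on intrinsic functions (see, e.g., \cite[Chapter~3]{CGKBOOK}) yields $\hat P_1^2=\hat P_1$; hence $\tilde P^2=\hat P_1^2=\hat P_1=\tilde P$. I expect the main obstacle to be the presence of the extra $\mathcal Q_{c,s}(T)^{-1}\underline T$ summand on the right-hand side of the rearrangement (a genuine quaternionic feature absent in the Riesz--Dunford theory), which one has to push through noncommuting factors and ultimately annihilate via Lemma \ref{harmo}; the hypothesis $T_3=0$ built into Theorem \ref{rp} is precisely what makes that lemma available. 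A parallel computation starting from the alternative form $\tilde P=\frac{1}{2\pi}\int \mathcal Q_{c,p}(T)^{-1}\,dp_J\,p$ and using $S^{-1}_L(p,T)=(p\mathcal I-\bar T)\mathcal Q_{c,p}(T)^{-1}$ identifies $\tilde P$ with the left $S$-calculus Riesz projector as well, providing a symmetric confirmation of the conclusion.
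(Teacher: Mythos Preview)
Your argument is correct and reaches the conclusion by a genuinely different route than the paper. The paper proves Theorem \ref{rp} by invoking the new $Q$-resolvent equation \eqref{qres}: it substitutes $S^{-1}_R(s,T)=s\mathcal{Q}_{c,s}(T)^{-1}-\mathcal{Q}_{c,s}(T)^{-1}\bar T$ into \eqref{qres}, multiplies by $p$, performs the double integration over $\partial(G_2\cap\mathbb C_J)$ and $\partial(G_1\cap\mathbb C_J)$, kills the extraneous terms via Lemma \ref{harmo} and the Cauchy theorem, and finally applies Lemma \ref{app} with $B=\mathcal{Q}_{c,p}(T)^{-1}$ to recover $\tilde P$ on the right-hand side. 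In other words, the paper mimics the classical Riesz--Dunford mechanism directly at the level of the pseudo $S$-resolvent, and the whole point of placing Theorem \ref{rp} in Section 6 is to exhibit \eqref{qres} as a resolvent equation that is strong enough to yield the projector property on its own.

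Your approach bypasses \eqref{qres} entirely: the single-variable identity $s\mathcal Q_{c,s}(T)^{-1}=S^{-1}_R(s,T)+T_0\mathcal Q_{c,s}(T)^{-1}-\mathcal Q_{c,s}(T)^{-1}\underline T$, together with Lemma \ref{harmo}, collapses $\tilde P$ to the $S$-functional-calculus projector $\hat P_1=\frac{1}{2\pi}\int ds_J\,S^{-1}_R(s,T)$, whose idempotence is already established in \cite{CGKBOOK}. This is shorter and makes transparent that $\tilde P$ coincides with the standard $S$-spectrum Riesz projector (a fact the paper derives only implicitly). What you lose is the demonstration that the new resolvent equation \eqref{qres} is itself adequate for the projector theory, which is the thematic payoff the authors are after. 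Your commutation claims are all justified: $\mathcal Q_{c,s}(T)=s^2\mathcal I-2sT_0+T\bar T$ has real-operator coefficients in the left-multiplication variable $s$, so it commutes with $L_s$, $L_{ds_J}$, $T_0$ and $\underline T$; and Lemma \ref{harmo} applies on $G_2$ because the hypotheses force $\partial G_2\cap\sigma_S(T)=\emptyset$ and $T_3=0$.
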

\begin{proof}
From the definition of right $S$-resolvent operator we have
\begin{equation}
\label{auxh}
S^{-1}_R(s,T)= s\mathcal{Q}_{c,s}(T)^{-1}- \mathcal{Q}_{c,s}(T)^{-1}\bar{T}.
\end{equation}
By inserting formula \eqref{auxh} in the equation \eqref{qres} and by multiplying on the right by $p$ we get
\begin{eqnarray}
\label{auxh1}
&& \mathcal{Q}_{c,s}(T)^{-1} S^{-1}_L(p,T)p+s\mathcal{Q}_{c,s}(T)^{-1} \mathcal{Q}_{c,p}(T)^{-1}p- \mathcal{Q}_{c,s}(T)^{-1}\bar{T} \mathcal{Q}_{c,p}(T)^{-1}p\\
\nonumber
&&-2 \mathcal{Q}_{c,s}(T)^{-1} \underline{T} \mathcal{Q}_{c,p}(T)^{-1}p= [(\mathcal{Q}_{c,s}(T)^{-1}- \mathcal{Q}_{c,p}(T)^{-1})p- \bar{s}(\mathcal{Q}_{c,s}(T)^{-1}- \mathcal{Q}_{c,p}(T)^{-1})] \mathcal{Q}_s(p)^{-1}p.
\end{eqnarray}
Now, we multiply formula \eqref{auxh1} by $ds_J$ and we integrate it on $\partial(G_2 \cap \mathbb{C}_J)$ with respect to $ds_J$ and if we multiply on the right by $dp_J$ and we integrate on $ \partial(G \cap \mathbb{C}_J)$ with respect to $dp_J$. Therefore, we get
\begin{eqnarray*}
&& \! \! \! \! \! \! \! \! \! \! \int_{\partial(G_2 \cap \mathbb{C}_J)} ds_J\mathcal{Q}_{c,s}(T)^{-1} \int_{\partial(G_1 \cap \mathbb{C}_J)} S^{-1}_L(p,T)p+ \int_{\partial(G_2 \cap \mathbb{C}_J)} s ds_J\mathcal{Q}_{c,s}(T)^{-1} \int_{\partial(G_1 \cap \mathbb{C}_J)} \mathcal{Q}_{c,p}(T)^{-1}dp_Jp\\
&&\! \! \! \! \! \! \! \! \! \! -2 \int_{\partial(G_2 \cap \mathbb{C}_J)} ds_J \mathcal{Q}_{c,s}(T)^{-1} \underline{T} \int_{\partial(G_1 \cap \mathbb{C}_J)} \mathcal{Q}_{c,p}(T)^{-1}dp_Jp=  \\
&& \! \! \! \! \! \! \! \! \! \! \int_{\partial(G_2 \cap \mathbb{C}_J)} ds_J\int_{\partial(G_1 \cap \mathbb{C}_J)}[ (\mathcal{Q}_{c,s}(T)^{-1}-\mathcal{Q}_{c,p}(T)^{-1})p- \bar{s}(\mathcal{Q}_{c,s}(T)^{-1}-\mathcal{Q}_{c,p}(T)^{-1})] \mathcal{Q}_s(p)^{-1} dp_Jp.
\end{eqnarray*} 
By Lemma \ref{harmo} we get
$$
(2 \pi)^2 \tilde{P}^2=\int_{\partial(G_2 \cap \mathbb{C}_J)} ds_J\int_{\partial(G_1 \cap \mathbb{C}_J)}[ (\mathcal{Q}_{c,s}(T)^{-1}-\mathcal{Q}_{c,p}(T)^{-1})p- \bar{s}(\mathcal{Q}_{c,s}(T)^{-1}-\mathcal{Q}_{c,p}(T)^{-1})] \mathcal{Q}_s(p)^{-1} dp_Jp.
$$
Now, by \eqref{Cauchy9} we have
$$ \tilde{P}^2=\frac{1}{(2 \pi)^2}\int_{\partial(G_2 \cap \mathbb{C}_J)}\int_{\partial(G_1 \cap \mathbb{C}_J)} ds_J [\bar{s} \mathcal{Q}_{c,p}(T)^{-1}-\mathcal{Q}_{c,p}(T)^{-1}p] \mathcal{Q}_{s}(p)^{-1}  dp_Jp.$$
By exchanging the role of the integrals and Lemma \ref{app} with $B:=\mathcal{Q}_{c,p}(T)^{-1}$ we get
$$
\tilde{P}^2= \frac{1}{2 \pi} \int_{\partial(G_1 \cap \mathbb{C}_J)}  \mathcal{Q}_{c,p}(T)^{-1}dp_Jp= \tilde{P}.
$$
\end{proof}

We finish by making a table that sums up all the Riesz projectors in the $S$-functional, $Q$-functional, the polyanalytic functional  and $F$ functional calculi. We consider the sets $G_1$ as in the hypothesis of Theorem \ref{rp} and for the sake of simplicity we consider only the left case.  
\newline
\begin{center}
	\begin{tabular}{|c|c|c|}
		\hline
		\rule[-2mm]{-5mm}{-6cm}
		&$\displaystyle \hbox{Resolvent operator}$ & $\displaystyle \hbox{Riesz projectors}$  \\
		\hline
		$S$-functional &  $ \displaystyle S^{-1}_L(s,T)$& $\displaystyle \frac{1}{2 \pi}\int_{\partial(G_1 \cap \mathbb{C}_J)} S^{-1}_L(s,T) ds_J$\\
		\hline
		$Q$-functional &$ \displaystyle \mathcal{Q}_{c,s}(T)^{-1}$&$\displaystyle \frac{1}{2 \pi}\int_{\partial(G_1 \cap \mathbb{C}_J)}\mathcal{Q}_{c,s}(T)^{-1} ds_J s$\\
		\hline
		$P_2$-functional &$ \displaystyle \mathcal{P}_L^2(s,T)$& $\displaystyle\frac{1}{8 \pi}\int_{\partial(G_1 \cap \mathbb{C}_J)}\mathcal{P}_L^2(s,T) ds_J s$\\
		\hline
		$F$-functional& $\displaystyle F_L(s,T)$&$\displaystyle- \frac{1}{8 \pi}\int_{\partial(G_1 \cap \mathbb{C}_J)}F_L(s,T) ds_J s^2 $\\
		\hline
	\end{tabular}
\end{center}

From this table it is clear that to have a suitable definition of Riesz projector in the functional calculi based on the $S$-spectrum we have to suitably integrate the respective resolvent operator multiplied by a monomial of a certain degree.

\section{Concluding remarks}
In this paper we show that by applying the conjugate Fueter operator, $\mathcal{\overline{D}}$, to a slice hyperholomorphic function  we get, as a consequence of the Fueter theorem, a polyanalytic function of order 2. To this set of functions we associate a functional calculus. If we consider the Fueter theorem in Clifford algebras in dimension at least five it is possible to study polyanalytic functional calculi of order higher than two.

\hspace{4mm}

\noindent
Antonino De Martino,
Dipartimento di Matematica \\ Politecnico di Milano\\
Via Bonardi n.~9\\
20133 Milano\\
Italy

\noindent
\emph{email address}: antonino.demartino@polimi.it\\

\vspace*{5mm}
\noindent
Stefano Pinton,
Dipartimento di Matematica \\ Politecnico di Milano\\
Via Bonardi n.~9\\
20133 Milano\\
Italy

\noindent
\emph{email address}: stefano.pinton@polimi.it\\


\begin{thebibliography}{99}
\bibitem{A} L.D. Abreu,\emph{ Super-wavelets versus poly-Bergman spaces}, Int. Eq. Op. Theor., \textbf{73} (2012), 177–193.
\bibitem{A1} L.D. Abreu, \emph{Sampling and interpolation in Bargmann-Fock spaces of polyanalytic functions}, Applied and Computational Harmonic Analysis, \textbf{29} (2010), 287–302.
\bibitem{AF} L.D. Abreu , H.G. Feichtinger, \emph{Function spaces of Polyanalytic Functions}, Harmonic and complex analysis and its applications, 1–38, Trends Math., Birkhäuser/Springer, Cham, (2014).
\bibitem{ACGS15} D. Alpay, F. Colombo, J. Gantner, I. Sabadini, \emph{A new resolvent equation for the $S$-functional calculus}, J. Geom. Anal. $25$ $(2015)$, no. $3$, 1939 - 1968. 
\bibitem{ACDS2} D. Alpay, F. Colombo, K. Diki, I. Sabadini, \emph{Poly slice monogenic functions, Cauchy formulas and the PS-functional calculus}, (2020). (arXiv: 2011.13912) (to appear in J.Operator Theory).
\bibitem{polyan1}  D. Alpay, K. Diki, I. Sabadini, {\em On slice polyanalytic functions of a quaternionic variable}. Results Math. \textbf{74} (2019), no. 1, Paper No. 17, 25 pp. 		
\bibitem{polyan2}  D. Alpay, K. Diki, I. Sabadini, {\em Correction to: On slice polyanalytic functions of a quaternionic variable}. Results Math. \textbf{76} (2021), no. 2, Paper No. 84, 4 pp.
\bibitem{ADS} D. Alpay, K. Diki, I. Sabadini, \emph{On the global operator and Fueter mapping theorem for slice polyanalytic functions},  J. Math. Anal. Appl., \textbf{19}(6) (2021), 941-964.
\bibitem{B1} M.B. Balk, \emph{Polyanalytic functions}, Mathematical research, 1991.
\bibitem{B} H. Begeher, \emph{Iterated integral opererators in Clifford analysis}, J. Anal. Appl. \textbf{18}, 361-377 (1999).
\bibitem{B1976} F. Brackx, \emph{On (k)-monogenic functions of a quaternion variable}, Function theoretic methods in differential equations, 22–44. Res. Notes in Math. \textbf{8}, Pitman, London (1976).
\bibitem{DB1978} F. Brackx, R. Delanghe, \emph{Hypercomplex Function Theory and Hilbert Modules with Reproducing Kernel}, proceedings of the London Mathematical Society. s3-37, 545–576. (1978).
\bibitem{CDQS} F. Colombo, A. De Martino, T. Qian, I. Sabadini, \emph{The Poisson kernel and the Fourier transform of the slice monogenic Cauchy kernels}, J. Math. Anal. Appl. \textbf{512}(1) (2022), 23 pp.
\bibitem{CDS} F. Colombo, A. De Martino, I. Sabadini, \emph{The $ \mathcal{F}$-resolvent equation and Riesz projectors for the $ \mathcal{F}$-functional calculus}, (arXiv 2112.04830) (submitted).
\bibitem{CDPS} F. Colombo, A. De Martino, S. Pinton, I. Sabadini, \emph{Axially harmonic functions and the harmonic functional calculus on the $S$-specturm} (arXiv: 2205.08162  ) (submitted)
\bibitem{FIVEDIM} F. Colombo, A. De Martino, S. Pinton, I. Sabadini, \emph{
	The fine structure of the spectral theory on the $S$-spectrum in dimension five}, preprint 2022.
\bibitem{CG} F. Colombo, J. Gantner, \emph{Formulations of the $ \mathcal{F}$- functional calculus and some consequences}, Proceedings of the Royal Society of Edinburgh, \textbf{146 A}  (2016), 509-545.
\bibitem{CGKBOOK} F. Colombo, J. Gantner, D.P. Kimsey, {\em Spectral theory on the $S$-spectrum for quaternionic operators}, Operator Theory: Advances and Applications, 270. Birkh\"auser/Springer, Cham, 2018. ix+356 pp.
\bibitem{CS} F. Colombo, I. Sabadini, {\em The F-functional calculus for unbounded operators}, J. Geom. Phys., {\bf 86} (2014), 392--407.
\bibitem{CS1} F. Colombo, I. Sabadini, {\em The F-spectrum and the SC-functional calculus}, Proc. Roy. Soc. Edinburgh Sect. A, {\bf 142} (2012), no. 3, 479--500.
\bibitem{CSS} F. Colombo, I. Sabadini, F. Sommen, \emph{The Fueter mapping theorem in integral form and the $F$-functional calculus}, Math. Methods Appl. Sci., \textbf{33} $(2010)$, no. $17$, 2050-2066.
\bibitem{CSS3} F. Colombo, I. Sabadini, D.C. Struppa, {\em Noncommutative functional calculus. Theory and applications of slice hyperholomorphic functions},
Progress in Mathematics, 289. Birkh\"auser/Springer Basel AG, Basel, 2011.
\bibitem{JFACSS} F. Colombo, I. Sabadini, D.C. Struppa, {\em A new functional calculus for non commuting operators}, J. Funct. Anal., {\bf 254} (2008), 2255-2274.
\bibitem{CSSS} F. Colombo , I. Sabadini , D.C. Struppa, {\em Michele Sce's Works in Hypercomplex Analysis. A Translation with Commentaries}, Cham: Birkh\"auser (ISBN 978-3-030-50215-7/hbk; 978-3-030-50216-4/ebook). vi, 122 p. (2020).
\bibitem{DMD2} A. De Martino, K. Diki, \emph{ On the polyanalytic short-time Fourier transform in the quaternionic setting }, (to appear in Commun. Pure Appl. Anal.)(arXiv 2110.04520).
\bibitem{DP} A. De Martino, S. Pinton \emph{A polyanalytic functional calculus of order 2 on the $S$-spectrum} (arXiv:) (submitted)
\bibitem{DKS} K. Diki, R.S. Krausshar, I. Sabadini, {\it On the Bargmann-Fock-Fueter and Bergman-Fueter integral transforms}, J. Math. Phys. \textbf{60} (2019), 083506.
\bibitem{T} N. Théodoresco, \emph{La dérivée aréolaire et ses applications à la physique mathématique}, Paris, 1931.
\bibitem{F} R. Fueter, \emph{Die Funktionentheorie der Differentialgleichungen $\Delta u = 0$ und $\Delta\Delta u = 0$ mit vier reellen Variablen},  Comm. Math. Helv., {\bf 7} (1934), 307-330.
\bibitem{K} G.V. Kolossov, \emph{Sur les probléms d'élasticité à deux dimensions}, C. R. Acad. Sci., \textbf{146} (1908), 522–525.
\bibitem{M} N.I. Muskhelishvili, \emph{Some Basic Problems of Mathematical Elasticity Theory}, (in Russian) (Nauka, Moscow, 1968).
\bibitem{J} B. Jefferies, {\em Spectral properties of noncommuting operators},
Lecture Notes in Mathematics, 1843, Springer-Verlag, Berlin, 2004.
\end{thebibliography}
\end{document}